\DeclareMathOperator{\Hom}{Hom}
\DeclareMathOperator{\End}{End}
\DeclareMathOperator{\Mod}{Mod}
\DeclareMathOperator{\mmod}{mod}
\DeclareMathOperator{\rad}{rad}
\DeclareMathOperator{\Ext}{Ext}
\DeclareMathOperator{\perf}{perf}
\DeclareMathOperator{\Tor}{Tor}
\theoremstyle{plain}
\newtheorem{theorem}{Theorem}[section]
\newtheorem*{theorem*}{Theorem}
\theoremstyle{definition}
\newtheorem{defn}[theorem]{Definition}
\newtheorem{remark}[theorem]{Remark}
\newtheorem{lemma}[theorem]{Lemma}
\newtheorem{setup}[theorem]{Setup}
\newtheorem{proposition}[theorem]{Proposition}
\date{}
\begin{document}
\setlength{\parindent}{0pt}
\setlength{\parskip}{7pt}
\author{Francesca Fedele}
\address{Dipartimento di Informatica - Settore di Matematica Universita` degli Studi di Verona
Strada le Grazie 15 - Ca` Vignal, I-37134 Verona, Italy}
\email{francesca.fedele@univr.it}

\keywords{$d$-abelian category, $d$-cluster tilting, $(d+2)$-angulated category, higher homological algebra, homological epimorphism, universal localization, wide subcategory}
\subjclass[2020]{55P60, 18A20, 18G80}
\title[Universal localizations of $d$-homological pairs]{Universal localizations of $d$-homological pairs}

\begin{abstract}
    Let $k$ be an algebraically closed field and $\Phi$ a finite dimensional $k$-algebra. The universal localization $\Phi\rightarrow \Phi_\mathcal{S}$ of  $\Phi$ with respect to a set of morphisms between finitely generated projective $\Phi$-modules $\mathcal{S}$ always exists. Moreover, when $\Phi$ is hereditary, Krause and \v{S}\v{t}ov\'i\v{c}ek proved that the universal localizations of $\Phi$ are in bijective correspondence with various natural structures. Taking inspiration from an alternative definition of universal localizations involving a triangulated subcategory of $\mathcal{D}^{\perf}(\Phi)$, we introduce a higher analogue of universal localizations. That is, fixing a positive integer $d$, we define universal localizations of $d$-homological pairs $(\Phi,\mathcal{F})$ with respect to suitable wide subcategories $\mathcal{U}$ of $\mathcal{D}^b(\mmod\Phi)$. When gldim$\Phi\leq d$, we show that the result by Krause and \v{S}\v{t}ov\'i\v{c}ek has a (partial) higher analogue and that such universal localizations exist with respect to any choice of $\mathcal{U}$ with the required properties. Moreover, we show that in this setup, the base case of our definition and the definition of classic universal localization coincide. 
    \end{abstract}

\maketitle

\section{Introduction}
Let $(\Phi, \mathcal{F})$ be a \textit{$d$-homological pair}, that is $\Phi$ is a finite dimensional $k$-algebra and $\mathcal{F}\subseteq\mmod\Phi$ is a $d$-cluster tilting subcategory for some fixed integer $d\geq 1$. In particular, $\mathcal{F}$ is a $d$-abelian category as defined by Jasso in \cite{GJ}, and plays the role of a ``higher'' $\mmod\Phi$ and if $d=1$, then $\mathcal{F}=\mmod\Phi$ is the only choice.
In this paper, we introduce the definition of universal localizations of  $d$-homological pairs $(\Phi, \mathcal{F})$ and prove that the set of such localizations (up to equivalence) is in bijective correspondence with various natural structures.
We work in the following setup throughout the paper.

\begin{setup}\label{setup}
Let $d$ be a positive integer, $k$ an algebraically closed field and $\Phi$ a finite dimensional $k$-algebra with global dimension at most $d$. The category of finitely generated right $\Phi$-modules is denoted by $\mmod\Phi$ and its bounded derived category by $\mathcal{D}^b(\mmod\Phi)$.
Assume moreover that $\mathcal{F}\subseteq \mmod\Phi$ is a $d$-cluster tilting subcategory in the sense of Definition \ref{defn_dct}, that is $(\Phi, \mathcal{F})$ is a $d$-homological pair.
\end{setup}

\subsection{Classic background ($d=1$ case)}
In the case $d=1$, we have that $\Phi$ is hereditary.

We recall the definition of universal localization of $\Phi$, see \cite[Section 4]{S} or \cite[Section 6]{KS}. Note that this can actually be defined for an associative ring with unit instead of an algebra.

\begin{defn}[{=Definition \ref{defn_classic}}]\label{defn_univ_classic}
A \textit{universal localization of $\Phi$} with respect to a set $\mathcal{S}$ of morphisms between finitely generated projective right $\Phi$-modules is an algebra homomorphism $\Phi\rightarrow \Phi_\mathcal{S}$ initial with respect to the property that every element $\sigma\otimes_\Phi \Phi_\mathcal{S}$ with $\sigma\in\mathcal{S}$ has an inverse.
\end{defn}
Schofield proved in \cite[Theorem 4.1]{S} that a universal localization of $\Phi$ with respect to any such set $\mathcal{S}$ always exists and it is unique up to unique isomorphism. Hence we call it \textit{the} universal localization of $\Phi$ with respect to $\mathcal{S}$.

In \cite[Theorem B]{KS}, Krause and \v{S}\v{t}ov\'i\v{c}ek proved that when $\Phi$ is hereditary, universal localizations of $\Phi$ are in bijective correspondence with various natural structures. We recall their result, which we will then partially generalise to higher cases.

\begin{theorem}[{\cite[Theorem B]{KS}}]\label{thmKS}
For $\Phi$ hereditary, there are bijections between the following sets:
\begin{enumerate}
\item Wide subcategories of $\mmod\Phi$.
\item Wide subcategories of $\mathcal{D}^b(\mmod\Phi)$.
\item Equivalence classes of homological epimorphisms $\Phi\rightarrow\Gamma$.
\item Equivalence classes of universal localizations $\Phi\rightarrow\Gamma$.
\item Wide subcategories of $\Mod\Phi$ that are closed under products and coproducts.
\item Localizing subcategories of $\mathcal{D}(\Mod\Phi)$ that are closed under products.
\item Equivalence classes of localization functors $\mathcal{D}(\Mod\Phi)\rightarrow \mathcal{D}(\Mod\Phi)$ preserving coproducts.
\end{enumerate}
\end{theorem}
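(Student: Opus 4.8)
I outline a proof of this theorem of Krause and \v{S}\v{t}ov\'i\v{c}ek. The plan is to split the statement into a ``finite'' classification for the sets (1)--(4) and an ``infinite'' one for (5)--(7), and to use throughout that a hereditary ring has no higher $\Ext$ or $\Tor$, so that every object of $\mathcal{D}^b(\mmod\Phi)$ and of $\mathcal{D}(\Mod\Phi)$ is \emph{formal}, i.e.\ splits as the direct sum of its shifted cohomologies. The bijection (1)$\leftrightarrow$(2) is then nearly immediate: send a wide subcategory $\mathcal{A}\subseteq\mmod\Phi$ to the full subcategory of complexes with all cohomologies in $\mathcal{A}$, and a wide triangulated subcategory $\mathcal{U}\subseteq\mathcal{D}^b(\mmod\Phi)$ to $\mathcal{U}\cap\mmod\Phi$; formality shows these are mutually inverse, and one checks that the abelian-category axioms on one side translate into the triangulated ``wide'' axioms on the other.

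For (2)$\leftrightarrow$(3) I would attach to a homological epimorphism $f\colon\Phi\to\Gamma$ the thick --- hence, by formality, wide --- subcategory $\mathcal{U}_f=\{X\in\mathcal{D}^b(\mmod\Phi)\mid X\otimes^{L}_\Phi\Gamma=0\}$, equivalently the wide subcategory $\{M\in\mmod\Phi\mid \Tor^\Phi_0(M,\Gamma)=\Tor^\Phi_1(M,\Gamma)=0\}$; conversely, given a wide subcategory I would Bousfield-localize $\mathcal{D}(\Mod\Phi)$ at the localizing subcategory it generates and check that the quotient has a projective generator concentrated in degree $0$ --- using that this localizing subcategory is generated by objects of $\mathcal{D}^b(\mmod\Phi)$ and that $\Phi$ is hereditary --- so that the localization arises from a ring epimorphism $\Phi\to\Gamma$, which is then automatically homological; bijectivity is the statement that $f$ is recovered from $\mathcal{U}_f$ and conversely. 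The passage (3)$\leftrightarrow$(4) has an easy direction --- every universal localization of a hereditary ring is a homological epimorphism, since $\Phi_\mathcal{S}$ is built by inverting maps of finitely generated projectives, whence $\Tor^\Phi_{\geq1}(\Phi_\mathcal{S},\Phi_\mathcal{S})$ vanishes for degree reasons --- and one substantive direction: \emph{every} homological epimorphism from a hereditary ring is, up to equivalence, a universal localization. Here I would argue in the spirit of Neeman--Ranicki, choosing $\mathcal{S}$ to be a generating set of the thick subcategory $\mathcal{U}_f$ of $\mathcal{D}^{\perf}(\Phi)=\mathcal{D}^b(\mmod\Phi)$ realized by honest two-term complexes of finitely generated projectives (available because $\mathrm{gldim}\,\Phi\leq1$) and comparing the two induced recollements; I expect this to be the main obstacle on the finite side.

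On the infinite side, (6)$\leftrightarrow$(7) is the standard dictionary between smashing localizations of the compactly generated category $\mathcal{D}(\Mod\Phi)$ and localizing subcategories closed under products, and (5)$\leftrightarrow$(6) is once more formality, identifying a product- and coproduct-closed wide subcategory of $\Mod\Phi$ with the degree-zero part of such a localizing subcategory. The real work, and the overall main obstacle, is linking the infinite side to the finite side, i.e.\ the \emph{telescope conjecture for hereditary rings}: every smashing subcategory of $\mathcal{D}(\Mod\Phi)$ is generated by compact objects, equivalently every product- and coproduct-closed wide subcategory of $\Mod\Phi$ is the closure under direct limits of a wide subcategory of $\mmod\Phi$. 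I would establish this via $\Ext$-orthogonal pairs: attach to a wide $\mathcal{A}\subseteq\mmod\Phi$ the $\Ext$-orthogonal pair it generates in $\Mod\Phi$, use $\mathrm{gldim}\,\Phi\leq1$ to show the right-hand class is definable and closed under the operations at issue, and deduce that the induced localization of $\mathcal{D}(\Mod\Phi)$ is smashing with compact part recovering $\mathcal{A}$ through the finite correspondence. Assembling these maps and checking that they agree on overlaps then welds the seven sets into a single commuting family of bijections.
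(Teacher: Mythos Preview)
The paper does not prove this theorem at all: it is stated purely as background, with the citation \cite[Theorem B]{KS}, and no argument is given. There is therefore no ``paper's own proof'' to compare your outline against; the present paper only generalises a fragment of this result (the correspondence between (1)--(4)) to the $d$-homological setting in its Theorem~\ref{thm}.

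That said, your outline is a reasonable sketch of the strategy in the original Krause--\v{S}\v{t}ov\'i\v{c}ek paper: formality of the derived category over a hereditary ring is indeed the mechanism behind (1)$\leftrightarrow$(2) and (5)$\leftrightarrow$(6), the smashing/localizing dictionary handles (6)$\leftrightarrow$(7), and the substantive content is the telescope conjecture linking the finite and infinite sides, which they establish via $\Ext$-orthogonal pairs as you indicate.

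One genuine soft spot: in your (3)$\leftrightarrow$(4), the assertion that $\Tor^\Phi_{\geq 1}(\Phi_\mathcal{S},\Phi_\mathcal{S})=0$ ``for degree reasons'' is only half right. Hereditariness kills $\Tor^\Phi_i$ for $i\geq 2$ immediately, but $\Tor^\Phi_1(\Phi_\mathcal{S},\Phi_\mathcal{S})=0$ is not a degree argument; it is a nontrivial theorem (due to Bergman--Dicks/Schofield) that universal localizations of hereditary rings are stably flat. You should cite or prove this rather than wave it through, since without it the map (4)$\to$(3) is not even defined.
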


In the above, a wide subcategory of an abelian category is a full subcategory closed under direct summands, kernels, cokernels and extensions. A wide subcategory of a triangulated category is a full subcategory closed under $\Sigma^{\pm 1}$, direct summands and extensions. Localizing subcategories and localization functors are beyond the interest of this paper, see \cite{KS} for details.


Let $\mathcal{S}$ be a set of maps between finitely generated projective right $\Phi$-modules. Let $\mathcal{D}^{\perf}(\Phi)$ be the homotopy category whose objects are bounded complexes of finitely generated projective $\Phi$-modules, that is \textit{perfect complexes}, and whose morphisms are homotopy equivalence classes of chain maps.
The set $\mathcal{S}$ can be viewed as a set of objects in $\mathcal{D}^{\perf}(\Phi)$, where each element of $\mathcal{S}$ is a complex concentrated in two degrees. Let $\mathcal{R}^c_{\mathcal{S}}$ be the smallest triangulated subcategory of $\mathcal{D}^{\perf}(\Phi)$ containing $\mathcal{S}$ and all the direct summands of any of its objects.

\begin{remark}\label{remark_B}
The above construction can be used to give an alternative definition of the universal localization. That is, the universal localization of $\Phi$ with respect to the set $\mathcal{S}$ is the algebra homomorphism $\Phi\rightarrow \Phi_\mathcal{S}$ initial in the category of algebra homomorphisms $\Phi\rightarrow\Lambda$ with the property that $\mathcal{R}^c_{\mathcal{S}}\otimes^L_\Phi\Lambda=0$, see Remark \ref{remark_alternative_defn} and Definition \ref{defn_alternative}.
\end{remark}



\subsection{This paper ($d\geq 2$ case)}

The $d$-cluster tilting subcategory $\mathcal{F}\subseteq \mmod\Phi$ is a $d$-abelian category by \cite[Theorem 3.16]{GJ} in the sense of Definition \ref{defn_dabelian}. Moreover, by \cite[Theorem 1.21]{I},
\begin{align*}
\overline{\mathcal{F}}:=\text{add}\{ \Sigma^{di}\mathcal{F}\mid i\in\mathbb{Z} \}\subseteq\mathcal{D}^b(\mmod\Phi)
\end{align*}
is a $d$-cluster tilting subcategory. Since $\overline{\mathcal{F}}$ is also closed under $\Sigma^d$, by \cite[Theorem 1]{GKO}, we have that $\overline{\mathcal{F}}$ is a $(d+2)$-angulated category containing a copy of $\mathcal{F}$ in the sense of Definition \ref{defn_angulated}.
Note that $\mathcal{F}$ plays the role of a higher $\mmod \Phi$ and $\overline{\mathcal{F}}$ the one of a higher derived category of $\mathcal{F}$. Moreover, the base case $d=1$ recovers the classic theory as $\mathcal{F}=\mmod\Phi$ is the only option and then $\overline{\mathcal{F}}=\mathcal{D}^b(\mmod\Phi)$.

We generalise the definition of universal localization to the one of universal localization of the $d$-homological pair $(\Phi, \mathcal{F})$. Note that, since we work over categories of finitely generated modules and our algebras always have finite global dimension, we could equivalently write $\mathcal{D}^{\perf}$ instead of $\mathcal{D}^b$ in our arguments. This choice would be more consistent with respect to the $d=1$ case, but we choose to write $\mathcal{D}^b$ instead for consistency with the usual setup of higher homological algebra.

\textbf{Definition (= Definition \ref{defn_higher_univ_loc}).} Let $\mathcal{U}\subseteq\mathcal{D}^b(\mmod \Phi)$ be a wide subcategory satisfying the following two conditions:
\begin{enumerate}
    \item\label{property1} $\mathcal{U}^\perp$ is functorially finite in $\mathcal{D}^b(\mmod \Phi)$,
    \item\label{property2} each object in $\mathcal{U}^\perp$ has its $\overline{\mathcal{F}}$-cover and its $\overline{\mathcal{F}}$-envelope in $\mathcal{U}^\perp\cap\overline{\mathcal{F}}$.
\end{enumerate}
A \textit{universal localization of $(\Phi, \mathcal{F})$ with respect to $\mathcal{U}$} is an algebra morphism $\Phi\xrightarrow{\phi}\Gamma$ initial in the category of algebra morphisms $\Phi\rightarrow\Lambda$ such that $\mathcal{U}\otimes_{\Phi}^{L}\Lambda=0$.

We prove that the above definition is in fact a generalisation and the base case recovers the definition of classic universal localization. Note that for $d=1$ property (\ref{property2}) in the above definition is automatically satisfied since $\overline{\mathcal{F}}=\mathcal{D}^b(\mmod\Phi)$ and so $\mathcal{U}^\perp\cap\overline{\mathcal{F}}=\mathcal{U}^\perp$.

\textbf{Proposition A (=Proposition \ref{remark_basecase_coincides}).}
In the classic case, that is $d=1$ and $\Phi$ is a finite dimensional hereditary $k$-algebra, a universal localization in the sense of Definition \ref{defn_higher_univ_loc} is precisely a universal localization in the classic sense. In particular, the objects of $\mathcal{U}$ are direct sums of shifts of $\Phi$-modules and $\mathcal{U}=\mathcal{R}^c_\mathcal{S}$, where $\mathcal{S}$ is the set of projective resolutions of these modules.

In the classic theory, the universal localization of $\Phi$ with respect to any given set $\mathcal{S}$ exists, even without any restriction on the global dimension of $\Phi$. Here it is not immediately clear whether a universal localization of $(\Phi, \mathcal{F})$ with respect to a given $\mathcal{U}$ satisfying the required properties exists. We show that in our setup it does, see Remark \ref{prop_univ_loc_exists}.

We prove the generalisation of Theorem \ref{thmKS}. In the following, a wide subcategory of $\mathcal{F}$ is an additive subcategory closed under $d$-kernels, $d$-cokernels and $d$-extensions and a wide subcategory of $\overline{\mathcal{F}}$ is an additive subcategory closed under $d$-extensions and $\Sigma^{\pm d}$, see Definitions \ref{defn_wide_dab} and \ref{defn_wide_angulated}. Moreover, a homological epimorphism of $d$-homological pairs $(\Phi, \mathcal{F})\xrightarrow{\phi}(\Gamma, \mathcal{G})$ is a homological epimorphism of $k$-algebras $\phi:\Phi\rightarrow\Gamma$ with the extra property that $\phi_*(\mathcal{G})\subseteq \mathcal{F}$, where $\phi_*$ is the restriction of scalars functor, see Definition \ref{defn_homo_epi_pairs}.

\textbf{Theorem B (= Theorem \ref{thm}).}
{\em
In the situation of Setup \ref{setup}, there are bijections between  the following sets.
\begin{enumerate}[label=(\alph*)]
    \item\label{itema} Functorially finite wide subcategories of $\mathcal{F}$.
    \item\label{itemb} Functorially finite wide subcategories of $\overline{\mathcal{F}}$.
    \item\label{itemc} Equivalence classes of homological epimorphisms of $d$-homological pairs $(\Phi, \mathcal{F})\xrightarrow{\phi}(\Gamma, \mathcal{G})$.
    \item\label{itemd} Equivalence classes of universal localizations $\Phi\xrightarrow{\phi}\Gamma$ of $(\Phi, \mathcal{F})$.
\end{enumerate}
}

Observe that the generalisation is partial and the sets (5), (6) and (7) from Theorem \ref{thmC} do not have a higher analogue. The reason is that currently there is no developed theory of a ``higher'' Mod$\,\Phi$. If such a theory develops in the future, it might be possible to prove a complete generalisation of the theorem. 

The paper is organized as follows. Section \ref{section_wide} recalls the construction of $\mathcal{F}$, $\overline{\mathcal{F}}$ and their wide subcategories. Section \ref{section_abc} proves the bijection between sets \ref{itema}, \ref{itemb} and \ref{itemc} from Theorem B. Section \ref{section_homoepi} studies homological epimorphisms, their associated restriction of scalars functor and recalls the theory of classic universal localizations. Section \ref{section_univ_loc} introduces the definition of universal localizations of $d$-homological pairs $(\Phi,\mathcal{F})$ and studies their existence and their relation to homological epimorphisms of $d$-homological pairs. Section \ref{section_proof} proves Theorem B and finally Section \ref{section_example} illustrates the theorem in an example.

\section{The categories $\mathcal{F}$ and $\overline{\mathcal{F}}$ and their wide subcategories}\label{section_wide}
In this section, we recall some important definitions useful for the study of $\mathcal{F}$, $\overline{\mathcal{F}}$ and their wide subcategories.

We start by recalling the definition of functorially finite subcategories.
\begin{defn}
Let $\mathcal{A}$ be a category and $\mathcal{X}$ a full subcategory of $\mathcal{A}$. An \textit{$\mathcal{X}$-precover} of an object $A$ in $\mathcal{A}$ is a morphism of the form $\xi:X\rightarrow A$ with $X\in\mathcal{X}$ such that every morphism $\xi':X'\rightarrow A$ with $X'\in\mathcal{X}$ factorizes as:
\begin{align*}
    \xymatrix{
    X'\ar[rr]^{\xi'}\ar@{-->}[rd]_{\exists}&& A.\\
    &X\ar[ru]_\xi&}
\end{align*}
An \textit{$\mathcal{X}$-cover} of $A$ is an $\mathcal{X}$-precover $\xi:X\rightarrow A$ which is also a right minimal morphism, that is each morphism $\varphi: X\rightarrow X$ with $\xi\circ\varphi=\xi$ is an isomorphism.

A \textit{strong $\mathcal{X}$-cover} of $A$ is an $\mathcal{X}$-cover $\xi:X\rightarrow A$ with the unique extension property, that is every $\xi':X'\rightarrow A$ with $X'\in\mathcal{X}$ factorizes uniquely as:
\begin{align*}
    \xymatrix{
    X'\ar[rr]^{\xi'}\ar@{-->}[rd]_{\exists !}&& A.\\
    &X\ar[ru]_\xi&}
\end{align*}

A full subcategory $\mathcal{X}$ of $\mathcal{A}$ is said to be \textit{precovering} if every object in $\mathcal{A}$ has an $\mathcal{X}$-precover.

The dual notions of precovers, covers, strong covers and precovering subcategories are \textit{preenvelopes}, \textit{envelopes}, \textit{strong envelopes} and \textit{preenveloping subcategories}. If a subcategory is both precovering and preenveloping, it is called \textit{functorially finite}.
\end{defn}

We now recall the definition of $d$-cluster tilting subcategories.

\begin{defn}[{\cite[Definition 2.2]{I2} and \cite[Definition 3.14]{GJ}}]\label{defn_dct}
Let $\mathcal{A}$ be either $\mmod\Phi$ or $\mathcal{D}^b(\mmod\Phi)$. A \textit{$d$-cluster tilting subcategory $\mathcal{C}$} of $\mathcal{A}$ is a functorially finite full subcategory such that
\begin{align*}
    \mathcal{C}=\{C\in\mathcal{A}\mid \Ext^{1 \dots d-1}_\mathcal{A}(\mathcal{A},C)=0\}=\{C\in\mathcal{A}\mid \Ext^{1 \dots d-1}_\mathcal{A}(C,\mathcal{A})=0\}.
\end{align*}
\end{defn}

As mentioned in the introduction, if there is a $d$-cluster tilting subcategory $\mathcal{F}\subseteq\mmod\Phi$, then $\mathcal{F}$ is a $d$-abelian category and $\overline{\mathcal{F}}$ is a $(d+2)$-angulated category. We recall the definitions of $d$-abelian and $(d+2)$-angulated categories.

\begin{defn}[{\cite[Definition 3.1]{GJ}}]\label{defn_dabelian}
		Let $\mathcal{A}$ be an additive category. A diagram of the form
			$\xymatrix{
				A^0\ar[r]& A^1\ar[r]& A^2\ar[r]&\cdots\ar[r]&A^{d-1}\ar[r]&A^d
			}$
			is a \textit{$d$-kernel} of a morphism $\xymatrix{A^d\ar[r]& A^{d+1}}$ if 
			\begin{align*}
			\xymatrix{
				0\ar[r] & A^0\ar[r]& A^1\ar[r]& A^2\ar[r]&\cdots\ar[r]&A^{d-1}\ar[r]&A^d\ar[r]& A^{d+1}
			}
			\end{align*}
			becomes an exact sequence under $\Hom_{\mathcal{A}}(B,-)$ for each $B$ in $\mathcal{A}$. The notion of \textit{$d$-cokernel} is dual.

A $d$-exact sequence is a diagram of the form
			\begin{align*}
			\xymatrix{
				0\ar[r]&A^0\ar[r]^{\alpha^0}& A^1\ar[r]& A^2\ar[r]&\cdots\ar[r]&A^{d-1}\ar[r]&A^d\ar[r]^{\alpha^d}& A^{d+1}\ar[r]&0,
			}
			\end{align*}
			such that $\xymatrix{A^0\ar[r]^{\alpha^0}& A^1\ar[r]& A^2\ar[r]&\cdots\ar[r]&A^{d-1}\ar[r]&A^d}$ is a $d$-kernel of $\alpha^d$ and 
			
			$\xymatrix{A^1\ar[r]& A^2\ar[r]&\cdots\ar[r]&A^{d-1}\ar[r]&A^d\ar[r]^{\alpha^d}& A^{d+1}}$ is a $d$-cokernel of $\alpha^0$.

A \textit{$d$-abelian category} is an additive category $\mathcal{A}$ which satisfies the following axioms:
		\begin{enumerate}
			\item[(A0)] The category $\mathcal{A}$ has split idempotents.
			\item[(A1)] Each morphism in $\mathcal{A}$ has a $d$-kernel and a $d$-cokernel.
			\item[(A2)] If $\alpha^0:\xymatrix{A^0\ar[r]&A^1}$ is a monomorphism and
			$\xymatrix{
				A^1\ar[r]&A^2\ar[r]&\cdots\ar[r]& A^{d+1}
			}$
			is a $d$-cokernel of $\alpha^0$, then
			\begin{align*}
			\xymatrix{
				0\ar[r]&A^0\ar[r]^{\alpha^0}& A^1\ar[r]& A^2\ar[r]&\cdots\ar[r]&A^{d-1}\ar[r]&A^d\ar[r]& A^{d+1}\ar[r]&0,
			}
			\end{align*}
			is a $d$-exact sequence.
			\item[(A2$^{\text{op}}$)] If $\alpha^d:\xymatrix{A^d\ar[r]&A^{d+1}}$ is an epimorphism and
			$\xymatrix{
				A^0\ar[r]&\cdots\ar[r]& A^{d-1}\ar[r]& A^{d}
			}$
			is a $d$-kernel of $\alpha^d$, then
			\begin{align*}
			\xymatrix{
				0\ar[r]&A^0\ar[r]& A^1\ar[r]& A^2\ar[r]&\cdots\ar[r]&A^{d-1}\ar[r]&A^d\ar[r]^{\alpha^d}& A^{d+1}\ar[r]&0,
			}
			\end{align*}
			is a $d$-exact sequence.
		\end{enumerate}
	\end{defn}

	\begin{defn}[{\cite[Definition\ 1.1]{GKO}}]\label{defn_angulated}
		Let $\mathcal{M}$ be an additive category and let $\Sigma^d$ be an automorphism of $\mathcal{M}$ with inverse $\Sigma^{-d}$. A \textit{$\Sigma^d$-sequence} is a sequence of morphisms in $\mathcal{M}$ of the form
		\begin{align}\tag{$\star$}\label{diagram_angle}
		\xymatrix {
			\epsilon : & X^0\ar[r]^{\xi^0} & X^1\ar[r]^{\xi^1}& X^2\ar[r] &\cdots\ar[r] & X^d\ar[r]^{\xi^d} & X^{d+1}\ar[r]^{\xi^{d+1}} &\Sigma^d X^0.
		}
		\end{align}
		A \textit{morphism of $\Sigma^d$-sequences} is given by a sequence of morphisms $\varphi=(\varphi^0,\dots,\,\varphi^{d+1})$ such that the following diagram commutes:
		\begin{align*}
		\xymatrix {
			\epsilon:\ar[d]^\varphi &X^0\ar[r]^{\xi^0}\ar[d]^{\varphi^0} & X^1\ar[r]^{\xi^1}\ar[d]^{\varphi^1}& X^2\ar[r]\ar[d]^{\varphi^2} &\cdots\ar[r] & X^d\ar[r]^{\xi^d}\ar[d]^{\varphi^d} & X^{d+1}\ar[r]^{\xi^{d+1}}\ar[d]^{\varphi^{d+1}} &\Sigma^d X^0\ar[d]^{\Sigma^d \varphi^0}\\
			\epsilon': &Y^0\ar[r]_{\eta^0} & Y^1\ar[r]_{\eta^1}& Y^2\ar[r] &\cdots\ar[r] & Y^d\ar[r]_{\eta^d} & Y^{d+1}\ar[r]_{\eta^{d+1}} &\Sigma^d Y^0.
		}
		\end{align*}
		A $(d+2)$\textit{-angulated category} is a triple $(\mathcal{M},\Sigma^d,\pentagon)$, where $\mathcal{M}$, $\Sigma^d$ are as above and $\pentagon$ is a collection of $\Sigma^d$-sequences, called \textit{$(d+2)$-angles}, satisfying the following axioms.
		\begin{enumerate}
			\item[(N1)] The collection $\pentagon$ is closed under sums and summands and, for every $X\in\mathcal{M}$, the \textit{trivial $\Sigma^d$-sequence}
			\begin{align*}
			\xymatrix {
				\epsilon : & X\ar[r]^{1_{X}} & X\ar[r]& 0\ar[r] &\cdots\ar[r] & 0\ar[r] & 0\ar[r] &\Sigma^d X}
			\end{align*}
			is in $\pentagon$.
			For each morphism $\xi^0:X^0\rightarrow X^1$ in $\mathcal{M}$, there is a $(d+2)$-angle in $\pentagon$ of the form (\ref{diagram_angle}).
			
			\item[(N2)] A $\Sigma^d$-sequence (\ref{diagram_angle}) is in $\pentagon$ if and only if so is its \textit{left rotation}:
			\begin{align*}
			\xymatrix {
				X^1\ar[r]^{\xi^1}& X^2\ar[r] &\cdots\ar[r] & X^d\ar[r]^{\xi^d} & X^{d+1}\ar[r]^{\xi^{d+1}} &\Sigma^d X^0\ar[rr]^{(-1)^d\Sigma^d(\xi^0)} && \Sigma^d X^1.
			}
			\end{align*}
			
			\item[(N3)] Each commutative diagram of solid arrows, with rows in $\pentagon$
			\begin{align*}
			\xymatrix {
				X^0\ar[r]^{\xi^0}\ar[d]^{\varphi^0} & X^1\ar[r]^{\xi^1}\ar[d]^{\varphi^1}& X^2\ar[r]\ar@{-->}[d]^{\varphi^2} &\cdots\ar[r] & X^d\ar[r]^{\xi^d}\ar@{-->}[d]^{\varphi^d} & X^{d+1}\ar[r]^{\xi^{d+1}}\ar@{-->}[d]^{\varphi^{d+1}} &\Sigma^d X^0\ar[d]^{\Sigma^d \varphi^0}\\
				Y^0\ar[r]_{\eta^0} & Y^1\ar[r]_{\eta^1}& Y^2\ar[r] &\cdots\ar[r] & Y^d\ar[r]_{\eta^d} & Y^{d+1}\ar[r]_{\eta^{d+1}} &\Sigma^d Y^0,
			}
			\end{align*}
			can be completed as indicated to a morphism of $\Sigma^d$-sequences.
			
			\item[(N4)] In the situation of (N3), the morphisms $\varphi^2,\dots,\,\varphi^{d+1}$ can be chosen such that
			\begin{align*}
			\xymatrix{
				X^1\oplus Y^0\ar[rr]^{\begin{psmallmatrix} -\xi^1 &0\\ \varphi^1 &\eta^0 \end{psmallmatrix}} &&X^2\oplus Y^1\ar[r]& \cdots\ar[r]& \Sigma^d X^0\oplus Y^{d+1}\ar[rr]^{\begin{psmallmatrix} -\Sigma^d\xi^0 &0\\ \Sigma\varphi^0 &\eta^{d+1} \end{psmallmatrix}}&& \Sigma^d X^1 \oplus \Sigma^d Y^0
			}
			\end{align*}
			belongs to $\pentagon$.
		\end{enumerate}
	\end{defn}

We recall the definition of additive subcategories of additive categories and in particular wide subcategories of $d$-abelian and $(d+2)$-angulated categories.

\begin{defn}
Let $\mathcal{A}$ be an additive category. An \textit{additive subcategory} of $\mathcal{A}$ is a full subcategory which is closed under direct sums, direct summands and isomorphisms in $\mathcal{A}$.
\end{defn}

\begin{defn}[{\cite[Definition 2.11]{HJV}}]\label{defn_wide_dab}
Let $\mathcal{A}$ be a $d$-abelian category. An additive subcategory $\mathcal{W}$ of $\mathcal{A}$ is called \textit{wide} if it satisfies the following conditions.
\begin{enumerate}[label=(\alph*)]
    \item Each morphism in $\mathcal{W}$ has a $d$-kernel in $\mathcal{A}$ consisting of objects in $\mathcal{W}$.
    \item Each morphism in $\mathcal{W}$ has a $d$-cokernel in $\mathcal{A}$ consisting of objects in $\mathcal{W}$.
    \item Each $d$-exact sequence in $\mathcal{A}$ of the form
    \begin{align*}
        0\rightarrow w'\rightarrow a_d\rightarrow\cdots\rightarrow a_1\rightarrow w\rightarrow 0
    \end{align*}
    with $w,w'\in\mathcal{W}$ is Yoneda equivalent in the sense of \cite[Chapter IV.9]{HS} to a $d$-exact sequence in $\mathcal{A}$ of the form
    \begin{align*}
        0\rightarrow w'\rightarrow w_d\rightarrow\cdots\rightarrow w_1\rightarrow w\rightarrow 0,
    \end{align*}
    with $w_i\in\mathcal{W}$ for each $i$.
\end{enumerate}
\end{defn}

\begin{defn}\label{defn_wide_angulated}
Let $\mathcal{A}$ be a $(d+2)$-angulated category. An additive subcategory $\mathcal{W}$ of $\mathcal{A}$ is called \textit{wide} if it satisfies the following conditions.
\begin{enumerate}[label=(\alph*)]
    \item For any morphism in $\mathcal{A}$ of the form $\delta: w\rightarrow \Sigma^d w'$, with $w,\,w'\in\mathcal{W}$ there is a $(d+2)$-angle in $\mathcal{A}$ of the form
    \begin{align*}
        w'\rightarrow w_d\rightarrow\cdots\rightarrow w_1\rightarrow w\xrightarrow{\delta}\Sigma^d w',
    \end{align*}
with $w_i\in\mathcal{W}$ for each $i$.
\item $\Sigma^d(\mathcal{W})\subseteq \mathcal{W}$ and $\Sigma^{-d}(\mathcal{W})\subseteq \mathcal{W}$.
\end{enumerate}
\end{defn}

\section{The bijective correspondences between three of the four sets}\label{section_abc}
In this section, we prove the existence of the bijections between sets \ref{itema}, \ref{itemb} and \ref{itemc} in Theorem \ref{thm}.

The following proposition is a restatement of a previous result and proves the bijection between \ref{itema} and \ref{itemb} in Theorem \ref{thm}.

\begin{proposition}[{\cite[Theorem A]{FF}}]\label{lemma_ab}
There is a bijection
\begin{align*}
    \Bigg\{
    \begin{matrix} \text{functorially finite}\\ \text{wide subcategories} \\ \text{of } \mathcal{F} \end{matrix}
    \Bigg\}
    \rightarrow
    \Bigg\{
    \begin{matrix} \text{functorially finite}\\ \text{wide subcategories} \\ \text{of } \overline{\mathcal{F}} \end{matrix}
    \Bigg\}
    \end{align*}
    sending a functorially finite wide subcategory $\mathcal{W}$ of $\mathcal{F}$ to $\overline{\mathcal{W}}:=\text{add}\{\Sigma^{id}\mathcal{W}\mid i\in\mathbb{Z}\}$.
\end{proposition}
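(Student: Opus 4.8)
The claim is that $\mathcal{W}\mapsto\overline{\mathcal{W}}=\operatorname{add}\{\Sigma^{id}\mathcal{W}\mid i\in\mathbb{Z}\}$ is a bijection between functorially finite wide subcategories of $\mathcal{F}$ and functorially finite wide subcategories of $\overline{\mathcal{F}}$. Since the proposition is attributed to \cite[Theorem A]{FF}, the real task here is to explain why the statement of that earlier result applies verbatim in the present Setup~\ref{setup}, and to recall the shape of the argument. The plan is therefore to (i) check that $\overline{\mathcal{W}}$ is a well-defined additive subcategory of $\overline{\mathcal{F}}$ for any wide $\mathcal{W}\subseteq\mathcal{F}$, (ii) verify it is wide in the $(d+2)$-angulated sense of Definition~\ref{defn_wide_angulated}, (iii) verify it is functorially finite in $\overline{\mathcal{F}}$, and (iv) exhibit the inverse map and check the two composites are the identity.

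For (i) and (ii): closure of $\overline{\mathcal{W}}$ under $\Sigma^{\pm d}$ is immediate from the definition, giving condition (b) of Definition~\ref{defn_wide_angulated}. For condition (a), given $\delta\colon w\to\Sigma^d w'$ with $w,w'\in\overline{\mathcal{W}}$, after applying a power of $\Sigma^d$ we may assume $w,w'\in\mathcal{W}$; then the key input is that $(d+2)$-angles in $\overline{\mathcal{F}}$ with all terms in $\overline{\mathcal{F}}$ restrict, up to the shift $\Sigma^{di}$, to $d$-exact sequences in $\mathcal{F}$ (this is exactly how the $(d+2)$-angulated structure on $\overline{\mathcal{F}}$ is built from the $d$-abelian structure on $\mathcal{F}$, via \cite{I} and \cite{GKO}). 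So a $(d+2)$-angle completing $\delta$ comes from a $d$-exact sequence $0\to w'\to a_d\to\cdots\to a_1\to w\to 0$ in $\mathcal{F}$, which by wideness of $\mathcal{W}$ (Definition~\ref{defn_wide_dab}(c), together with (a) and (b)) is Yoneda equivalent to one with all middle terms in $\mathcal{W}$; translating back gives the required $(d+2)$-angle with terms in $\overline{\mathcal{W}}$. For (iii): an $\mathcal{F}$-precover/preenvelope of $X\in\mathcal{F}$ by objects of $\mathcal{W}$ yields, degree by degree, an $\overline{\mathcal{W}}$-precover/preenvelope of each $\Sigma^{di}X$, and every object of $\overline{\mathcal{F}}$ is a finite direct sum of such shifts; functorial finiteness is additive in this sense, so $\overline{\mathcal{W}}$ is functorially finite in $\overline{\mathcal{F}}$.

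For (iv): the inverse sends a functorially finite wide $\mathcal{V}\subseteq\overline{\mathcal{F}}$ to $\mathcal{V}\cap\mathcal{F}$ (using the fixed copy of $\mathcal{F}$ inside $\overline{\mathcal{F}}$ concentrated in degree $0$). One checks $\mathcal{V}\cap\mathcal{F}$ is a functorially finite wide subcategory of $\mathcal{F}$: closure under $d$-kernels and $d$-cokernels follows from closure of $\mathcal{V}$ under $(d+2)$-angles together with the fact that $d$-exact sequences in $\mathcal{F}$ sit inside $(d+2)$-angles in $\overline{\mathcal{F}}$ whose terms lie in degree $0$; the $d$-extension condition and functorial finiteness are handled similarly. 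That $\overline{\mathcal{W}}\cap\mathcal{F}=\mathcal{W}$ is clear since the degree-$0$ part of $\operatorname{add}\{\Sigma^{id}\mathcal{W}\}$ is exactly $\mathcal{W}$ (the shifts $\Sigma^{di}\mathcal{W}$ for $i\neq 0$ meet $\mathcal{F}$ only in $0$, as $\Sigma^{di}\mathcal{F}\cap\mathcal{F}=0$ for $i\neq0$ inside $\mathcal{D}^b(\operatorname{mod}\Phi)$). Conversely $\overline{\mathcal{V}\cap\mathcal{F}}=\mathcal{V}$ because $\mathcal{V}$, being closed under $\Sigma^{\pm d}$ and a subcategory of $\overline{\mathcal{F}}=\operatorname{add}\{\Sigma^{di}\mathcal{F}\}$, is generated under shifts by its degree-$0$ part $\mathcal{V}\cap\mathcal{F}$.

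I expect the main obstacle to be step (ii), specifically matching the Yoneda-equivalence formulation of wideness in the $d$-abelian category $\mathcal{F}$ (Definition~\ref{defn_wide_dab}(c)) with the bare "$(d+2)$-angle with terms in $\mathcal{W}$" formulation in $\overline{\mathcal{F}}$ (Definition~\ref{defn_wide_angulated}(a)): one must argue that the passage $\mathcal{F}\rightsquigarrow\overline{\mathcal{F}}$ sends Yoneda-equivalent $d$-exact sequences to isomorphic $(d+2)$-angles and, conversely, that any $(d+2)$-angle on objects of $\mathcal{F}$ arises from a $d$-exact sequence in $\mathcal{F}$ unique up to Yoneda equivalence. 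This compatibility is precisely the content isolated in \cite{FF}, so in the write-up I would cite it and keep the verification of Setup-compatibility brief rather than reproducing the full argument.
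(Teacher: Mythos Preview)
The paper itself offers no proof of this proposition beyond the citation: it is stated as ``a restatement of a previous result'' and attributed directly to \cite[Theorem~A]{FF}. Your final paragraph arrives at the same conclusion---cite \cite{FF} and move on---so your approach matches the paper's exactly.

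Your intermediate sketch of the argument is a reasonable outline of what the cited result contains, but be aware of one soft spot if you ever do write it out: in step~(ii) the reduction ``after applying a power of $\Sigma^d$ we may assume $w,w'\in\mathcal{W}$'' is not valid as stated, since an object of $\overline{\mathcal{W}}$ is in general a direct sum of shifts $\Sigma^{di}w_i$ sitting in \emph{different} degrees~$i$, and a morphism $w\to\Sigma^d w'$ can have nonzero components mixing degrees (both $\operatorname{Hom}$- and $\operatorname{Ext}^d$-type). Similarly, in step~(iv) the passage from wideness of $\mathcal{V}$ in $\overline{\mathcal{F}}$ (which only controls completions of morphisms into $\Sigma^d$-shifts) to closure of $\mathcal{V}\cap\mathcal{F}$ under $d$-kernels and $d$-cokernels of arbitrary morphisms needs a rotation argument and a check that the resulting terms land back in degree~$0$. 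These are exactly the technical points handled in \cite{FF}, so your instinct to cite rather than reprove is the right one.
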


In order to study \ref{itemc} in Theorem \ref{thm}, we first recall the restriction of scalars functor and the definitions of some special epimorphisms  of $k$-algebras.

\begin{remark}
Let $\phi:\Phi\rightarrow \Gamma$ be an algebra homomorphism. The restriction of scalars functor
\begin{align*}
    \phi_*(-)\cong\Hom_\Gamma(\Gamma, -)\cong -\otimes_\Gamma \Gamma: \mmod\Gamma\rightarrow \mmod\Phi
\end{align*}
is a faithful, exact functor. We get the diagram
\begin{align*}
    \xymatrix @C=8em{
    \mmod\Gamma \ar[r]^{\phi_*(-)}& \mmod \Phi, \ar@/^2pc/[l]^{\phi^!(-)} \ar@/_2pc/[l]_{\phi^*(-)}
    }
\end{align*}
where $\phi^*( - )=-\otimes_\Phi \Gamma$, $\phi^!(-)=\Hom_\Phi(\Gamma,-)$ and each functor is the left adjoint functor of the functor drawn below it.
\end{remark}

\begin{defn}\label{defn_homo_epi_pairs}
An \textit{epimorphism of $k$-algebras} $\Phi\xrightarrow{\phi} \Gamma$ is an algebra homomorphism which is an epimorphism in the category of rings. Equivalently, it is an algebra homomorphism such that the multiplication map $\Gamma\otimes_{\Phi} \Gamma\rightarrow \Gamma$ is an isomorphism, see \cite[Proposition 1.1]{SL}.

A \textit{homological epimorphism of $k$-algebras} $\Phi\xrightarrow{\phi} \Gamma$ is an algebra homomorphism such that the multiplication map $\Gamma\otimes_{\Phi} \Gamma\rightarrow \Gamma$ is an isomorphism and Tor$^\Phi_i(\Gamma, \Gamma)=0$ for all $i>0$, see \cite[Definition 4.5]{GL}. In particular, note that it is an epimorphism of $k$-algebras.

An \textit{epimorphism of $d$-homological pairs} $(\Phi,\mathcal{F})\xrightarrow{\phi}(\Gamma, \mathcal{G})$ is an epimorphism of $k$-algebras $\Phi\xrightarrow{\phi} \Gamma$ such that $\phi_*(\mathcal{G})\subseteq \mathcal{F}$. We say that $\phi$ is \textit{$d$-pseudoflat} if moreover Tor$^\Phi_d(\Gamma,\Gamma)=0$.

Similarly, a \textit{homological epimorphism of $d$-homological pairs} $(\Phi,\mathcal{F})\xrightarrow{\phi}(\Gamma, \mathcal{G})$ is a homological epimorphism of $k$-algebras $\Phi\xrightarrow{\phi} \Gamma$ such that $\phi_*(\mathcal{G})\subseteq \mathcal{F}$.
\end{defn}

We now prove the bijection between \ref{itema} and \ref{itemc} in Theorem \ref{thm}.

\begin{proposition}\label{lemma_ac}
There is a bijection 
\begin{align*}
   \Bigg\{
    \begin{matrix} \text{equivalence classes of homological}\\ \text{ epimorphisms of $d$-homological pairs} \\ (\Phi, \mathcal{F})\xrightarrow{\phi}(\Gamma, \mathcal{G}) \end{matrix}
    \Bigg\} 
    \rightarrow
    \Bigg\{
    \begin{matrix} \text{functorially finite}\\ \text{wide subcategories} \\ \text{of } \mathcal{F} \end{matrix}
    \Bigg\}
    \end{align*}
sending the equivalence class of 
 $(\Phi, \mathcal{F})\xrightarrow{\phi}(\Gamma, \mathcal{G})$ to $\phi_*(\mathcal{G})$.
\end{proposition}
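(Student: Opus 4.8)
The plan is to construct the inverse map explicitly and check that the two composites are the identity. Given a functorially finite wide subcategory $\mathcal{W}$ of $\mathcal{F}$, Proposition \ref{lemma_ab} produces a functorially finite wide subcategory $\overline{\mathcal{W}}$ of $\overline{\mathcal{F}}$; viewing $\overline{\mathcal{W}}$ inside $\mathcal{D}^b(\mmod\Phi)$ and closing under extensions and shifts, we obtain a wide subcategory $\mathcal{U}_\mathcal{W}$ of the triangulated category $\mathcal{D}^b(\mmod\Phi)$. Its perpendicular $\mathcal{U}_\mathcal{W}^\perp$ is a thick subcategory, and because $\mathcal{W}$ is functorially finite the localization quotient $\mathcal{D}^b(\mmod\Phi)/\mathcal{U}_\mathcal{W}$ is itself equivalent to $\mathcal{D}^b(\mmod\Gamma)$ for a suitable finite dimensional algebra $\Gamma$, with the quotient functor induced by a homological epimorphism $\phi:\Phi\to\Gamma$. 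I would then set $\mathcal{G}:=\mathcal{F}\cap\phi_*^{-1}(\text{essential image of }\phi^*(\mathcal{W}))$ — more precisely, transport $\mathcal{W}$ across the equivalence to a $d$-cluster tilting subcategory $\mathcal{G}\subseteq\mmod\Gamma$ — and check $\phi_*(\mathcal{G})=\mathcal{W}\subseteq\mathcal{F}$, so that $(\Phi,\mathcal{F})\xrightarrow{\phi}(\Gamma,\mathcal{G})$ is a homological epimorphism of $d$-homological pairs. This defines the candidate inverse $\mathcal{W}\mapsto [\phi]$.

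For the forward direction, starting from a homological epimorphism $(\Phi,\mathcal{F})\xrightarrow{\phi}(\Gamma,\mathcal{G})$, I first need that $\phi_*(\mathcal{G})$ really is a functorially finite wide subcategory of $\mathcal{F}$. Fullness and faithfulness of $\phi_*$ (from the Remark on restriction of scalars) together with the fact that $\phi$ is a homological epimorphism give that $\phi_*$ identifies $\mmod\Gamma$ with a wide subcategory of $\mmod\Phi$ in the classical sense; intersecting with $\mathcal{F}$ and using $\phi_*(\mathcal{G})\subseteq\mathcal{F}$, I would verify the three axioms of Definition \ref{defn_wide_dab}: $d$-kernels and $d$-cokernels of maps in $\mathcal{G}$ computed in $\mmod\Gamma$ are sent by $\phi_*$ to the corresponding $d$-kernels/$d$-cokernels in $\mmod\Phi$ (using that $\phi_*$ is exact, hence preserves the $\Hom$-exactness characterising $d$-(co)kernels), and these lie in $\phi_*(\mathcal{G})\subseteq\mathcal{F}$; the Yoneda-equivalence axiom (c) follows because any $d$-exact sequence in $\mathcal{F}$ with end terms in $\phi_*(\mathcal{G})$ can be pushed to $\mmod\Gamma$ via $\phi^*$ (or via the derived functor) and $\mathcal{G}$ being $d$-cluster tilting controls the middle terms. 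Functorial finiteness of $\phi_*(\mathcal{G})$ in $\mathcal{F}$ comes from functorial finiteness of $\mathcal{G}$ in $\mmod\Gamma$ combined with functorial finiteness of $\mathcal{F}$ in $\mmod\Phi$ and of $\mmod\Gamma$ (as $\phi_*\mmod\Gamma$) in $\mmod\Phi$, assembling (pre)covers and (pre)envelopes stepwise.

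Finally I would check the two round trips. Starting from $[\phi:(\Phi,\mathcal{F})\to(\Gamma,\mathcal{G})]$, forming $\mathcal{W}=\phi_*(\mathcal{G})$ and then the associated homological epimorphism recovers $\phi$ up to equivalence because the subcategory $\mathcal{U}_\mathcal{W}$ of $\mathcal{D}^b(\mmod\Phi)$ it produces is exactly the kernel of $-\otimes^L_\Phi\Gamma$, and a homological epimorphism is determined up to equivalence by that kernel; conversely, starting from $\mathcal{W}$, passing to $(\Gamma,\mathcal{G})$ and applying $\phi_*$ returns $\mathcal{W}$ by construction. The main obstacle I anticipate is the bookkeeping around the $d$-cluster tilting subcategory $\mathcal{G}\subseteq\mmod\Gamma$: one must show such a $\mathcal{G}$ exists and is functorially finite, i.e. that $\mathcal{F}$ descends along the homological epimorphism to a genuine $d$-homological structure on $\Gamma$, and that this descent is inverse to restriction of scalars. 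This is where the hypotheses that $\gldim\Phi\le d$ and that $\mathcal{F}$ (equivalently $\overline{\mathcal{F}}$) is functorially finite are essential, and it is the step requiring the most care; the verification of the wide-subcategory axioms for $\phi_*(\mathcal{G})$, by contrast, is largely a diagram chase using exactness and faithfulness of $\phi_*$.
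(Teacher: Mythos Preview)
Your approach differs substantially from the paper's and contains a direction error that breaks the construction.

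The paper's proof is a two-line reduction: by \cite[Theorem~A]{HJV} there is already a bijection between equivalence classes of \emph{$d$-pseudoflat} epimorphisms of $d$-homological pairs and functorially finite wide subcategories of $\mathcal{F}$, given by the same assignment $\phi\mapsto\phi_*(\mathcal{G})$. So one only has to check that, under the standing hypothesis $\mathrm{gldim}\,\Phi\le d$, an epimorphism of $d$-homological pairs is $d$-pseudoflat if and only if it is a homological epimorphism. One direction is \cite[Proposition~5.8(i)]{HJV}; the other is immediate from the definitions (a homological epimorphism has $\Tor^\Phi_i(\Gamma,\Gamma)=0$ for all $i>0$, in particular $i=d$). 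Well-definedness of the forward map, the explicit inverse, and both round trips are already contained in \cite{HJV}; the concrete inverse is recorded in Remark~\ref{remark_construction6.3}: take $s=\iota^*(\Phi_\Phi)$ with $\iota^*$ left adjoint to $\mathcal{W}\hookrightarrow\mathcal{F}$, set $\Gamma=\End_\Phi(s)$ and $\mathcal{G}=\phi^!(\mathcal{W})$.

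Your construction has the roles of $\mathcal{U}$ and $\mathcal{U}^\perp$ reversed. You let $\mathcal{U}_\mathcal{W}$ be the thick closure of $\overline{\mathcal{W}}$ in $\mathcal{D}^b(\mmod\Phi)$ and then form the Verdier quotient $\mathcal{D}^b(\mmod\Phi)/\mathcal{U}_\mathcal{W}$; but this kills $\mathcal{W}$, so $\phi_*(\mathcal{G})=\mathcal{W}$ is impossible for the resulting $\phi$. In the correct picture (cf.\ Lemmas~\ref{lemma_tstructure} and~\ref{lemma_univ_intersection}) one has $\overline{\mathcal{W}}\subseteq\mathcal{U}^\perp$, and it is $\mathcal{U}={}^\perp(\text{thick closure of }\overline{\mathcal{W}})$ that is quotiented out. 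Even after this correction, your outline asks you to prove that the quotient is $\mathcal{D}^b(\mmod\Gamma)$ for a finite dimensional $\Gamma$ arising from a homological epimorphism and that $\mathcal{W}$ descends to a $d$-cluster tilting $\mathcal{G}\subseteq\mmod\Gamma$; this is essentially the content of \cite[Theorem~A]{HJV} plus the machinery of Section~\ref{section_univ_loc}, and is far more work than the reduction the paper uses.
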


\begin{proof}
By \cite[Theorem A]{HJV}, there is a bijection between the set of equivalence classes of $d$-pseudoflat epimorphisms of $d$-homological pairs $(\Phi, \mathcal{F})\xrightarrow{\phi}(\Gamma, \mathcal{G})$ and the set of functorially finite wide subcategories of $\mathcal{F}$ sending the equivalence class of 
 $(\Phi, \mathcal{F})\xrightarrow{\phi}(\Gamma, \mathcal{G})$ to $\phi_*(\mathcal{G})$. Hence, it is enough to prove that a morphism of $d$-homological pairs $(\Phi, \mathcal{F})\xrightarrow{\phi}(\Gamma, \mathcal{G})$ is $d$-pseudoflat if and only if it is a homological epimorphism.

Recall that gldim$\Phi\leq d$. By \cite[Proposition 5.8(i)]{HJV} we have that if $(\Phi, \mathcal{F})\xrightarrow{\phi}(\Gamma, \mathcal{G})$ is a $d$-pseudoflat epimorphism of $d$-homological pairs, then it is also a homological epimorphism of $d$-homological pairs.
Suppose now that $(\Phi, \mathcal{F})\xrightarrow{\phi}(\Gamma, \mathcal{G})$ is a homological epimorphism of $d$-homological pairs. Then, $\Gamma\otimes_{\Phi} \Gamma\cong \Gamma$ and Tor$^\Phi_i(\Gamma, \Gamma)=0$ for all $i>0$ by Definition \ref{defn_homo_epi_pairs}. In particular, Tor$^\Phi_d(\Gamma, \Gamma)=0$ and $\phi$ is a $d$-pseudoflat epimorphism of $d$-homological pairs.
\end{proof}

\begin{remark}\label{remark_construction6.3}
In the proof above, we showed that in our setup homological epimorphisms of $d$-homological pairs coincide with $d$-pseudoflat epimorphisms of $d$-homological pairs. Hence, the inverse of the bijection from Proposition \ref{lemma_ac} sends a functorially finite wide subcategory $\mathcal{W}$ of $\mathcal{F}$ to a homological epimorphisms of $d$-homological pairs constructed as follows, see \cite[Construction 6.3 and Proposition 6.4]{HJV} for more details.

Let $\iota^*:\mathcal{F}\rightarrow \mathcal{W}$ be the left adjoint to the inclusion $\iota_*:\mathcal{W}\rightarrow \mathcal{F}$. Set 
\begin{align*}
   s=\iota^*(\Phi_\Phi) \text{ and } \Gamma=\End_\Phi (s).
\end{align*}
There is an algebra homomorphism $\phi$ given by the composition
\begin{align*}
    \Phi\xrightarrow{\sim} \End_\Phi(\Phi)\xrightarrow{\iota^*(-)} \End_\Phi (s)=\Gamma.
\end{align*}
Then, letting $\mathcal{G}:=\phi^{!}(\mathcal{W})\subseteq \mmod\Gamma$, we have that
\begin{align*}
    \phi: (\Phi,\mathcal{F})\rightarrow (\Gamma, \mathcal{G})
\end{align*}
is a homological epimorphism of $d$-homological pairs with $\phi_*(\mathcal{G})=\mathcal{W}$.
\end{remark}

\section{Homological epimorphisms, restriction of scalars and universal localizations}\label{section_homoepi}


In this section, we study homological epimorphisms and their restriction of scalars functors. We then recall the classic notion of universal localizations and study an equivalent definition.
Note that this survey does not take place in higher homological algebra, but it will be useful to understand the generalisation of universal localizations we will introduce in the next section.

\begin{remark}\label{remark_fullfaithful}
If $\phi:\Phi\rightarrow \Gamma$ is an epimorphism of $k$-algebras, then $\phi_* (-)$ is a faithful, full exact functor, see \cite[Remark 5.2]{HJV}. If we further assume that $\phi$ is a homological epimorphism, then
\begin{align*}
   \phi_*(-)=R\Hom_\Gamma(\Gamma, -): \mathcal{D}^b(\mmod\Gamma)\rightarrow \mathcal{D}^b(\mmod\Phi)
\end{align*}
is a full and faithful triangulated functor, see \cite[Theorem 4.4]{GL} and note that the argument goes through also substituting Mod with mod. Moreover, $\phi_*(-)$ has left adjoint functor
\begin{align*}
   \phi^*(-)=-\otimes_\Phi^L\Gamma: \mathcal{D}^b(\mmod\Phi)\rightarrow \mathcal{D}^b(\mmod\Gamma)
\end{align*}
and right adjoint functor
\begin{align*}
   \phi^!(-)=R\Hom_\Phi(\Gamma, -): \mathcal{D}^b(\mmod\Phi)\rightarrow \mathcal{D}^b(\mmod\Gamma).
\end{align*}
\end{remark}

In this section, we work in the following setup.

\begin{setup}
Let $\phi:\Phi\rightarrow \Gamma$ be a homological epimorphism and let 
\begin{align*}
    \mathcal{U}= {}^\perp(\phi_*(\mathcal{D}^b(\mmod\Gamma))).
\end{align*}
\end{setup}

\begin{lemma}\label{lemma_U_wide}
The subcategory $\mathcal{U}$ is a wide subcategory of $\mathcal{D}^b(\mmod\Phi)$.
\end{lemma}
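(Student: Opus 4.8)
The plan is to verify directly that $\mathcal{U} = {}^\perp(\phi_*(\mathcal{D}^b(\mmod\Gamma)))$ satisfies the three closure conditions defining a wide subcategory of the triangulated category $\mathcal{D}^b(\mmod\Phi)$: closure under $\Sigma^{\pm 1}$, closure under direct summands, and closure under extensions. The key input is Remark~\ref{remark_fullfaithful}, which tells us that $\phi_*(-)\colon\mathcal{D}^b(\mmod\Gamma)\to\mathcal{D}^b(\mmod\Phi)$ is a full and faithful triangulated functor; in particular its essential image $\mathcal{Y}:=\phi_*(\mathcal{D}^b(\mmod\Gamma))$ is a triangulated subcategory of $\mathcal{D}^b(\mmod\Phi)$, closed under $\Sigma^{\pm1}$, extensions, and (since $\mathcal{D}^b(\mmod\Gamma)$ is idempotent complete, being $\mathcal{D}^b$ of a finite-dimensional algebra) under direct summands. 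The general principle is then that the left perpendicular ${}^\perp\mathcal{Y}$ of any such subcategory inherits all these closure properties, essentially by adjointness of $\Hom(-,Y)$ with shifts and by the long exact sequence in $\Hom(-,Y)$ associated to a triangle.

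Concretely, I would argue as follows. For $\Sigma^{\pm 1}$: if $X\in\mathcal{U}$, then for every $Y\in\mathcal{Y}$ and every $i$, $\Hom(\Sigma^{\pm 1}X,\Sigma^i Y)\cong\Hom(X,\Sigma^{i\mp 1}Y)=0$ since $\Sigma^{i\mp1}Y\in\mathcal{Y}$; hence $\Sigma^{\pm1}X\in\mathcal{U}$. (Here I should make sure the convention for ${}^\perp$ in the paper means $\Hom(X,\Sigma^i Y)=0$ for all $i\in\mathbb{Z}$, i.e.\ graded-perpendicular; if instead it means only $i=0$, then since $\mathcal{Y}$ itself is shift-stable the two notions agree, so this is harmless.) For direct summands: if $X=X_1\oplus X_2\in\mathcal{U}$, then $\Hom(X_j,\Sigma^iY)$ is a direct summand of $\Hom(X,\Sigma^iY)=0$, so each $X_j\in\mathcal{U}$. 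For extensions: given a triangle $X'\to X\to X''\to \Sigma X'$ with $X',X''\in\mathcal{U}$, apply $\Hom(-,\Sigma^iY)$ for $Y\in\mathcal{Y}$ to obtain a long exact sequence with $\Hom(X',\Sigma^i Y)=0=\Hom(X'',\Sigma^iY)$ on both sides of $\Hom(X,\Sigma^iY)$, forcing the latter to vanish; thus $X\in\mathcal{U}$. Finally, $\mathcal{U}$ is clearly a full subcategory closed under isomorphism, so it is additive. This establishes that $\mathcal{U}$ is wide.

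I do not expect a genuine obstacle here; the statement is a soft, formal consequence of the perpendicular-category formalism. The only point requiring minor care is pinning down the precise meaning of ${}^\perp(-)$ used in the paper (graded vs.\ ungraded Hom-vanishing) and confirming that $\mathcal{D}^b(\mmod\Gamma)$ is idempotent complete so that its image is closed under summands — both of which are standard and available in this setup since $\Gamma$ is a finite-dimensional $k$-algebra. One could alternatively phrase the whole proof as: the orthogonal complement ${}^\perp\mathcal{Y}$ of any triangulated subcategory is itself a triangulated subcategory closed under summands, and triangulated subcategories closed under summands are in particular wide in the sense of the paper's definition.
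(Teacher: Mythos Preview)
Your proposal is correct and follows essentially the same approach as the paper's proof: both use Remark~\ref{remark_fullfaithful} to see that $\mathcal{Y}=\phi_*(\mathcal{D}^b(\mmod\Gamma))$ is a triangulated (hence shift-stable) subcategory, then verify closure of ${}^\perp\mathcal{Y}$ under shifts via $\Hom(\Sigma^i u,x)\cong\Hom(u,\Sigma^{-i}x)$, and closure under extensions via the long exact sequence in $\Hom(-,x)$. Your side remarks on the graded-vs-ungraded meaning of ${}^\perp$ and on idempotent completeness of $\mathcal{D}^b(\mmod\Gamma)$ are harmless but unnecessary here, since shift-stability of $\mathcal{Y}$ already makes the two perpendicular conventions coincide, and closure of $\mathcal{Y}$ under summands is not used in the argument.
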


\begin{proof}
It is easy to see that $\mathcal{U}$ is closed under direct summands. We show it is closed under $\Sigma^{\pm 1}$ and under extensions.

Note that $\phi_*(\mathcal{D}^b(\mmod\Gamma))$ is a triangulated subcategory of $\mathcal{D}^b(\mmod\Phi))$ because
\begin{align*}
    \phi_*:\mathcal{D}^b(\mmod\Gamma)\rightarrow\mathcal{D}^b(\mmod\Phi)
\end{align*}
is a full and faithful triangulated functor by Remark \ref{remark_fullfaithful}. Then, any $x\in \phi_*(\mathcal{D}^b(\mmod\Gamma))$ is such that $\Sigma^{-i}x\in \phi_*(\mathcal{D}^b(\mmod\Gamma))$ and so for any $u\in\mathcal{U}$ and integer $i$, we have that
\begin{align*}
    \Hom_{\mathcal{D}^b(\mmod\Phi)}(\Sigma^i u, x)\cong 
    \Hom_{\mathcal{D}^b(\mmod\Phi)}(u, \Sigma^{-i} x)=0.
\end{align*}
Hence $\Sigma^iu\in\mathcal{U}$ and $\mathcal{U}$ is closed under $\Sigma^{\pm 1}$.

Consider now a triangle in $\mathcal{D}^b(\mmod\Phi)$ of the form
\begin{align*}
    u\rightarrow t\rightarrow v\rightarrow \Sigma u,
\end{align*}
with $u$ and $v$ in $\mathcal{U}$. For any $x\in\phi_*(\mathcal{D}^b(\mmod\Gamma))$, we have a long exact sequence of the form
\begin{align*}
   \cdots\rightarrow \Hom_{\mathcal{D}^b(\mmod\Phi)}(v,x)
   \rightarrow \Hom_{\mathcal{D}^b(\mmod\Phi)}(t,x)
   \rightarrow \Hom_{\mathcal{D}^b(\mmod\Phi)}(u,x)\rightarrow\cdots.
\end{align*}
Note that the first and the last term of the above sequence are zero, then so is the second one and we conclude that $t\in\mathcal{U}$.
\end{proof}

\begin{lemma}\label{lemma_tstructure}
The pair $(\mathcal{U},\phi_*(\mathcal{D}^b (\mmod\Gamma)))$ is a stable $t$-structure in the sense of \cite[Definition 9.14]{M}.
\end{lemma}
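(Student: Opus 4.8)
The plan is to verify directly the axioms of a stable $t$-structure from \cite[Definition 9.14]{M}: namely, that $\mathcal{U}$ and $\mathcal{V}:=\phi_*(\mathcal{D}^b(\mmod\Gamma))$ are full subcategories closed under $\Sigma^{\pm 1}$, that $\Hom_{\mathcal{D}^b(\mmod\Phi)}(\mathcal{U},\mathcal{V})=0$, and that every object $x\in\mathcal{D}^b(\mmod\Phi)$ fits into a triangle $u\to x\to v\to \Sigma u$ with $u\in\mathcal{U}$ and $v\in\mathcal{V}$. The closure of $\mathcal{U}$ under $\Sigma^{\pm 1}$ and the orthogonality $\Hom(\mathcal{U},\mathcal{V})=0$ are already established (or immediate) from the proof of Lemma \ref{lemma_U_wide}: indeed $\mathcal{U}={}^\perp\mathcal{V}$ by definition, and $\mathcal{V}$ is a triangulated subcategory since $\phi_*$ is a full faithful triangulated functor by Remark \ref{remark_fullfaithful}, so $\mathcal{V}$ is closed under $\Sigma^{\pm 1}$ as well. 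It remains to produce, for each $x$, the defining triangle.

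The key step is the construction of the triangle. Here I would use the adjunction $\phi^*\dashv\phi_*$ from Remark \ref{remark_fullfaithful}: let $\eta_x: x\to \phi_*\phi^* x$ be the unit of the adjunction, and complete it to a triangle
\begin{align*}
u\xrightarrow{\ } x\xrightarrow{\eta_x} \phi_*\phi^* x\xrightarrow{\ }\Sigma u
\end{align*}
in $\mathcal{D}^b(\mmod\Phi)$. By construction $\phi_*\phi^* x\in\mathcal{V}$, so it remains only to check $u\in\mathcal{U}={}^\perp\mathcal{V}$, i.e. $\Hom_{\mathcal{D}^b(\mmod\Phi)}(u, \phi_*w)=0$ for every $w\in\mathcal{D}^b(\mmod\Gamma)$. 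Applying $\Hom(-,\phi_*w)$ to the triangle gives a long exact sequence, and using the adjunction isomorphism $\Hom_{\mathcal{D}^b(\mmod\Phi)}(\phi_* z, \phi_* w)\cong \Hom_{\mathcal{D}^b(\mmod\Gamma)}(\phi^*\phi_* z, w)$ together with the fact that $\phi_*$ is full and faithful (so that $\phi^*\phi_*\cong \mathrm{id}$ on $\mathcal{D}^b(\mmod\Gamma)$, equivalently the counit $\phi^*\phi_*\to\mathrm{id}$ is an isomorphism — this is exactly what "homological epimorphism" buys us, cf. \cite[Theorem 4.4]{GL}), one sees that the map $\Hom(\phi_*\phi^*x,\phi_*w)\to\Hom(x,\phi_*w)$ induced by $\eta_x$ is an isomorphism. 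Hence the connecting maps force $\Hom(u,\phi_*w)=0$ and $\Hom(\Sigma u,\phi_*w)=0$, so $u\in{}^\perp\mathcal{V}=\mathcal{U}$, as required.

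I expect the main obstacle to be bookkeeping rather than anything deep: one must be careful that the composite $\phi^*\phi_*\xrightarrow{\sim}\mathrm{id}$ really is an isomorphism on all of $\mathcal{D}^b(\mmod\Gamma)$ (not just on projectives) — this follows because $\phi$ is a homological epimorphism, via $\phi^*\phi_*(\Gamma)=\Gamma\otimes_\Phi^L\Gamma\simeq\Gamma\otimes_\Phi\Gamma\cong\Gamma$ and then dévissage across the bounded derived category — and that the triangle one writes down genuinely satisfies the orthogonality on both sides (the second orthogonality condition $\Hom(\mathcal{V},\mathcal{U})$ is automatically part of the data once $\mathcal{U}={}^\perp\mathcal{V}$ and the functorial triangle exists, by the standard uniqueness argument for $t$-structure truncations, or can be checked directly). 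One should also confirm that the assignment $x\mapsto u$ is functorial, but this is automatic from the functoriality of the unit $\eta$ and the (TR3)-type completion, or can simply be quoted from \cite[Definition 9.14]{M} where the triangle is only required to exist. No higher-homological subtleties enter here, since this lemma lives entirely in $\mathcal{D}^b(\mmod\Phi)$.
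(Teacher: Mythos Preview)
Your proposal is correct and follows essentially the same approach as the paper: both arguments construct the required triangle by completing the unit $\eta_x:x\to\phi_*\phi^*(x)$ of the adjunction $\phi^*\dashv\phi_*$, then check that the cofibre lies in $\mathcal{U}={}^\perp\mathcal{V}$. The only difference is packaging: the paper quotes \cite[Lemma 4.3]{HJV} to say $\eta_x$ is a strong $\mathcal{V}$-envelope and then the dual of \cite[Lemma 2.1]{PJ} to conclude the cofibre is in ${}^\perp\mathcal{V}$, whereas you unpack these lemmas directly by observing that $\phi^*(\eta_x)$ is an isomorphism (triangle identity plus full faithfulness of $\phi_*$) and reading off $\Hom(u,\phi_*w)=0$ from the long exact sequence. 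One small remark: your aside about a ``second orthogonality condition $\Hom(\mathcal{V},\mathcal{U})$'' is unnecessary and slightly misleading---for a stable $t$-structure only $\Hom(\mathcal{U},\mathcal{V})=0$ is required, which you already have by definition of $\mathcal{U}$.
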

\begin{proof}
 Since both $\mathcal{U}$ and $\phi_*(\mathcal{D}^b (\mmod\Gamma))$ are triangulated subcategories and $\mathcal{U}= {}^\perp(\phi_*(\mathcal{D}^b(\mmod\Gamma)))$, it is enough to prove that $\mathcal{U}*\phi_*(\mathcal{D}^b(\mmod\Gamma))=\mathcal{D}^b(\mmod\Phi)$.

By \cite[Lemma 4.3]{HJV} applied to the pair of adjoint functors from Remark \ref{remark_fullfaithful}
\begin{align*}
    \xymatrix@C=3em{
    \mathcal{D}^b(\mmod\Gamma)\ar@/^/[r]^{\phi^*} & \mathcal{D}^b(\mmod\Phi)\ar@/^/[l]^{\phi_*}
    }
\end{align*}
we have that $\phi_*(\mathcal{D}^b(\mmod\Gamma))\subseteq \mathcal{D}^b(\mmod\Phi)$ is strongly enveloping. In particular, any object $x\in \mathcal{D}^b(\mmod\Phi)$ has strong $\phi_*(\mathcal{D}^b(\mmod\Gamma))$-envelope $\eta_x:x\rightarrow\phi_*\phi^*(x)$, where $\eta$ is the unit of the adjunction. Extend $\eta_x$ to a triangle in $\mathcal{D}^b(\mmod\Phi)$:
\begin{align*}
    y\rightarrow x \xrightarrow{\eta_x}\phi_*\phi^*(x)\rightarrow \Sigma y. 
\end{align*}
Since $\phi_*(\mathcal{D}^b(\mmod\Gamma))\subseteq \mathcal{D}^b(\mmod\Phi)$ is full, closed under extensions and direct summands, by the dual of \cite[Lemma 2.1]{PJ}, we have that 
\begin{align*}
    \Hom_{\mathcal{D}^b(\mmod\Phi)}(y,\phi_*(\mathcal{D}^b(\mmod\Gamma)))=0,
\end{align*}
that is $y\in\mathcal{U}$.
\end{proof}

\begin{lemma}\label{lemma_initial_prop}
The morphism $\phi:\Phi\rightarrow \Gamma$ is initial in the category of algebra morphisms $\Phi\rightarrow \Lambda$ with the property that $\mathcal{U}\otimes^L_\Phi \Lambda=0$.
\end{lemma}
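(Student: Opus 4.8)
The plan is to show two things: first, that $\mathcal{U} \otimes^L_\Phi \Gamma = 0$, so that $\phi$ itself belongs to the category in question; and second, that $\phi$ is initial there, i.e.\ any algebra morphism $\Phi \xrightarrow{\psi} \Lambda$ with $\mathcal{U} \otimes^L_\Phi \Lambda = 0$ factors uniquely through $\phi$.

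For the first point, I would argue that for $u \in \mathcal{U}$, the object $u \otimes^L_\Phi \Gamma = \phi^*(u)$ vanishes. By Lemma \ref{lemma_tstructure}, the pair $(\mathcal{U}, \phi_*(\mathcal{D}^b(\mmod\Gamma)))$ is a stable $t$-structure, so in particular $\phi^*$ kills $\mathcal{U}$: indeed $\phi_*$ is fully faithful, so the unit $u \to \phi_*\phi^*(u)$ is a strong $\phi_*(\mathcal{D}^b(\mmod\Gamma))$-envelope (as in the proof of Lemma \ref{lemma_tstructure}), but $u \in \mathcal{U} = {}^\perp(\phi_*(\mathcal{D}^b(\mmod\Gamma)))$ forces this envelope — and hence $\phi_*\phi^*(u)$, and by full faithfulness $\phi^*(u)$ — to be zero. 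Thus $\mathcal{U} \otimes^L_\Phi \Gamma = 0$.

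For the initiality, suppose $\psi : \Phi \to \Lambda$ satisfies $\mathcal{U} \otimes^L_\Phi \Lambda = 0$. I want an algebra morphism $\chi : \Gamma \to \Lambda$ with $\chi \phi = \psi$, unique with this property. The key observation is that the condition $\mathcal{U} \otimes^L_\Phi \Lambda = 0$ means exactly that $\psi^* = - \otimes^L_\Phi \Lambda$ annihilates $\mathcal{U}$, hence factors through the Verdier quotient $\mathcal{D}^b(\mmod\Phi)/\mathcal{U}$; but by Lemma \ref{lemma_tstructure} this quotient is (equivalent to) $\phi_*(\mathcal{D}^b(\mmod\Gamma)) \simeq \mathcal{D}^b(\mmod\Gamma)$, with the quotient functor identified with $\phi^* = -\otimes^L_\Phi\Gamma$. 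Concretely: for any $x \in \mathcal{D}^b(\mmod\Phi)$, applying $-\otimes^L_\Phi\Lambda$ to the triangle $y \to x \to \phi_*\phi^*(x) \to \Sigma y$ from the proof of Lemma \ref{lemma_tstructure} (with $y \in \mathcal{U}$) gives $x \otimes^L_\Phi\Lambda \xrightarrow{\sim} \phi_*\phi^*(x) \otimes^L_\Phi\Lambda$. Taking $x = \Phi$ and chasing how the algebra structure transports, one reads off that $\psi$ factors through $\phi$. I would make this precise by applying the argument to $x = \Phi$: the triangle becomes $y \to \Phi \to \phi_*(\Gamma) \to \Sigma y$, and $- \otimes^L_\Phi \Lambda$ sends it to an isomorphism $\Lambda \cong \Gamma \otimes^L_\Phi \Lambda$ in $\mathcal{D}^b(\mmod\Phi)$; one then checks this isomorphism is induced by an algebra map $\Gamma \to \Lambda$ and that it is the unique one compatible with $\phi$. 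Uniqueness follows because $\phi$ is an epimorphism of rings (Definition \ref{defn_homo_epi_pairs}, via Remark \ref{remark_fullfaithful}), so any two factorizations $\chi_1, \chi_2$ with $\chi_i \phi = \psi$ must agree.

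The main obstacle I anticipate is the passage from the derived/triangulated statement "$-\otimes^L_\Phi\Lambda$ factors through $\phi^*$ up to the stable $t$-structure" to an honest \emph{algebra} homomorphism $\Gamma \to \Lambda$ witnessing the factorization $\psi = \chi\phi$, and verifying its uniqueness as an algebra map rather than merely as a morphism in the derived category. This is the standard subtlety in recognising universal localizations: one has good control of the module/derived categories but must descend to ring homomorphisms. Here it should be manageable because $\phi$ being a ring epimorphism makes $\Gamma$ the endomorphism ring of the relevant object and pins down the factorization, so the heart of the work is identifying $\Gamma\otimes^L_\Phi\Lambda$ with $\Lambda$ as algebras, which the isomorphism above, combined with the multiplicativity of the unit map $\Phi \to \Gamma$, delivers.
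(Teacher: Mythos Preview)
Your proposal is correct and follows essentially the same route as the paper: both verify $\phi^*(\mathcal{U})=0$ via the adjunction and the definition of $\mathcal{U}$, and both obtain the factorization by applying $-\otimes^L_\Phi\Lambda$ to the canonical triangle $y\to x\to\phi_*\phi^*(x)\to\Sigma y$ coming from the stable $t$-structure, then invoke that $\phi$ is a ring epimorphism for uniqueness. The one place where the paper is more explicit than your sketch is precisely the obstacle you flag: to pass from the derived isomorphism $f(\Phi)\cong g\circ q(\Phi)$ to an actual algebra map $\Gamma\to\Lambda$, the paper works systematically with endomorphism rings, showing that the square
\[
\xymatrix{
\End_{\mathcal{D}^b(\mmod\Phi)}(\Phi)\ar[r]^-{q}\ar[d]_{f} & \End_{\phi_*(\mathcal{D}^b(\mmod\Gamma))}(\Gamma)\ar[d]^{g}\\
\End_{\mathcal{D}^b(\mmod\Lambda)}(\Lambda)\ar[r]^-{\sim} & \End_{\mathcal{D}^b(\mmod\Lambda)}(g(\Gamma))
}
\]
commutes, and then reading off the desired ring map $\Gamma\cong\End(\Gamma)\xrightarrow{g}\End(g(\Gamma))\cong\Lambda$. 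Your phrase ``$\Gamma$ is the endomorphism ring of the relevant object'' points in exactly this direction, so there is no real gap---just be aware that making your last paragraph precise amounts to writing down this endomorphism-ring diagram rather than trying to endow $\Gamma\otimes^L_\Phi\Lambda$ directly with an algebra structure.
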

\begin{proof}
Since $\phi^*$ is the left adjoint of $\phi_*$, we have that
\begin{align*}
    0=\Hom_{\mathcal{D}^b(\mmod\Phi)}(\mathcal{U},\phi_*(\mathcal{D}^b(\mmod\Gamma)))\cong
    \Hom_{\mathcal{D}^b(\mmod\Gamma)}(\phi^*(\mathcal{U}),\mathcal{D}^b(\mmod\Gamma)),
\end{align*}

and so $\phi^*(\mathcal{U})=\mathcal{U}\otimes^L_\Phi \Gamma=0$.
Suppose $\Lambda$ is another $k$-algebra with the property that $\mathcal{U}\otimes^L_\Phi \Lambda=0$. There is a commutative diagram of functors
\begin{align*}
    \xymatrix@C=3em{
    \mathcal{U}\ar@{^{(}->}[r]^-{i}\ar[rd]_0& \mathcal{D}^b(\mmod\Phi)\ar[d]^f & \phi_*(\mathcal{D}^b(\mmod\Gamma))\ar@{_{(}->}[l]_{j}\ar[ld]^{g}\\
    & \mathcal{D}^b(\mmod\Lambda) &
    }
\end{align*}
where $f:=-\otimes^L_\Phi \Lambda$ and $g:=f\circ j$. Let $q:\mathcal{D}^b(\mmod\Phi)\rightarrow\phi_*(\mathcal{D}^b(\mmod\Gamma))$ be the functor obtained applying $\phi_*\phi^*(-)$. In particular, $q$ sends each object $x\in \mathcal{D}^b(\mmod\Phi)$ to its strong $\phi_*(\mathcal{D}^b(\mmod\Gamma))$-envelope $\phi_*\phi^*(x)$. 

We now prove that there is a natural equivalence $\zeta: f\xrightarrow{\sim} g\circ q$. Applying the functor $f$ to the natural transformation $\eta: 1_{\mathcal{D}^b(\mmod\Phi)}\rightarrow j\circ q$ given by strong $\phi_*(\mathcal{D}^b(\mmod\Gamma))$-envelopes  $\eta_x:x\rightarrow\phi_*\phi^*(x)$, we obtain a natural transformation $\zeta: f\rightarrow f\circ j\circ q= g\circ q$. Recall that for any $x\in \mathcal{D}^b(\mmod\Phi)$, the triangle
\begin{align*}
    y\rightarrow x \xrightarrow{\eta_x}\phi_*\phi^*(x)\rightarrow \Sigma y 
\end{align*}
is such that $y\in\mathcal{U}$. Then $f(y)=0$ by construction and $f(\eta_x):f(x)\xrightarrow{\sim}g\circ q(x)$ is an isomorphism.

Viewing $\Phi$ as an element of $\mathcal{D}^b(\mmod\Phi)$, we have that
\begin{align*}
    & q(\Phi)=\phi_*\phi^*(\Phi)=R\Hom_\Gamma(\Gamma, \Phi\otimes^L_\Phi \Gamma)\cong \Gamma,\\
    & g\circ q(\Phi)\cong g(\Gamma), \\
    & f(\Phi)=\Phi\otimes^L_\Phi \Lambda\cong \Lambda.
\end{align*}

We prove that the following diagram commutes
\begin{align}\label{diagram_commutative}
\xymatrix@C=3em{    
    \End_{\mathcal{D}^b(\mmod\Phi)} (\Phi)\ar[r]^-{q(-)}\ar[d]^{f(-)}& \End_{\phi_*(\mathcal{D}^b(\mmod\Gamma))}(\Gamma)\ar[d]^{g(-)}\\
    \End_{\mathcal{D}^b(\mmod\Lambda)}(\Lambda)\ar[r]^-e_-\sim &\End_{\mathcal{D}^b(\mmod\Lambda)}(g(\Gamma)),
}
\end{align}
where $e$ is the isomorphism of rings induced by the isomorphism 
\begin{align*}
    \zeta_\Phi:\Lambda\cong f(\Phi)\xrightarrow{\sim}g\circ q(\Phi)\cong g(\Gamma).
\end{align*}
For any element $\lambda\cdot:\Lambda\rightarrow\Lambda$ in $\End_{\mathcal{D}^b(\mmod\Lambda)}(\Lambda)$, there is a commutative diagram:
\begin{align*}
    \xymatrix@C=3em{
    g(\Gamma)\ar[r]^{e(\lambda\cdot)}\ar[d]_{\zeta^{-1}_\Phi}& g(\Gamma)\\
    \Lambda\ar[r]_{\lambda\cdot}&\Lambda\ar[u]_{\zeta_\Phi}.
    }
\end{align*}
Similarly, for any $\varphi\cdot\in\End_{\mathcal{D}^b(\mmod\Phi)}(\Phi)$, there is a commutative diagram:
\begin{align*}
    \xymatrix@C=3em{
    g\circ q(\Phi)\ar[r]^{g\circ q(\varphi\cdot)}\ar[d]_{\zeta^{-1}_\Phi}& g\circ q(\Phi)\\
    \Lambda\ar[r]_{f(\varphi\cdot)}&\Lambda\ar[u]_{\zeta_\Phi}.
    }
\end{align*}
Note that for $\lambda\cdot=f(\varphi\cdot)$ the two diagrams coincide and hence $g\circ q(\varphi\cdot)=e(f(\varphi\cdot))$, that is diagram (\ref{diagram_commutative}) commutes.

Moreover, the diagram

\begin{align}\label{diagram_commutative2}
\xymatrix@C=3em @R=3em{    
    \Phi\ar[d]_{\rotatebox{90}{$\sim$}}\ar[r]^\phi&\Gamma\ar[d]^{\rotatebox{90}{$\sim$}}\\
    \End_{\mathcal{D}^b(\mmod\Phi)} (\Phi)\ar[r]^-{q(-)}& \End_{\phi_*(\mathcal{D}^b(\mmod\Gamma))}(\Gamma)
}
\end{align}

commutes and the vertical arrows are isomorphisms. Combining diagrams (\ref{diagram_commutative2}) and (\ref{diagram_commutative}), we obtain the commutative diagram

\begin{align*}
\xymatrix@C=3em@R=3em{    
    \Phi\ar[d]_{\rotatebox{90}{$\sim$}}\ar[r]^\phi&\Gamma\ar[d]^{\rotatebox{90}{$\sim$}}\\
    \End_{\mathcal{D}^b(\mmod\Phi)} (\Phi)\ar[r]^-{q(-)}\ar[d]^{f(-)}& \End_{\phi_*(\mathcal{D}^b(\mmod\Gamma))}(\Gamma)\ar[d]^{g(-)}\\
    \End_{\mathcal{D}^b(\mmod\Lambda)}(\Lambda)\ar[d]_{\rotatebox{90}{$\sim$}} &\End_{\mathcal{D}^b(\mmod\Lambda)}(g(\Gamma))\ar[l]^-{e^{-1}}_-\sim\\
    \Lambda.
}
\end{align*}
Hence we have the commutative diagram
\begin{align*}
    \xymatrix{
    \Phi\ar[r]^{\phi}\ar[d]& \Gamma\ar[ld]^{\gamma}\\
    \Lambda
    }
\end{align*}
where $\gamma$ is unique because $\phi$ is a homological epimorphism. So we conclude that $\phi$ is initial in the category of algebra morphisms $\Phi\rightarrow \Lambda$ such that $\mathcal{U}\otimes^L_\Phi \Lambda=0$.
\end{proof}

\subsection{Universal localizations}
In the rest of this section, we recall and study the notion of universal localizations.

\begin{defn}\label{defn_classic}
A \textit{universal localization of $\Phi$} with respect to a set $\mathcal{S}$ of morphisms between finitely generated projective right $\Phi$-modules is an algebra homomorphism $\Phi\rightarrow \Phi_\mathcal{S}$ universal with respect to the property that every element $\sigma\otimes_\Phi \Phi_\mathcal{S}$ with $\sigma\in\mathcal{S}$ has an inverse.
\end{defn}
As mentioned in the introduction, by \cite[Theorem 4.1]{S}, the universal localization of $\Phi$ with respect to any such set $\mathcal{S}$ exists and it is unique up to unique isomorphism.

Alternatively, universal localizations can be defined using $\mathcal{D}^{\perf}(\Phi)$, that is the homotopy category whose objects are bounded complexes of finitely generated projective $\Phi$-modules, that is \textit{perfect complexes}, and whose morphisms are homotopy equivalence classes of chain maps.

\begin{remark}\label{remark_alternative_defn}
Let $\mathcal{S}$ be a set of maps between finitely generated projective right $\Phi$-modules. The set $\mathcal{S}$ can be viewed as a set of objects in $\mathcal{D}^{\perf}(\Phi)$, where each element of $\mathcal{S}$ is a complex concentrated in two degrees. Let $\mathcal{R}^c_\mathcal{S}$ be the smallest triangulated subcategory of $\mathcal{D}^{\perf}(\Phi)$ containing $\mathcal{S}$ and all the direct summands of any of its objects.
Then, for any morphism $\sigma\in\mathcal{S}$, we have that $\sigma\otimes^L_\Phi \Phi_\Lambda=0$ if and only if $\sigma\otimes_\Phi \Lambda$ is acyclic and hence invertible. By the way $\mathcal{R}^c_\mathcal{S}$ is constructed, we conclude that $\mathcal{R}^c_\mathcal{S}\otimes^L_\Phi \Lambda=0$ if and only if $\sigma\otimes_\Phi \Lambda$ is invertible for each $\sigma\in\mathcal{S}$. One can also easily see that the universal property from Definition \ref{defn_classic} is equivalent to the initial property described above.
\end{remark}

By the above remark, the universal localization of $\Phi$ with respect to $\mathcal{S}$ can alternatively be defined as follows.

\begin{defn}\label{defn_alternative}
Let $\mathcal{S}$ be a set of maps between finitely generated projective right $\Phi$-modules and $\mathcal{R}^c_\mathcal{S}$ be the smallest triangulated subcategory of $\mathcal{D}^{\perf}(\Phi)$ containing $\mathcal{S}$ and all the direct summands of any of its objects. The universal localization of $\Phi$ with respect to $\mathcal{S}$ is the algebra homomorphism $\Phi\rightarrow\Phi_\mathcal{S}$ initial in the category of $k$-algebra homomorphisms $\Phi\rightarrow\Lambda$ with the property that $\mathcal{R}^c_\mathcal{S}\otimes^L_\Phi \Lambda=0$.
\end{defn}

Neeman and Ranicki showed in \cite{NR} how this construction links universal localizations to Verdier quotients. In the following, let $\mathcal{T}^c$ be the idempotent completion of the Verdier quotient $\mathcal{D}^{\perf}(\Phi)/\mathcal{R}^c_\mathcal{S}$.

\begin{theorem}[{\cite[Theorem 0.7]{NR}}]\label{thmC}
 Consider the natural functor $\mathcal{D}^{\perf}(\Phi)\rightarrow \mathcal{D}^{\perf}(\Phi_\mathcal{S})$ which takes a complex $C$ in $\mathcal{D}^{\perf}(\Phi)$ and sends it to $C\otimes_\Phi\Phi_\mathcal{S}$. Take the canonical factorization
 \begin{align*}
     \mathcal{D}^{\perf}(\Phi)\xrightarrow{\pi} \mathcal{T}^c\xrightarrow{T}\mathcal{D}^{\perf}(\Phi_\mathcal{S}).
 \end{align*}
Then the following are equivalent:
\begin{enumerate}
    \item The functor $T$ is an equivalence of categories.
    \item For all $n>0$, $\Tor^\Phi_n(\Phi_\mathcal{S},\Phi_\mathcal{S})=0$.
\end{enumerate}
\end{theorem}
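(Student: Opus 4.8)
The plan is to pass from the perfect derived categories to the full unbounded derived categories $\mathcal{D}(\Mod\Phi)$ and $\mathcal{D}(\Mod\Phi_\mathcal{S})$, where there is enough room to manoeuvre. Write $\mathcal{R}_\mathcal{S}$ for the localizing subcategory of $\mathcal{D}(\Mod\Phi)$ generated by $\mathcal{S}$. Since the elements of $\mathcal{S}$ are compact objects of $\mathcal{D}(\Mod\Phi)$, this is a smashing localizing subcategory with $\mathcal{R}_\mathcal{S}\cap\mathcal{D}^{\perf}(\Phi)=\mathcal{R}^c_\mathcal{S}$, and by the localization theorem of Thomason and Neeman the Verdier quotient $\mathcal{D}(\Mod\Phi)/\mathcal{R}_\mathcal{S}$ is compactly generated with subcategory of compact objects canonically equivalent to $\mathcal{T}^c=(\mathcal{D}^{\perf}(\Phi)/\mathcal{R}^c_\mathcal{S})^\natural$. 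By Remark \ref{remark_alternative_defn}, $-\otimes^L_\Phi\Phi_\mathcal{S}$ sends each $\sigma\in\mathcal{S}$ to the mapping cone of an isomorphism, hence annihilates $\mathcal{R}_\mathcal{S}$, so it factors as
\begin{align*}
\mathcal{D}(\Mod\Phi)\xrightarrow{\ Q\ }\mathcal{D}(\Mod\Phi)/\mathcal{R}_\mathcal{S}\xrightarrow{\ \bar F\ }\mathcal{D}(\Mod\Phi_\mathcal{S}),
\end{align*}
with $\bar F$ coproduct-preserving and restricting to $T$ on compact objects. A routine d\'evissage (both $\bar F$ and the identity commute with coproducts, and the compact objects generate) shows that $T$ is an equivalence if and only if $\bar F$ is, so the problem reduces to characterising when $\bar F$ is an equivalence. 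Note that $\phi_*$ lands in $\mathcal{R}_\mathcal{S}^\perp$ (as $\phi^*\sigma\simeq 0$), so $G:=Q\circ\phi_*$ is a right adjoint of $\bar F$ that factors through the equivalence $Q|_{\mathcal{R}_\mathcal{S}^\perp}$.

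For $(1)\Rightarrow(2)$: if $\bar F$ is an equivalence, then its right adjoint $G=Q\phi_*$ is a quasi-inverse, in particular fully faithful; since $G$ factors as $\phi_*$ followed by the equivalence $Q|_{\mathcal{R}_\mathcal{S}^\perp}$, this forces $\phi_*$ itself to be fully faithful, i.e. the counit $\phi^*\phi_*\to\mathrm{id}$ is invertible. Evaluating this counit at the regular module $\Phi_\mathcal{S}$ identifies it with the multiplication $\Phi_\mathcal{S}\otimes^L_\Phi\Phi_\mathcal{S}\to\Phi_\mathcal{S}$, which is therefore an isomorphism; that is $\Tor^\Phi_n(\Phi_\mathcal{S},\Phi_\mathcal{S})=0$ for all $n>0$.

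For $(2)\Rightarrow(1)$: assume (2). Combined with the fact that $\phi$ is a ring epimorphism (\cite[Theorem 4.1]{S}), (2) says exactly that $\phi$ is a homological epimorphism, so $\phi_*\colon\mathcal{D}(\Mod\Phi_\mathcal{S})\to\mathcal{D}(\Mod\Phi)$ is fully faithful (\cite[Theorem 4.4]{GL}); hence $G=Q\phi_*$ is fully faithful and $\bar F$ is a Bousfield localization, so $\bar F$ is an equivalence precisely when $\ker\bar F=0$, i.e. when $\ker(-\otimes^L_\Phi\Phi_\mathcal{S})=\mathcal{R}_\mathcal{S}$. Since $\mathcal{R}_\mathcal{S}\subseteq\ker(-\otimes^L_\Phi\Phi_\mathcal{S})$, applying the acyclization triangle of $\mathcal{R}_\mathcal{S}$ reduces this to
\begin{align*}
\ker(-\otimes^L_\Phi\Phi_\mathcal{S})\cap\mathcal{R}_\mathcal{S}^\perp=0,
\end{align*}
which I would prove cohomologically. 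Using the triangle attached to the two-term complex $\sigma$ together with the exactness of $\Hom_\Phi(P,-)$ for $P$ projective, one sees that $Z\in\mathcal{R}_\mathcal{S}^\perp$ if and only if $\Hom_\Phi(\sigma,H^n(Z))$ is bijective for every $\sigma\in\mathcal{S}$ and every $n$; by the standard description of $\mathrm{Mod}\,\Phi_\mathcal{S}$ as the $\mathcal{S}$-orthogonal subcategory of $\mathrm{Mod}\,\Phi$, this means each $H^n(Z)$ is a $\Phi_\mathcal{S}$-module. By (2) and associativity of the derived tensor product, $M\otimes^L_\Phi\Phi_\mathcal{S}\simeq M\otimes^L_{\Phi_\mathcal{S}}(\Phi_\mathcal{S}\otimes^L_\Phi\Phi_\mathcal{S})\simeq M$ for every $\Phi_\mathcal{S}$-module $M$, so $\Tor^\Phi_{>0}(H^q(Z),\Phi_\mathcal{S})=0$; feeding this into the hyper-$\Tor$ spectral sequence computing the cohomology of $Z\otimes^L_\Phi\Phi_\mathcal{S}$ collapses it to $H^n(Z\otimes^L_\Phi\Phi_\mathcal{S})\cong H^n(Z)$. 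Hence $Z\otimes^L_\Phi\Phi_\mathcal{S}=0$ forces $Z=0$, so $\bar F$, and therefore $T$, is an equivalence.

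The step I expect to be the main obstacle is precisely the identification $\ker(-\otimes^L_\Phi\Phi_\mathcal{S})=\mathcal{R}_\mathcal{S}$ — recognising the purely categorical kernel of derived base change as the localizing subcategory generated by the two-term complexes of $\mathcal{S}$. This is where hypothesis (2) is used in an essential, non-formal way: without stable flatness this kernel need not even be generated by its compact objects, and the equivalence genuinely fails. Everything else (compact generation of the quotient via Thomason--Neeman, the adjunction bookkeeping, the change-of-rings collapse) is formal once this point is in place.
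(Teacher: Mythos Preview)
The paper does not supply its own proof of this statement: it is quoted verbatim as \cite[Theorem 0.7]{NR} and used as a black box. So there is no in-paper argument to compare your proposal against.

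That said, your outline is essentially the strategy Neeman and Ranicki themselves use: lift to the unbounded derived categories, invoke Thomason--Neeman to identify $\mathcal{T}^c$ with the compact objects of $\mathcal{D}(\Mod\Phi)/\mathcal{R}_\mathcal{S}$, and reduce the question to whether $\ker(-\otimes^L_\Phi\Phi_\mathcal{S})=\mathcal{R}_\mathcal{S}$. Your identification of that equality as the crux is accurate, and the cohomological reduction via the description of $\Mod\Phi_\mathcal{S}$ as the $\mathcal{S}$-local $\Phi$-modules is the right mechanism.

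One technical point deserves care. Your appeal to the hyper-$\Tor$ spectral sequence for an \emph{unbounded} complex $Z$ over a general ring is delicate: convergence is not automatic without a finiteness hypothesis (e.g.\ finite flat dimension of $\Phi_\mathcal{S}$ over $\Phi$). In the paper's setting this is harmless, since $\mathrm{gldim}\,\Phi\leq d$ forces every $\Phi$-module to have finite projective dimension. For the general Neeman--Ranicki statement you can sidestep the spectral sequence entirely: once you know each $H^n(Z)$ lies in $\phi_*(\Mod\Phi_\mathcal{S})$, use that $\phi_*$ is $t$-exact and that $\mathrm{im}\,\phi_*$ is a triangulated subcategory closed under products and coproducts (hence under homotopy limits and colimits) to rebuild $Z$ inside $\mathrm{im}\,\phi_*$ from its bounded truncations. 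This gives $Z\in\mathrm{im}\,\phi_*$, whence $Z\cong\phi_*\phi^*(Z)=0$.
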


\begin{remark}\label{remarkD}
Note that in the case when $\Phi$ is hereditary, Theorem \ref{thmKS} implies that any universal localization $\Phi\rightarrow\Phi_\mathcal{S}$ is also a homological epimorphism (and viceversa). In particular, $\Tor^\Phi_n(\Phi_\mathcal{S},\Phi_\mathcal{S})=0$ for all $n>0$. Hence, by Theorem \ref{thmC}, there is an equivalence of categories
\begin{align*}
   T:\mathcal{T}^c\xrightarrow{\sim} \mathcal{D}^{\perf} (\Phi_\mathcal{S}).
\end{align*}
\end{remark}

Going back to the initial setup of this section, we have the following.
\begin{lemma}
Let $\phi:\Phi\rightarrow \Gamma$ be a homological epimorphism and  
\begin{align*}
    \mathcal{U}= {}^\perp(\phi_*(\mathcal{D}^b(\mmod\Gamma))).
\end{align*}
Then there is an equivalence of categories
\begin{align*}
    T: \mathcal{D}^b(\mmod\Phi)/\mathcal{U}\rightarrow \phi_*(\mathcal{D}^b(\mmod\Gamma)).
\end{align*}
\end{lemma}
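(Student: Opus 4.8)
The plan is to show that the pair $(\mathcal{U}, \phi_*(\mathcal{D}^b(\mmod\Gamma)))$ being a stable $t$-structure (Lemma \ref{lemma_tstructure}) yields the desired equivalence by a standard Verdier-quotient argument. First I would invoke the general fact that if $(\mathcal{U}, \mathcal{V})$ is a stable $t$-structure in a triangulated category $\mathcal{T}$, then the composite $\mathcal{V} \hookrightarrow \mathcal{T} \xrightarrow{\pi} \mathcal{T}/\mathcal{U}$ is an equivalence of triangulated categories; see e.g. \cite[Definition 9.14]{M} and the surrounding discussion, or the classical treatment via Bousfield localization. Concretely, set $\mathcal{T} = \mathcal{D}^b(\mmod\Phi)$, $\mathcal{U}$ as given, and $\mathcal{V} = \phi_*(\mathcal{D}^b(\mmod\Gamma))$. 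The functor $T$ is then defined so that the diagram
\begin{align*}
\xymatrix@C=3em{
\mathcal{V}\ar@{^{(}->}[r]\ar[rd]_{\sim} & \mathcal{T}\ar[d]^\pi \\
& \mathcal{T}/\mathcal{U}
}
\end{align*}
commutes up to natural isomorphism, and $T$ is a quasi-inverse to this composite, hence an equivalence.

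In more detail, the key steps are: (1) verify $\pi|_{\mathcal{V}}$ is essentially surjective — every object $x \in \mathcal{T}$ fits, by Lemma \ref{lemma_tstructure}, into a triangle $u \to x \to v \to \Sigma u$ with $u \in \mathcal{U}$, $v \in \mathcal{V}$, so $\pi(x) \cong \pi(v)$ in $\mathcal{T}/\mathcal{U}$; (2) verify $\pi|_{\mathcal{V}}$ is full and faithful — this is where the orthogonality $\mathcal{U} = {}^\perp\mathcal{V}$ enters: a morphism in $\mathcal{T}/\mathcal{U}$ from $\pi(v)$ to $\pi(v')$ is represented by a roof $v \xleftarrow{s} z \to v'$ with $\mathrm{cone}(s) \in \mathcal{U}$, and since $\Hom_\mathcal{T}(\mathcal{U}, \mathcal{V}) = 0$ one checks using the long exact sequences that $s$ becomes invertible after applying $\Hom(-, v')$, so the roof comes from a genuine morphism $v \to v'$ in $\mathcal{T}$; similarly $\Hom(\mathcal{U}[1], \mathcal{V}) = \Hom(\mathcal{U}, \mathcal{V}) = 0$ (using that $\mathcal{U}$ is closed under $\Sigma^{\pm 1}$ by Lemma \ref{lemma_U_wide}) gives injectivity. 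Then $T$ is defined as the quasi-inverse, sending $\pi(x)$ to the $\mathcal{V}$-component $\phi_*\phi^*(x)$ of $x$, i.e. $T \cong \phi_*\phi^*$ composed with the canonical identification — indeed $T \circ \pi \cong q = \phi_*\phi^*(-)$, which is the functor already used in the proof of Lemma \ref{lemma_initial_prop}.

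The main obstacle — really the only non-formal point — is the fullness and faithfulness of $\pi|_{\mathcal{V}}$, i.e. checking that morphisms in the Verdier quotient between objects of $\mathcal{V}$ are computed in $\mathcal{T}$ itself. This is the standard localization-theoretic lemma that for a stable $t$-structure (equivalently, when $\mathcal{U}$ is a thick subcategory admitting a right adjoint to the inclusion, here given by the strong envelope $\eta_x : x \to \phi_*\phi^*(x)$ from Lemma \ref{lemma_tstructure}), the quotient functor restricted to the orthogonal complement is an equivalence. I would either cite this directly from \cite{M} or \cite{KS}, or spell out the two-step roof calculus argument above, the crucial inputs being $\Hom_\mathcal{T}(\mathcal{U}, \mathcal{V}) = 0$, stability of $\mathcal{U}$ under shifts, and the existence of the functorial triangle $u \to x \to v \to \Sigma u$ with $u\in\mathcal{U}$, $v\in\mathcal{V}$. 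Everything else is bookkeeping.
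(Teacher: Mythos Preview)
Your proposal is correct and follows essentially the same approach as the paper: both reduce to Lemma~\ref{lemma_tstructure} establishing the stable $t$-structure $(\mathcal{U},\phi_*(\mathcal{D}^b(\mmod\Gamma)))$, and then invoke the standard fact that for a stable $t$-structure $(\mathcal{U},\mathcal{V})$ the composite $\mathcal{V}\hookrightarrow\mathcal{T}\to\mathcal{T}/\mathcal{U}$ is an equivalence. The only difference is that the paper dispatches this last step by a direct citation to \cite[Proposition~1.2~I(3)]{IKM}, whereas you sketch the roof-calculus argument; either is fine here.
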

\begin{proof}
Recall that $(\mathcal{U},\phi_*(\mathcal{D}^b(\mmod\Gamma)))$ is a stable $t$-structure by Lemma \ref{lemma_tstructure}. Then the result follows from \cite[Proposition 1.2 I(3)]{IKM}.
\end{proof}

Note that the above lemma resembles Theorem \ref{thmC}. As explained in Remark \ref{remarkD}, when $d=1$ in our setup, a morphism $\Phi\rightarrow\Gamma$ is a universal localization if and only if it is a homological epimorphism and we have an equivalence of categories
\begin{align*}
   T:\mathcal{T}^c\xrightarrow{\sim} \mathcal{D}^{\perf} (\Gamma).
\end{align*}
On the other hand, in our construction $(\mathcal{U},\mathcal{U}^{\perp})$ is a stable $t$-structure by Lemma \ref{lemma_tstructure} and $\mathcal{D}^b(\mmod\Phi)$ is idempotent complete, so $\mathcal{D}^b(\mmod\Phi)/\mathcal{U}$ is already idempotent complete. Moreover, because of our setup, we could equivalently write $\mathcal{D}^{\perf} (\Phi)$ or $\mathcal{D}^b (\mmod\Phi)$.

Taking inspiration from Definition \ref{defn_alternative}, in the next section we introduce a definition of universal localizations of a $d$-homological pair $(\Phi,\mathcal{F})$ for a more general choice of $\mathcal{U}$ that agrees with the above construction for the special case when
\begin{align*}
    \mathcal{U}= {}^\perp(\phi_*(\mathcal{D}^b(\mmod\Gamma)))
\end{align*}
for a homological epimorphism $\phi:\Phi\rightarrow\Gamma$ with an extra property.


\section{Universal localizations of $d$-homological pairs}\label{section_univ_loc}
In this section, we introduce the definition of universal localizations of $d$-homological pairs. We show their connection to homological epimorphisms of $d$-homological pairs and prove their existence for any given $\mathcal{U}$ with the required properties.



Working in Setup \ref{setup} and generalising the alternative definition of classic universal localizations presented in Definition \ref{defn_alternative}, we introduce the definition of universal localizations of $d$-homological pairs. Recall that since we work in categories of finitely generated modules and our algebras have finite global dimension, we can write $\mathcal{D}^b$ instead of $\mathcal{D}^{\perf}$.

\begin{defn}\label{defn_higher_univ_loc}
Let $\mathcal{U}\subseteq\mathcal{D}^b(\mmod \Phi)$ be a wide subcategory satisfying the following two conditions:
\begin{enumerate}
    \item\label{property1} $\mathcal{U}^\perp$ is functorially finite in $\mathcal{D}^b(\mmod \Phi)$,
    \item\label{property2} each object in $\mathcal{U}^\perp$ has its $\overline{\mathcal{F}}$-cover and its $\overline{\mathcal{F}}$-envelope in $\mathcal{U}^\perp\cap\overline{\mathcal{F}}$.
\end{enumerate}
A \textit{universal localization of $(\Phi, \mathcal{F})$ with respect to $\mathcal{U}$} is an algebra morphism $\Phi\xrightarrow{\phi}\Gamma$ initial in the category of algebra morphisms $\Phi\rightarrow \Lambda$ such that $\mathcal{U}\otimes^L_\Phi \Lambda=0$.
\end{defn}

\begin{remark}
Definition \ref{defn_higher_univ_loc} makes sense also if we drop the assumption gldim$\Phi\leq d$. Moreover,  if a universal localization of $(\Phi,\mathcal{F})$ with respect to some $\mathcal{U}$ with the given properties exists, then it is unique up to isomorphism. However, it is not clear whether it will always exist in such a general setup. 
We will show that if gldim$\Phi\leq d$, then such a construction exists for any given $\mathcal{U}$ with the required properties and we can hence refer to it as \textit{the} universal localization of $(\Phi, \mathcal{F})$ with respect to $\mathcal{U}$. 
\end{remark}

We show how in the base case the above definition coincides with the definition of universal localization of $\Phi$ with respect to a set of morphisms between finitely generated projectives $\mathcal{S}$.

\begin{proposition}\label{remark_basecase_coincides}
In the classic case, that is $d=1$ and $\Phi$ is a finite dimensional hereditary $k$-algebra, a universal localization in the sense of Definition \ref{defn_higher_univ_loc} is precisely a universal localization in the sense of Definition \ref{defn_alternative}. In particular, the objects of $\mathcal{U}$ are direct sums of shifts of $\Phi$-modules and $\mathcal{U}=\mathcal{R}^c_\mathcal{S}$, where $\mathcal{S}$ is the set of projective resolutions of these modules.
\end{proposition}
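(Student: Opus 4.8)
The plan is to show that when $d=1$ and $\Phi$ is hereditary, Definition \ref{defn_higher_univ_loc} reduces to Definition \ref{defn_alternative}. Since both definitions describe an algebra morphism $\Phi \to \Lambda$ by the same initial/universal property with respect to the condition $\mathcal{X} \otimes^L_\Phi \Lambda = 0$ for an appropriate subcategory $\mathcal{X}$ of $\mathcal{D}^{\perf}(\Phi) = \mathcal{D}^b(\mmod\Phi)$, the whole content is to identify the allowed subcategories. Concretely, I would prove: (i) every $\mathcal{U} \subseteq \mathcal{D}^b(\mmod\Phi)$ satisfying conditions (\ref{property1}) and (\ref{property2}) of Definition \ref{defn_higher_univ_loc} is of the form $\mathcal{R}^c_\mathcal{S}$ for a suitable set $\mathcal{S}$ of maps between finitely generated projectives, and moreover its objects are direct sums of shifts of $\Phi$-modules; and conversely (ii) every $\mathcal{R}^c_\mathcal{S}$ arising from such an $\mathcal{S}$ is a wide subcategory of $\mathcal{D}^b(\mmod\Phi)$ satisfying (\ref{property1}) and (\ref{property2}). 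Once these two facts are in place, the equality of the two universal properties is immediate because the condition $\mathcal{U}\otimes^L_\Phi\Lambda = 0$ literally becomes $\mathcal{R}^c_\mathcal{S}\otimes^L_\Phi\Lambda = 0$, and initial objects of identical categories coincide.

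First I would recall that when $d=1$ we have $\overline{\mathcal{F}} = \mathcal{D}^b(\mmod\Phi)$, so condition (\ref{property2}) is vacuous (as already noted in the text after the statement of the higher definition), and condition (\ref{property1}) says exactly that $\mathcal{U}^\perp$ is functorially finite. Next, I would invoke Theorem \ref{thmKS}: wide subcategories of $\mathcal{D}^b(\mmod\Phi)$ are in bijection with wide subcategories of $\mmod\Phi$, the correspondence sending a wide subcategory $\mathcal{V} \subseteq \mmod\Phi$ to the smallest wide (equivalently, since $\Phi$ is hereditary, the smallest triangulated-and-summand-closed) subcategory of $\mathcal{D}^b(\mmod\Phi)$ containing $\mathcal{V}$; over a hereditary algebra every object of $\mathcal{D}^b(\mmod\Phi)$ is a finite direct sum of shifts of modules, so the objects of $\mathcal{U}$ are direct sums of shifts of modules in the corresponding wide subcategory $\mathcal{V} = \mathcal{U} \cap \mmod\Phi$. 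This already gives the "in particular" clause. I would then set $\mathcal{S}$ to be the set of projective resolutions (two-term, since $\Phi$ is hereditary) of the modules in $\mathcal{V}$ — more precisely, pick a generating set of $\mathcal{V}$ under the wide-closure, take minimal projective presentations $0 \to P_1 \xrightarrow{\sigma} P_0 \to M \to 0$, and view each $\sigma$ as a two-term perfect complex. Since $\mathcal{R}^c_\mathcal{S}$ is the smallest triangulated summand-closed subcategory containing these complexes, and each such complex is isomorphic in $\mathcal{D}^b(\mmod\Phi)$ to $M[?]$ (a shift of $M$, by hereditariness), we get $\mathcal{R}^c_\mathcal{S}$ equals the smallest triangulated summand-closed subcategory containing the modules generating $\mathcal{V}$, which by the $d=1$ part of the discussion is precisely $\mathcal{U}$.

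The remaining point is to check that the functorial-finiteness hypothesis (\ref{property1}) is not an extra restriction compared to Definition \ref{defn_alternative}, i.e.\ that $\mathcal{R}^c_\mathcal{S}$ always has functorially finite right perpendicular; and conversely that an arbitrary wide $\mathcal{U}$ with functorially finite $\mathcal{U}^\perp$ really does come from a set $\mathcal{S}$. For the forward direction I would use that $\mathcal{R}^c_\mathcal{S} \subseteq \mathcal{D}^{\perf}(\Phi)$ is generated by finitely many (or a set of) compact objects, so by Lemma \ref{lemma_tstructure} (or the stable $t$-structure / Bousfield localization machinery for compactly generated situations) the inclusion admits the relevant adjoints and $(\mathcal{R}^c_\mathcal{S}, (\mathcal{R}^c_\mathcal{S})^\perp)$ is a stable $t$-structure, whence $(\mathcal{R}^c_\mathcal{S})^\perp$ is functorially finite; alternatively, appeal directly to the Krause–Šťovíček correspondence, under which every wide subcategory of $\mathcal{D}^b(\mmod\Phi)$ corresponds to a homological epimorphism $\Phi\to\Gamma$ and thus, via Lemma \ref{lemma_tstructure}, automatically has functorially finite perpendicular. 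For the converse, given wide $\mathcal{U}$ with $\mathcal{U}^\perp$ functorially finite, Theorem \ref{thmKS} already tells us $\mathcal{U}$ corresponds to a wide subcategory of $\mmod\Phi$, and the construction of $\mathcal{S}$ above recovers it. The main obstacle I anticipate is purely bookkeeping: being careful that "smallest triangulated subcategory closed under summands" generated by a set of modules coincides with "smallest wide subcategory" over a hereditary ring (this uses that $\Ext^{\geq 2}$ vanishes so wide = thick-in-the-module-sense is compatible with the derived picture), and that the passage between a two-term complex $\sigma$ and the shift of its cokernel is handled correctly on the level of the generated subcategories; none of this is deep, but it requires stating the hereditary structure theorem for $\mathcal{D}^b(\mmod\Phi)$ cleanly and matching it against the ingredients of Definition \ref{defn_alternative}.
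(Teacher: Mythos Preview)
Your outline is correct and structurally matches the paper's proof: both directions are argued, the ``in particular'' clause comes from the hereditary decomposition of $\mathcal{D}^b(\mmod\Phi)$, property~(\ref{property2}) is trivial, and the content lies in checking property~(\ref{property1}) for $\mathcal{R}^c_{\mathcal{S}}$.

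The one place the paper is sharper than your sketch is precisely that last check. You propose either compact-generation/Bousfield machinery or a black-box appeal to the Krause--\v{S}\v{t}ov\'i\v{c}ek bijection plus Lemma~\ref{lemma_tstructure}. The first route is awkward here because you are in the small category $\mathcal{D}^b(\mmod\Phi)$, so ``compactly generated'' arguments require passing to $\mathcal{D}(\Mod\Phi)$ and restricting back. The second route is morally right but hides the key identification: Lemma~\ref{lemma_tstructure} only applies once you know $\mathcal{R}^c_{\mathcal{S}} = {}^\perp(\phi_*(\mathcal{D}^b(\mmod\Gamma)))$ for the specific $\Gamma=\Phi_{\mathcal{S}}$. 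The paper proves exactly this identification, using \cite[Theorem~6.1]{KS} to see $\phi$ is a homological epimorphism, then Theorem~\ref{thmC} (Neeman--Ranicki) to factor $-\otimes^L_\Phi\Gamma$ through the Verdier quotient by $\mathcal{R}^c_{\mathcal{S}}$, and finally Verdier's characterisation of the quotient kernel to conclude ${}^\perp(\phi_*(\mathcal{D}^b(\mmod\Gamma)))=\mathcal{R}^c_{\mathcal{S}}$; functorial finiteness of the perpendicular then follows from the two adjoints of $\phi_*$. Your proposal would go through once you supply this identification, but as written it treats the step as automatic when it is in fact the technical core.
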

\begin{proof}
Since the only $1$-cluster tilting subcategory of $\mmod\Phi$ is $\mmod\Phi$ itself, the $1$-homological pair is forced to be $(\Phi,\Phi)$. Suppose that $\phi:(\Phi,\Phi)\rightarrow (\Gamma,\Gamma)$ is the universal localization of $(\Phi,\Phi)$ with respect to a wide subcategory $\mathcal{U}\subseteq\mathcal{D}^b(\mmod\Phi)$ with properties (\ref{property1}) and (\ref{property2}) in the sense of Definition \ref{defn_higher_univ_loc}. Since $\Phi$ is hereditary, any object in $\mathcal{D}^b(\mmod\Phi)$ is isomorphic to its cohomology.
Hence the objects in $\mathcal{U}$ are direct sums of shifts of $\Phi$-modules, all having at most $2$-terms projective resolutions. Hence, choosing $\mathcal{S}$ to be the set of these projective resolutions, and using the same notation as in Remark \ref{remark_alternative_defn}, we have that $\mathcal{U}=\mathcal{R}^c_\mathcal{S}$.
Since the initial properties of the two definitions clearly coincide, $\phi:\Phi\rightarrow \Gamma$ is also the universal localization of $\Phi$ with respect to $\mathcal{S}$ in the sense of Definition \ref{defn_alternative}. 

Suppose now that $\phi:\Phi\rightarrow \Gamma$ is the universal localization of $\Phi$ with respect to some set $\mathcal{S}$ in the sense of Definition \ref{defn_alternative}. Then $\mathcal{U}:=\mathcal{R}^c_\mathcal{S}\subseteq\mathcal{D}^b(\mmod\Phi)$ is a wide subcategory by construction and the initial properties of the two definitions coincide. It only remains to check that properties (\ref{property1}) and (\ref{property2}) hold in this case.

We have that $\mathcal{F}=\mmod\Phi$ is the only choice of a $1$-cluster tilting subcategory, so that $\overline{\mathcal{F}}=\mathcal{D}^b(\mmod\Phi)$ and property (\ref{property2}) clearly holds.

By \cite[Theorem 6.1]{KS}, $\phi$ is a homological epimorphism and so
\begin{align*}
({}^\perp(\phi_*(\mathcal{D}^b(\mmod\Gamma)),\phi_*(\mathcal{D}^b(\mmod\Gamma)))
\end{align*}
is a stable $t$-structure by Lemma \ref{lemma_tstructure}. Moreover, as seen in Remark \ref{remark_fullfaithful}, the functor
\begin{align*}
   \phi_*(-):\mathcal{D}^b(\mmod\Gamma)\rightarrow \mathcal{D}^b(\mmod\Phi) 
\end{align*}
has both a left and a right adjoint. Hence $\phi_*(\mathcal{D}^b(\mmod\Gamma))\subseteq \mathcal{D}^b(\mmod\Phi)$ is functorially finite by \cite[Lemma 4.3]{HJV} and its dual. Then, in order to prove that property (\ref{property1}) holds, it is enough to show that $(\mathcal{R}^c_\mathcal{S})^\perp=\phi_*(\mathcal{D}^b(\mmod\Gamma))$.
Given $X\in\mathcal{D}^b(\mmod\Phi)$ and $Y\in\mathcal{D}^b(\mmod\Gamma)$, since $\phi^*(-)=-\otimes^L_\Phi\Gamma$ is the left adjoint of $\phi_*(-)$, we have that
\begin{align*}
    \Hom_{\mathcal{D}^b(\mmod\Phi)}(X,\phi_*(Y))\cong \Hom _{\mathcal{D}^b(\mmod\Gamma)}(X\otimes^L_\Phi\Gamma,Y).
\end{align*}
Then, $X\in {}^\perp(\phi_*(\mathcal{D}^b(\mmod\Gamma))$ if and only if $X\otimes^L_\Phi \Gamma=0$.
Since $\mathcal{D}^b(\mmod\Phi)$ is idempotent complete, $\mathcal{D}^b(\mmod\Phi)/\mathcal{R}^c_\mathcal{S}$ is idempotent complete. Moreover, because of our setup, we can equivalently write $\mathcal{D}^{\perf} (\Phi)$ or $\mathcal{D}^b (\mmod\Phi)$. So, by Theorem \ref{thmC} we have canonical factorization of  $-\otimes^L_\Phi\Gamma: \mathcal{D}^b(\mmod\Phi)\rightarrow \mathcal{D}^b(\mmod\Gamma)$ as
\begin{align*}
    \mathcal{D}^b(\mmod\Phi)\xrightarrow{\pi} \mathcal{D}^b (\mmod\Phi)/\mathcal{R}^c_\mathcal{S}\xrightarrow{T}\mathcal{D}^b(\mmod\Gamma),
\end{align*}
where $\pi$ is the Verdier quotient map and $T$ is an equivalence of categories by Remark \ref{remarkD}. Hence $X\otimes^L_\Phi \Gamma=0$ if and only if $\pi(X)=0$ and so $X\in{}^\perp(\phi_*(\mathcal{D}^b(\mmod\Gamma))$ if and only if  $\pi(X)=0$.
Since $\pi(\mathcal{R}^c_\mathcal{S})=0$, by \cite[Proposition II.2.3.1(d)(d)bis]{VJ}, we conclude that $^\perp(\phi_*(\mathcal{D}^b(\mmod\Gamma))=\mathcal{R}^c_\mathcal{S}$. 
 Hence $(\mathcal{R}^c_\mathcal{S})^\perp= \phi_*(\mathcal{D}^b(\mmod\Gamma))$ and property (\ref{property1}) holds.
\end{proof}

\subsection{From homological epimorphisms to universal localizations}
We start with a homological epimorphism of $d$-homological pairs $(\Phi, \mathcal{F})\xrightarrow{\phi}(\Gamma, \mathcal{G})$ and show $\phi$ is the universal localization of $(\Phi, \mathcal{F})$ with respect to some $\mathcal{U}$.

\begin{lemma}\label{lemma_univ_intersection}
Let  $(\Phi, \mathcal{F})\xrightarrow{\phi}(\Gamma, \mathcal{G})$ be a homological epimorphism of $d$-homological pairs and
\begin{align*}
    \mathcal{U}= {}^\perp(\phi_*(\mathcal{D}^b(\mmod\Gamma))).
\end{align*}
Then $\mathcal{U}^\perp\cap\overline{\mathcal{F}}=\phi_*(\overline{\mathcal{G}})$ and it is a functorially finite wide subcategory of $\overline{\mathcal{F}}$.
\end{lemma}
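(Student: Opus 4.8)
The plan is to identify $\mathcal{U}^\perp$ with $\phi_*(\mathcal{D}^b(\mmod\Gamma))$ and then intersect with $\overline{\mathcal{F}}$, transporting the higher structure through the fully faithful functor $\phi_*$. First I would observe that by Lemma \ref{lemma_tstructure} the pair $(\mathcal{U},\phi_*(\mathcal{D}^b(\mmod\Gamma)))$ is a stable $t$-structure, so $\mathcal{U}^\perp = \phi_*(\mathcal{D}^b(\mmod\Gamma))$ (the right aisle of a stable $t$-structure is recovered as the right perpendicular of the left aisle). Hence
\begin{align*}
    \mathcal{U}^\perp\cap\overline{\mathcal{F}} = \phi_*(\mathcal{D}^b(\mmod\Gamma))\cap\overline{\mathcal{F}}.
\end{align*}
Next I would show this equals $\phi_*(\overline{\mathcal{G}})$. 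The inclusion $\phi_*(\overline{\mathcal{G}})\subseteq\phi_*(\mathcal{D}^b(\mmod\Gamma))$ is clear, and $\phi_*(\overline{\mathcal{G}})\subseteq\overline{\mathcal{F}}$ follows because $\overline{\mathcal{G}}=\add\{\Sigma^{di}\mathcal{G}\}$, $\phi_*$ commutes with $\Sigma$, and $\phi_*(\mathcal{G})\subseteq\mathcal{F}$ by the definition of homological epimorphism of $d$-homological pairs, whence $\phi_*(\Sigma^{di}\mathcal{G})=\Sigma^{di}\phi_*(\mathcal{G})\subseteq\Sigma^{di}\mathcal{F}\subseteq\overline{\mathcal{F}}$; additivity of $\phi_*$ and closure of $\overline{\mathcal{F}}$ under summands finishes this direction. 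For the reverse inclusion, take $x\in\phi_*(\mathcal{D}^b(\mmod\Gamma))\cap\overline{\mathcal{F}}$; since $\phi_*$ is fully faithful, $x\cong\phi_*(y)$ for a unique $y\in\mathcal{D}^b(\mmod\Gamma)$, and I must show $y\in\overline{\mathcal{G}}$. Here I would use the characterization of $\overline{\mathcal{G}}$ as a $d$-cluster tilting subcategory of $\mathcal{D}^b(\mmod\Gamma)$ (via \cite[Theorem 1.21]{I} applied to $(\Gamma,\mathcal{G})$), i.e.\ $y\in\overline{\mathcal{G}}$ iff $\Ext^{1\ldots d-1}_{\mathcal{D}^b(\mmod\Gamma)}(\overline{\mathcal{G}},y)=0$; translating $\Ext$-groups across the fully faithful triangulated functor $\phi_*$ and using that $x\in\overline{\mathcal{F}}$ kills the relevant $\Ext$'s against $\phi_*(\overline{\mathcal{G}})\subseteq\overline{\mathcal{F}}$, I get the vanishing. (Alternatively, and perhaps more cleanly, I would show directly that $\phi^*$ restricts to a functor $\overline{\mathcal{F}}\to\overline{\mathcal{G}}$ left adjoint to $\phi_*|_{\overline{\mathcal{G}}}$, and that any $x$ as above satisfies $x\cong\phi_*\phi^*(x)$ with $\phi^*(x)\in\overline{\mathcal{G}}$.)

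It remains to verify that $\mathcal{W}:=\phi_*(\overline{\mathcal{G}})$ is a functorially finite wide subcategory of $\overline{\mathcal{F}}$ in the sense of Definition \ref{defn_wide_angulated}. For wideness: condition (b), closure under $\Sigma^{\pm d}$, is immediate since $\overline{\mathcal{G}}$ is closed under $\Sigma^{\pm d}$ and $\phi_*$ commutes with $\Sigma$. For condition (a), given $\delta:w\to\Sigma^d w'$ with $w,w'\in\mathcal{W}$, write $w=\phi_*(\bar g)$, $w'=\phi_*(\bar g')$ with $\bar g,\bar g'\in\overline{\mathcal{G}}$; full faithfulness of $\phi_*$ produces $\bar\delta:\bar g\to\Sigma^d\bar g'$ in $\mathcal{D}^b(\mmod\Gamma)$ with $\phi_*(\bar\delta)=\delta$, and since $\overline{\mathcal{G}}$ is a $(d+2)$-angulated category (being $d$-cluster tilting and closed under $\Sigma^d$, by \cite[Theorem 1]{GKO}) there is a $(d+2)$-angle $\bar g'\to\bar g_d\to\cdots\to\bar g_1\to\bar g\xrightarrow{\bar\delta}\Sigma^d\bar g'$ with all $\bar g_i\in\overline{\mathcal{G}}$; applying the triangulated functor $\phi_*$ gives the desired $(d+2)$-angle in $\overline{\mathcal{F}}$ with terms in $\mathcal{W}$ (using that $\phi_*$ sends $(d+2)$-angles of $\overline{\mathcal{F}}$'s ambient structure — these being certain triangulated-category diagrams — to $(d+2)$-angles, which holds because the $(d+2)$-angulated structure on $\overline{\mathcal{F}}$ is the one induced by \cite{GKO} from the triangulated structure on $\mathcal{D}^b$ and $\phi_*$ is triangulated). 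For functorial finiteness: by property (\ref{property1}) in Definition \ref{defn_higher_univ_loc}, $\mathcal{U}^\perp$ is functorially finite in $\mathcal{D}^b(\mmod\Phi)$, and by property (\ref{property2}) every object of $\mathcal{U}^\perp$ has its $\overline{\mathcal{F}}$-cover and $\overline{\mathcal{F}}$-envelope landing in $\mathcal{U}^\perp\cap\overline{\mathcal{F}}=\mathcal{W}$; combining these — precompose an $\overline{\mathcal{F}}$-object's $\mathcal{U}^\perp$-precover/preenvelope with the $\mathcal{W}$-cover/envelope of that $\mathcal{U}^\perp$-object — yields $\mathcal{W}$-precovers and preenvelopes for every object of $\overline{\mathcal{F}}$, so $\mathcal{W}$ is functorially finite in $\overline{\mathcal{F}}$.

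I expect the main obstacle to be the reverse inclusion $\phi_*(\mathcal{D}^b(\mmod\Gamma))\cap\overline{\mathcal{F}}\subseteq\phi_*(\overline{\mathcal{G}})$: one must show that an object of $\mathcal{D}^b(\mmod\Gamma)$ which becomes an object of $\overline{\mathcal{F}}$ after restriction of scalars already lies in $\overline{\mathcal{G}}$. This is where the interplay between the $d$-cluster tilting characterizations on both sides and the exactness/full-faithfulness of $\phi_*$ has to be used carefully; the cleanest route is likely via the adjoint $\phi^*$ and the fact that $\phi^*(\overline{\mathcal{F}})\subseteq\overline{\mathcal{G}}$ together with the counit $\phi^*\phi_*\Rightarrow\mathrm{id}$ being an isomorphism (since $\phi$ is a homological epimorphism, $\phi_*$ is fully faithful, so this counit is invertible), giving $x\cong\phi_*\phi^*(x)$ for $x$ in the image, and $\phi^*(x)\in\phi^*(\overline{\mathcal{F}})\subseteq\overline{\mathcal{G}}$. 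Verifying $\phi^*(\overline{\mathcal{F}})\subseteq\overline{\mathcal{G}}$ itself reduces, via $\overline{\mathcal{F}}=\add\{\Sigma^{di}\mathcal{F}\}$ and $\phi^*$ commuting with $\Sigma$, to $\phi^*(\mathcal{F})\subseteq\overline{\mathcal{G}}$, which should follow from the $\Ext$-vanishing characterization of $\overline{\mathcal{G}}$ and adjunction $(\phi^*,\phi_*)$ together with $\phi_*(\mathcal{G})\subseteq\mathcal{F}$.
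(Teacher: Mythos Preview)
Your identification $\mathcal{U}^\perp\cap\overline{\mathcal{F}}=\phi_*(\overline{\mathcal{G}})$ is essentially the paper's argument: the paper also uses that $\overline{f}\cong\phi_*\phi^*(\overline{f})$ for $\overline{f}\in\mathcal{U}^\perp$ together with $\phi^*(\overline{\mathcal{F}})\subseteq\overline{\mathcal{G}}$, the latter reduced via $\Sigma$-compatibility to $\phi^*(\mathcal{F})\subseteq\mathcal{G}$, which the paper obtains by citing \cite[Proposition~5.5]{HJV} rather than by your adjunction/$\Ext$-vanishing sketch.

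The genuine gap is in your treatment of functorial finiteness (and, to a lesser extent, wideness). You invoke ``property~(\ref{property1})'' and ``property~(\ref{property2})'' from Definition~\ref{defn_higher_univ_loc} as if they were hypotheses on $\mathcal{U}$, but in this lemma $\mathcal{U}$ is simply \emph{defined} as ${}^\perp(\phi_*(\mathcal{D}^b(\mmod\Gamma)))$ and nothing is assumed about it. Those two properties are what Lemmas~\ref{lemma_funct_fin} and~\ref{lemma_prop2} later establish, and the proof of Lemma~\ref{lemma_prop2} actually \emph{uses} the equality $\mathcal{U}^\perp\cap\overline{\mathcal{F}}=\phi_*(\overline{\mathcal{G}})$ from the present lemma. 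So your argument for functorial finiteness is, at best, a forward reference through a chain of lemmas that partially depend on what you are proving; as written it reads as an unjustified assumption.

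The paper avoids this entirely by a different and much shorter route: it first observes $\phi_*(\overline{\mathcal{G}})=\overline{\phi_*(\mathcal{G})}$ (since $\phi_*$ is triangulated and the image is summand-closed), then cites \cite[Proposition~6.2]{HJV} to get that $\phi_*(\mathcal{G})\subseteq\mathcal{F}$ is functorially finite and wide, and finally applies the bijection of Proposition~\ref{lemma_ab} to lift this to $\overline{\phi_*(\mathcal{G})}\subseteq\overline{\mathcal{F}}$. This handles both wideness and functorial finiteness in one stroke, without any appeal to properties~(\ref{property1}) and~(\ref{property2}) and without the direct $(d+2)$-angle transport you outline. Your direct wideness argument is plausible but unnecessary, and your functorial finiteness argument needs to be replaced.
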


\begin{proof}
Since $\phi_*:\mathcal{D}^b(\mmod\Gamma)\rightarrow\mathcal{D}^b(\mmod\Phi)$ is triangulated by Remark \ref{remark_fullfaithful} and $\phi_*(\overline{\mathcal{G}})$ is closed under direct summands, we have that
\begin{align*}
    \phi_*(\overline{\mathcal{G}})=\phi_*(\text{add }(\Sigma^{d\mathbb{Z}}\mathcal{G}))=\text{add } (\Sigma^{d\mathbb{Z}}\phi_*(\mathcal{G}))=\overline{\phi_*(\mathcal{G})}.
\end{align*}
Moreover, $\phi_*(\mathcal{G})\subseteq\mathcal{F}$ is functorially finite and wide by \cite[Proposition 6.2]{HJV}. Then, by Proposition \ref{lemma_ab} we have that $\phi_*(\overline{\mathcal{G}})=\overline{\phi_*(\mathcal{G})}\subseteq \overline{\mathcal{F}}$ is functorially finite and wide.

It remains to show that $\mathcal{U}^\perp\cap\overline{\mathcal{F}}=\phi_*(\overline{\mathcal{G}})$. Recall that $\overline{\mathcal{G}}\subseteq \mathcal{D}^b(\mmod\Gamma)$, so that
\begin{align*}
    \phi_*(\overline{\mathcal{G}})\subseteq\phi_*(\mathcal{D}^b(\mmod\Gamma))=\mathcal{U}^\perp
\end{align*}
and the inclusion $\phi_*(\overline{\mathcal{G}})\subseteq \mathcal{U}^\perp\cap\overline{\mathcal{F}}$ is clear.

Consider now $\overline{f}\in\mathcal{U}^\perp\cap\overline{\mathcal{F}}$. By \cite[Lemma 4.3]{HJV}, we have that 
\begin{align*}
    \overline{f}\rightarrow \phi_*\phi^*(\overline{f})
\end{align*}
is a strong $\mathcal{U}^\perp$-envelope. But since $\overline{f}\in\mathcal{U}^\perp$, we have that $\overline{f}\cong \phi_*\phi^*(\overline{f})$ and it is enough to show that $\phi_*\phi^*(\overline{f})\in\phi_*(\overline{\mathcal{G}})$.

Recall that by Remark \ref{remark_fullfaithful}, $\phi_*:\mathcal{D}^b(\mmod\Gamma)\rightarrow\mathcal{D}^b(\mmod\Phi)$ is triangulated and it has left adjoint functor $\phi^*:\mathcal{D}^b(\mmod\Phi)\rightarrow\mathcal{D}^b(\mmod\Gamma)$. Then, by \cite[Lemma 5.3.6]{N}, $\phi^*$ is also a triangulated functor and
\begin{align*}
    \phi^*(\overline{\mathcal{F}})=\phi^*(\text{add }(\Sigma^{d\mathbb{Z}}\mathcal{F}))=\text{add } (\Sigma^{d\mathbb{Z}}\phi^*(\mathcal{F}))=\overline{\phi^*(\mathcal{F})}.
\end{align*}
Since $\phi^*(\mathcal{F})\subseteq\mathcal{G}$ by \cite[Proposition 5.5]{HJV}, we have that $\phi^*(\overline{\mathcal{F}})\subseteq\overline{\mathcal{G}}$. In particular, $\phi_*\phi^*(\overline{f})\in\phi_*(\overline{\mathcal{G}})$ as we wished to prove.
\end{proof}

\begin{lemma}\label{lemma_funct_fin}
Let  $(\Phi, \mathcal{F})\xrightarrow{\phi}(\Gamma, \mathcal{G})$ be a homological epimorphism of $d$-homological pairs and
\begin{align*}
    \mathcal{U}= {}^\perp(\phi_*(\mathcal{D}^b(\mmod\Gamma))).
\end{align*}
Then $\mathcal{U}^\perp$ is functorially finite in $\mathcal{D}^b(\mmod\Phi)$.
\end{lemma}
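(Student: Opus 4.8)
The plan is to exploit the stable $t$-structure $(\mathcal{U}, \phi_*(\mathcal{D}^b(\mmod\Gamma)))$ from Lemma \ref{lemma_tstructure} together with Lemma \ref{lemma_univ_intersection}. Recall from Lemma \ref{lemma_tstructure} that $\mathcal{U}^\perp = \phi_*(\mathcal{D}^b(\mmod\Gamma))$, and that this subcategory is \emph{strongly enveloping} in $\mathcal{D}^b(\mmod\Phi)$ by \cite[Lemma 4.3]{HJV} applied to the adjoint pair $(\phi^*, \phi_*)$. So the preenveloping half of functorial finiteness is immediate: every $x \in \mathcal{D}^b(\mmod\Phi)$ has the envelope $\eta_x : x \to \phi_*\phi^*(x)$. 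The work is entirely in producing $\mathcal{U}^\perp$-precovers.

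**Key steps.** First I would recall that $\phi_*(\mathcal{D}^b(\mmod\Gamma)) = \mathcal{U}^\perp$ is a triangulated subcategory (Remark \ref{remark_fullfaithful}, Lemma \ref{lemma_U_wide}-style argument) that is moreover \emph{covering}: this follows by the dual of \cite[Lemma 4.3]{HJV}, using that $\phi_*$ also has a \emph{right} adjoint $\phi^!(-) = R\Hom_\Phi(\Gamma, -)$ in our finite-global-dimension setup (Remark \ref{remark_fullfaithful}). Concretely, the counit $\varepsilon_x : \phi_*\phi^!(x) \to x$ is a strong $\mathcal{U}^\perp$-precover of every $x \in \mathcal{D}^b(\mmod\Phi)$. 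Since $\mathcal{D}^b(\mmod\Phi)$ is $\Hom$-finite (finitely generated modules over a finite-dimensional algebra) and Krull–Schmidt, every precover can be refined to a cover by splitting off a maximal direct summand on which it restricts to zero; minimality is automatic there. Thus $\mathcal{U}^\perp = \phi_*(\mathcal{D}^b(\mmod\Gamma))$ is precovering. Combined with the enveloping half above, $\mathcal{U}^\perp$ is functorially finite in $\mathcal{D}^b(\mmod\Phi)$.

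**Main obstacle.** The only subtlety is confirming that $\phi_*$ really does have a right adjoint $\phi^!$ as a functor $\mathcal{D}^b(\mmod\Gamma) \to \mathcal{D}^b(\mmod\Phi)$ that lands in the bounded derived category of \emph{finitely generated} modules — this is where $\mathrm{gldim}\,\Phi \leq d$ (hence $\Gamma$ perfect as a $\Phi$-complex, via the homological epimorphism) is used, since otherwise $R\Hom_\Phi(\Gamma, -)$ need not preserve boundedness or finite generation. Once that is in place, the argument is a formal consequence of \cite[Lemma 4.3]{HJV} and its dual: a triangulated subcategory that is the essential image of a functor admitting both a left and a right adjoint is automatically functorially finite, exactly as invoked in the proof of Proposition \ref{remark_basecase_coincides}. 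I would simply cite \cite[Lemma 4.3]{HJV} and its dual applied to the adjunctions $(\phi^*, \phi_*)$ and $(\phi_*, \phi^!)$ from Remark \ref{remark_fullfaithful}, concluding that $\mathcal{U}^\perp = \phi_*(\mathcal{D}^b(\mmod\Gamma))$ is both preenveloping and precovering, hence functorially finite.
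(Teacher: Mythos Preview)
Your proposal is correct and follows essentially the same approach as the paper: identify $\mathcal{U}^\perp = \phi_*(\mathcal{D}^b(\mmod\Gamma))$, then apply \cite[Lemma 4.3]{HJV} and its dual to the adjunctions $(\phi^*,\phi_*)$ and $(\phi_*,\phi^!)$ from Remark \ref{remark_fullfaithful} to conclude that $\mathcal{U}^\perp$ is both (strongly) enveloping and (strongly) covering. The paper's proof is just a terser version of what you wrote, and your extra remarks on why $\phi^!$ lands in $\mathcal{D}^b(\mmod\Gamma)$ and on refining precovers to covers are sound but not strictly needed for the argument.
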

\begin{proof}
By construction, we have that $\mathcal{U}^\perp=\phi_*(\mathcal{D}^b(\mmod\Gamma))$. As seen in Remark \ref{remark_fullfaithful}, $\phi_*:\mathcal{D}^b(\mmod\Gamma)\rightarrow\mathcal{D}^b(\mmod\Phi)$ has left and right adjoint functors. Then, by \cite[Lemma 4.3]{HJV} and its dual, we have that $\mathcal{U}^\perp$ is both strongly enveloping and strongly covering in $\mathcal{D}^b(\mmod\Phi)$. In particular, $\mathcal{U}^\perp$ is functorially finite in $\mathcal{D}^b(\mmod\Phi)$.
\end{proof}

\begin{lemma}\label{lemma_prop2}
Let  $(\Phi, \mathcal{F})\xrightarrow{\phi}(\Gamma, \mathcal{G})$ be a homological epimorphism of $d$-homological pairs and
\begin{align*}
    \mathcal{U}= {}^\perp(\phi_*(\mathcal{D}^b(\mmod\Gamma))).
\end{align*}
Then each object in $\mathcal{U}^\perp$ has its $\overline{\mathcal{F}}$-cover and its $\overline{\mathcal{F}}$-envelope in $\mathcal{U}^\perp\cap\overline{\mathcal{F}}$.
\end{lemma}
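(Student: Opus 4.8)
The key observation is that, by construction, $\mathcal{U}^\perp = \phi_*(\mathcal{D}^b(\mmod\Gamma))$, and by Lemma \ref{lemma_univ_intersection} we already know $\mathcal{U}^\perp \cap \overline{\mathcal{F}} = \phi_*(\overline{\mathcal{G}})$. So the task reduces to showing: for every object $x \in \phi_*(\mathcal{D}^b(\mmod\Gamma))$, both an $\overline{\mathcal{F}}$-cover and an $\overline{\mathcal{F}}$-envelope of $x$ (taken in $\mathcal{D}^b(\mmod\Phi)$) already lie in $\phi_*(\overline{\mathcal{G}})$. First I would fix $x = \phi_*(y)$ for some $y \in \mathcal{D}^b(\mmod\Gamma)$. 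The plan is to transport the situation to $\mathcal{D}^b(\mmod\Gamma)$: since $\overline{\mathcal{G}} \subseteq \mathcal{D}^b(\mmod\Gamma)$ is a $d$-cluster tilting subcategory (by \cite[Theorem 1.21]{I} applied to $(\Gamma, \mathcal{G})$), it is in particular functorially finite, so $y$ has a $\overline{\mathcal{G}}$-cover $g \to y$ in $\mathcal{D}^b(\mmod\Gamma)$.

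\textbf{Main step.} The heart of the argument is to check that applying the fully faithful triangulated functor $\phi_*$ to a $\overline{\mathcal{G}}$-cover $g \to y$ produces an $\overline{\mathcal{F}}$-cover $\phi_*(g) \to \phi_*(y)$ of $x = \phi_*(y)$. First, $\phi_*(g) \in \phi_*(\overline{\mathcal{G}}) = \overline{\phi_*(\mathcal{G})} \subseteq \overline{\mathcal{F}}$, using $\phi_*(\mathcal{G}) \subseteq \mathcal{F}$ and the computation $\phi_*(\overline{\mathcal{G}}) = \overline{\phi_*(\mathcal{G})}$ from the proof of Lemma \ref{lemma_univ_intersection}. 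For the precovering property: given any $\overline{f} \to x$ with $\overline{f} \in \overline{\mathcal{F}}$, I would use that $\overline{f} \to \phi_*\phi^*(\overline{f})$ is an isomorphism when $\overline{f} \in \mathcal{U}^\perp$ — but $\overline{f}$ need not be in $\mathcal{U}^\perp$, so instead I factor through the unit: the map $\overline{f} \to x = \phi_*(y)$ corresponds under adjunction $(\phi^*, \phi_*)$ to a map $\phi^*(\overline{f}) \to y$ in $\mathcal{D}^b(\mmod\Gamma)$, and since $\phi^*(\overline{\mathcal{F}}) \subseteq \overline{\mathcal{G}}$ (shown in the proof of Lemma \ref{lemma_univ_intersection}), we have $\phi^*(\overline{f}) \in \overline{\mathcal{G}}$, so this map factors through the $\overline{\mathcal{G}}$-cover $g \to y$; applying $\phi_*$ and precomposing with the unit $\overline{f} \to \phi_*\phi^*(\overline{f})$ gives the desired factorization through $\phi_*(g) \to x$. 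Right minimality of $\phi_*(g) \to \phi_*(y)$ follows because $\phi_*$ is fully faithful, so $\End(\phi_*(g)) \cong \End(g)$ compatibly with composition, and an endomorphism $\varphi$ of $\phi_*(g)$ fixing the cover map corresponds to an endomorphism of $g$ fixing $g \to y$, hence an isomorphism. The envelope statement is entirely dual, using instead a $\overline{\mathcal{G}}$-envelope $y \to g'$ and the right adjoint $\phi^!$; one needs $\phi^!(\overline{\mathcal{F}}) \subseteq \overline{\mathcal{G}}$, which should follow by a dual argument to \cite[Proposition 5.5]{HJV} or can be extracted similarly.

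\textbf{Expected obstacle.} I anticipate the main subtlety is the precovering (and pre-enveloping) verification: one must be careful that the adjunction factorization interacts correctly with the unit/counit so that the factorizing map really does land through $\phi_*(g)$ and composes back to the original morphism. A clean way to package this is to note that $\phi_*$ restricts to an equivalence $\mathcal{D}^b(\mmod\Gamma) \xrightarrow{\sim} \phi_*(\mathcal{D}^b(\mmod\Gamma)) = \mathcal{U}^\perp$, and that every morphism $\overline{f} \to x$ with target in $\mathcal{U}^\perp$ factors uniquely through the strong $\mathcal{U}^\perp$-envelope $\overline{f} \to \phi_*\phi^*(\overline{f})$ (by Lemma \ref{lemma_tstructure} and \cite[Lemma 4.3]{HJV}); combined with $\phi^*(\overline{f}) \in \overline{\mathcal{G}}$ this reduces the precovering check to the known fact that $\overline{\mathcal{G}}$ is (pre)covering inside $\mathcal{D}^b(\mmod\Gamma)$. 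Using this stable $t$-structure formulation should make both the factorization and the minimality arguments essentially formal, avoiding any delicate diagram chase.
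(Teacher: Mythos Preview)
Your argument is correct, and it takes a genuinely different route from the paper's proof.

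The paper also starts from a $\overline{\mathcal{G}}$-cover $\overline{g}_0 \to y$ of $y$ with $x=\phi_*(y)$, but then shows that $\phi_*(\overline{g}_0)\to x$ is an $\overline{\mathcal{F}}$-precover by analysing the cone: it builds a tower of triangles from iterated $\overline{\mathcal{G}}$-covers so that the cone $c_1$ lies in $\overline{\mathcal{G}}*\Sigma\overline{\mathcal{G}}*\cdots*\Sigma^{d-2}\overline{\mathcal{G}}$, whence $\Sigma\phi_*(c_1)\in \Sigma\overline{\mathcal{F}}*\cdots*\Sigma^{d-1}\overline{\mathcal{F}}$ and $\Hom_{\mathcal{D}^b(\mmod\Phi)}(\overline{\mathcal{F}},\Sigma\phi_*(c_1))=0$ by the $d$-cluster tilting property of $\overline{\mathcal{F}}$; every map from $\overline{\mathcal{F}}$ to $x$ then has zero composite into the cone and factors through $\phi_*(\overline{g}_0)$. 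You bypass the tower entirely: using the adjunction $(\phi^*,\phi_*)$ together with $\phi^*(\overline{\mathcal{F}})\subseteq\overline{\mathcal{G}}$ (already established in Lemma~\ref{lemma_univ_intersection}), you transport the precovering problem to $\mathcal{D}^b(\mmod\Gamma)$ where it is immediate, and recover minimality from full faithfulness of $\phi_*$. Your approach is shorter and makes the role of the adjunctions transparent; the paper's approach is more self-contained in that the envelope half is literally the dual computation, whereas in your version the envelope side needs $\phi^!(\overline{\mathcal{F}})\subseteq\overline{\mathcal{G}}$. That inclusion does hold and is as easy as the one for $\phi^*$: for $\overline{f}\in\overline{\mathcal{F}}$ and $\overline{g}\in\overline{\mathcal{G}}$ one has $\Hom_{\mathcal{D}^b(\mmod\Gamma)}(\overline{g},\Sigma^i\phi^!(\overline{f}))\cong\Hom_{\mathcal{D}^b(\mmod\Phi)}(\phi_*(\overline{g}),\Sigma^i\overline{f})=0$ for $1\le i\le d-1$ since $\phi_*(\overline{g})\in\overline{\mathcal{F}}$, so $\phi^!(\overline{f})\in\overline{\mathcal{G}}$ by the $d$-cluster tilting description of $\overline{\mathcal{G}}$. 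You should state this explicitly rather than leave it as ``should follow by a dual argument''.
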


\begin{proof}
We only prove that each object in $\mathcal{U}^\perp$ has its $\overline{\mathcal{F}}$-cover in $\mathcal{U}^\perp\cap\overline{\mathcal{F}}$. The corresponding statement on $\overline{\mathcal{F}}$-envelopes follows by a dual argument.

Consider an arbitrary element $x$ in $\mathcal{U}^\perp=\phi_*(\mathcal{D}^b(\mmod\Gamma))$, that is $x=\phi_*(y)$ for some $y\in \mathcal{D}^b(\mmod\Gamma)$.
Since $\overline{\mathcal{G}}\subseteq\mathcal{D}^b(\mmod\Gamma)$ is $d$-cluster tilting, we can build a tower of triangles in $\mathcal{D}^b(\mmod\Gamma)$ of the form
\begin{align*}
\begin{gathered}
\xymatrix@!0 @C=3em @R=3em{
& \overline{g}_{d-2}\ar[rd]\ar[rr] && \overline{g}_{d-3}\ar[rd]\ar[r]&&\cdots&\ar[r]& \overline{g}_{1}\ar[rr]\ar[rd]&&\overline{g}_{0} \ar[rd]
\\
\overline{g}_{d-1}\ar[ru]&& c_{d-2}\ar@{~>}[ll]\ar[ru]&& c_{d-3}\ar@{~>}[ll]&\cdots& c_2\ar[ru]&& c_1 \ar@{~>}[ll]\ar[ru]&& y,\ar@{~>}[ll]
}
\end{gathered}
\end{align*}
where $\overline{g}_i\in\overline{\mathcal{G}}$ and, letting $c_0:=y$, $\overline{g}_i\rightarrow c_i$ is a $\overline{\mathcal{G}}$-cover for each $0\leq i\leq d-2$. Note that
\begin{align*}
    c_1\in\overline{\mathcal{G}}*\Sigma \overline{\mathcal{G}}*\cdots *\Sigma^{d-2}\overline{\mathcal{G}}.
\end{align*}
Applying the triangulated functor $\phi_*$ to the rightmost triangle in the tower, we obtain the triangle in $\mathcal{D}^b(\mmod\Phi)$
\begin{align*}
    \phi_*(c_1)\rightarrow \phi_*(\overline{g}_0)\xrightarrow{\gamma} x\xrightarrow{\xi} \Sigma\phi_*(c_1),
\end{align*}
where it is easy to see that $\gamma$ is a $\phi_*(\overline{\mathcal{G}})$-cover. Since $\phi_*(\overline{\mathcal{G}})\subseteq\overline{\mathcal{F}}$, we have that
\begin{align*}
    \Sigma \phi_*(c_1)\in\Sigma\overline{\mathcal{G}}*\Sigma^2 \overline{\mathcal{G}}*\cdots *\Sigma^{d-1}\overline{\mathcal{G}}\subseteq \Sigma\overline{\mathcal{F}}*\Sigma^2 \overline{\mathcal{F}}*\cdots *\Sigma^{d-1}\overline{\mathcal{F}}.
\end{align*}
Hence, since $\overline{\mathcal{F}}\subseteq \mathcal{D}^b(\mmod\Phi)$ is $d$-cluster tilting, we have that
\begin{align*}
    \Hom_{\mathcal{D}^b(\mmod\Phi)}(\overline{\mathcal{F}}, \Sigma \phi_*(c_1))=0.
\end{align*}
Then, given any morphism $\alpha:\overline{f}\rightarrow x$ with $\overline{f}\in\overline{\mathcal{F}}$, we have that $\xi\circ\alpha=0$ and so $\alpha$ factors through $\gamma$. This proves that $\gamma$ is an $\overline{\mathcal{F}}$-cover of $x$. Since $x$ was chosen arbitrarily in $\mathcal{U}^\perp$, we conclude that every object in $\mathcal{U}^\perp$ has its $\overline{\mathcal{F}}$-cover in $\phi_*(\overline{\mathcal{G}})$. This completes the proof because $\mathcal{U}^\perp\cap\overline{\mathcal{F}}=\phi_*(\overline{\mathcal{G}})$ by Lemma \ref{lemma_univ_intersection}.
\end{proof}

We prove there is an injection of the set \ref{itemc} into the set \ref{itemd} of Theorem \ref{thm}.
\begin{proposition}\label{proposition_homoepi_to_univloc}
There is an injection
\begin{align*}
   \Bigg\{
    \begin{matrix} \text{equivalence classes of homological}\\ \text{ epimorphisms of $d$-homological pairs} \\ (\Phi, \mathcal{F})\rightarrow(\Gamma, \mathcal{G}) \end{matrix}
    \Bigg\} 
    \xrightarrow{\Psi}
    \Bigg\{
    \begin{matrix} \text{equivalence classes of}\\ \text{universal localizations} \\ \text{of } (\Phi,\mathcal{F}) \end{matrix}
    \Bigg\}
    \end{align*}
sending a homological epimorphism of $d$-homological pairs $(\Phi, \mathcal{F})\xrightarrow{\phi}(\Gamma, \mathcal{G})$ to itself. In fact, $\Phi\xrightarrow{\phi}\Gamma$ is the universal localization of $(\Phi,\mathcal{F})$ with respect to ${}^\perp(\phi_*(\mathcal{D}^b(\mmod\Gamma)))$. 
\end{proposition}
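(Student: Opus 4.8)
The plan is to assemble the preceding lemmas: the substantive content of the statement is that, for a homological epimorphism of $d$-homological pairs $(\Phi,\mathcal{F})\xrightarrow{\phi}(\Gamma,\mathcal{G})$, the wide subcategory $\mathcal{U}={}^\perp(\phi_*(\mathcal{D}^b(\mmod\Gamma)))$ is an admissible choice in Definition \ref{defn_higher_univ_loc} and $\phi$ realises the associated universal localization; everything else is checking that this recipe descends to, and is injective on, equivalence classes. First I would note that since a homological epimorphism of $d$-homological pairs is in particular a homological epimorphism of $k$-algebras, Lemma \ref{lemma_U_wide} applies and $\mathcal{U}$ is a wide subcategory of $\mathcal{D}^b(\mmod\Phi)$. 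By Lemma \ref{lemma_funct_fin} it satisfies property (\ref{property1}), and by Lemma \ref{lemma_prop2} it satisfies property (\ref{property2}), so $\mathcal{U}$ is admissible in the sense of Definition \ref{defn_higher_univ_loc}. Then Lemma \ref{lemma_initial_prop} says precisely that $\Phi\xrightarrow{\phi}\Gamma$ is initial among algebra morphisms $\Phi\to\Lambda$ with $\mathcal{U}\otimes^L_\Phi\Lambda=0$; combined with the uniqueness of universal localizations noted after Definition \ref{defn_higher_univ_loc}, this identifies $\phi$ itself with the universal localization of $(\Phi,\mathcal{F})$ with respect to $\mathcal{U}$. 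Hence $\Psi$, sending $\phi$ to itself, lands in the target set.

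Next I would check that $\Psi$ descends to equivalence classes: if two homological epimorphisms of $d$-homological pairs $(\Phi,\mathcal{F})\xrightarrow{\phi}(\Gamma,\mathcal{G})$ and $(\Phi,\mathcal{F})\xrightarrow{\phi'}(\Gamma',\mathcal{G}')$ are equivalent, then in particular $\phi$ and $\phi'$ are equivalent as algebra morphisms, hence as universal localizations, so the class of $\Psi(\phi)$ is well defined. For injectivity, suppose $\Psi$ sends these two to equivalent universal localizations, i.e. there is an algebra isomorphism $\psi\colon\Gamma\xrightarrow{\sim}\Gamma'$ with $\psi\circ\phi=\phi'$. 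The induced triangle equivalence $\psi_*\colon\mathcal{D}^b(\mmod\Gamma')\xrightarrow{\sim}\mathcal{D}^b(\mmod\Gamma)$ satisfies $\phi'_*=\phi_*\circ\psi_*$, so $\phi_*(\mathcal{D}^b(\mmod\Gamma))=\phi'_*(\mathcal{D}^b(\mmod\Gamma'))$ as subcategories of $\mathcal{D}^b(\mmod\Phi)$, and hence the two associated $\mathcal{U}$ coincide; consequently so do the categories $\mathcal{U}^\perp$.

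To finish injectivity I would invoke Lemma \ref{lemma_univ_intersection}, in whose proof one has $\phi_*(\overline{\mathcal{G}})=\overline{\phi_*(\mathcal{G})}$, to obtain
\begin{align*}
\overline{\phi_*(\mathcal{G})}=\mathcal{U}^\perp\cap\overline{\mathcal{F}}=\overline{\phi'_*(\mathcal{G}')}.
\end{align*}
By the injectivity of the bijection in Proposition \ref{lemma_ab} (which sends $\mathcal{W}\mapsto\overline{\mathcal{W}}$) this forces $\phi_*(\mathcal{G})=\phi'_*(\mathcal{G}')$ as functorially finite wide subcategories of $\mathcal{F}$, and then by the injectivity of the bijection in Proposition \ref{lemma_ac} the two homological epimorphisms of $d$-homological pairs lie in the same equivalence class. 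Hence $\Psi$ is injective.

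The main obstacle is not really technical — all of the hard work is already packaged in Lemmas \ref{lemma_U_wide}, \ref{lemma_funct_fin}, \ref{lemma_prop2}, \ref{lemma_initial_prop} and \ref{lemma_univ_intersection}, so the proof is mostly bookkeeping. The one point requiring care is the injectivity argument, where one must reconstruct the extra datum $\phi_*(\mathcal{G})\subseteq\mathcal{F}$ of the pair morphism from $\mathcal{U}$ alone; this works because $\mathcal{U}^\perp\cap\overline{\mathcal{F}}=\overline{\phi_*(\mathcal{G})}$ and the map $\mathcal{W}\leftrightarrow\overline{\mathcal{W}}$ is a bijection, but one has to be attentive to which equivalence relation is in play on each side — isomorphism of algebra morphisms for universal localizations versus the relation on pair morphisms, which by Proposition \ref{lemma_ac} is detected by $\phi_*(\mathcal{G})$.
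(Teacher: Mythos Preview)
Your proof is correct and its core is identical to the paper's: both assemble Lemmas \ref{lemma_U_wide}, \ref{lemma_funct_fin}, \ref{lemma_prop2} and \ref{lemma_initial_prop} to verify that $\mathcal{U}={}^\perp(\phi_*(\mathcal{D}^b(\mmod\Gamma)))$ is admissible and that $\phi$ is the universal localization with respect to it. The only difference is in the treatment of injectivity. The paper dispatches this in one line --- ``if the map between sets is well-defined, then it is clearly an injection'' --- relying on the fact that $\Psi$ sends $\phi$ to itself as an algebra morphism. You instead give an explicit argument recovering $\phi_*(\mathcal{G})$ from $\mathcal{U}$ via Lemma \ref{lemma_univ_intersection} and then invoking the bijections of Propositions \ref{lemma_ab} and \ref{lemma_ac}. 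Your route is more careful, and in particular it transparently rules out the possibility that the same underlying algebra morphism $\phi$ might support two inequivalent $d$-homological pair structures $(\Gamma,\mathcal{G})$ and $(\Gamma,\mathcal{G}')$; the paper's ``clearly'' leaves this implicit.
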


\begin{proof}
Let  $(\Phi, \mathcal{F})\xrightarrow{\phi}(\Gamma, \mathcal{G})$ be a homological epimorphism of $d$-homological pairs and
\begin{align*}
    \mathcal{U}= {}^\perp(\phi_*(\mathcal{D}^b(\mmod\Gamma))).
\end{align*}
We show that $\phi: \Phi\rightarrow \Gamma$ is the universal localization of $(\Phi, \mathcal{F})$ with respect to $\mathcal{U}$. Note that this is enough, because if the map between sets is well-defined, then it is clearly an injection.

We have that $\mathcal{U}\subseteq\mathcal{D}^b(\mmod\Phi)$ is a wide subcategory by Lemma \ref{lemma_U_wide}. Property (\ref{property1}) from Definition \ref{defn_higher_univ_loc} is satisfied by Lemma \ref{lemma_funct_fin} and property (\ref{property2}) by Lemma \ref{lemma_prop2}. Moreover, by Lemma \ref{lemma_initial_prop}, $\phi$ is initial in the category of algebra morphisms $\Phi\rightarrow\Lambda$ such that $\mathcal{U}\otimes^L_\Phi\Lambda=0$. Hence, $\phi$ is the universal localization of $(\Phi,\mathcal{F})$ with respect to $\mathcal{U}$.
\end{proof}

\subsection{From universal localizations to homological epimorphisms}
 For the rest of the section, assume that $\mathcal{U}\subseteq\mathcal{D}^b(\mmod \Phi)$ is a wide subcategory satisfying properties (\ref{property1}) and (\ref{property2}) from Definition \ref{defn_higher_univ_loc}. We prove that the map from Proposition \ref{proposition_homoepi_to_univloc} is surjective and that the universal localization of $(\Phi,\mathcal{F})$ with respect to $\mathcal{U}$ exists.

\begin{lemma}\label{lemma_U_triangulated}
The subcategory $\mathcal{U}^\perp\subseteq \mathcal{D}^b(\mmod\Phi)$ is triangulated.
\end{lemma}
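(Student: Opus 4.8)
The plan is to show that $\mathcal{U}^\perp$ is closed under $\Sigma^{\pm 1}$ and under extensions (taking cones), using only the fact that $\mathcal{U}$ is a subcategory closed under $\Sigma^{\pm 1}$ (which it is, being wide) together with the homological-functor properties of $\Hom$. First I would recall that for any object $x\in\mathcal{U}^\perp$ and any $u\in\mathcal{U}$ we have $\Hom(u,x)=0$, and that since $\mathcal{U}$ is wide it is closed under $\Sigma^{\pm 1}$; hence $\Hom(u,\Sigma^{\pm 1}x)\cong\Hom(\Sigma^{\mp 1}u,x)=0$ because $\Sigma^{\mp 1}u\in\mathcal{U}$. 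This gives closure of $\mathcal{U}^\perp$ under the shift automorphism.

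Next I would handle extensions. Given a triangle $a\to b\to c\to\Sigma a$ in $\mathcal{D}^b(\mmod\Phi)$ with $a,c\in\mathcal{U}^\perp$, apply $\Hom(u,-)$ for an arbitrary $u\in\mathcal{U}$ to obtain the long exact sequence
\begin{align*}
\cdots\to\Hom(u,a)\to\Hom(u,b)\to\Hom(u,c)\to\cdots.
\end{align*}
Since the outer terms vanish, the middle term $\Hom(u,b)$ vanishes as well, so $b\in\mathcal{U}^\perp$. Together with closure under direct summands (which is immediate: a summand of an object orthogonal to all of $\mathcal{U}$ is again orthogonal to all of $\mathcal{U}$), this verifies all the axioms in the definition of a triangulated subcategory, so $\mathcal{U}^\perp$ is triangulated.

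I do not anticipate a genuine obstacle here; the statement is essentially the dual of the perpendicularity argument already carried out in Lemma \ref{lemma_U_wide}, where it was shown that the \emph{left} perpendicular of a triangulated subcategory is wide. The only point requiring a word of care is that one must know $\mathcal{U}$ is closed under $\Sigma^{\pm 1}$ in order to conclude $\mathcal{U}^\perp$ is — and this is guaranteed because $\mathcal{U}$ is assumed wide in the sense recalled in the introduction. One could alternatively phrase the whole proof abstractly: for any subcategory $\mathcal{V}$ closed under $\Sigma^{\pm 1}$, the right perpendicular $\mathcal{V}^\perp$ is a triangulated subcategory, being the intersection over $v\in\mathcal{V}$ of the kernels of the homological functors $\Hom(v,-)$ (each such kernel is triangulated and closed under summands), and I would likely present it in this clean form.
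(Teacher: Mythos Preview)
Your proof is correct and essentially identical to the paper's own argument: the paper also first uses that $\mathcal{U}$ is closed under $\Sigma^{\pm 1}$ to deduce $\Sigma^i v\in\mathcal{U}^\perp$ via $\Hom(u,\Sigma^i v)\cong\Hom(\Sigma^{-i}u,v)=0$, and then proves closure under extensions by applying $\Hom(u,-)$ to a triangle with end terms in $\mathcal{U}^\perp$. The only cosmetic difference is that the paper phrases the second step as verifying the ``2 out of 3'' property (which follows from shift-closure plus closure under extensions), while you also spell out closure under summands and offer an abstract reformulation.
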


\begin{proof}
Given $v\in\mathcal{U}^\perp$, $u\in\mathcal{U}$ and $i\in\mathbb{Z}$, we have that 
\begin{align*}
    \Hom_{\mathcal{D}^b(\mmod\Phi)}(u, \Sigma^i v)\cong 
    \Hom_{\mathcal{D}^b(\mmod\Phi)}(\Sigma^{-i} u, v)=0,
\end{align*}
where the last equality holds because $\mathcal{U}$ is closed under $\Sigma^{\pm 1}$. Hence $\Sigma^i v\in \mathcal{U}^\perp$ and $\mathcal{U}^\perp$ is closed under $\Sigma^{\pm 1}$.

In order to prove the 2 out of 3 property, it is then enough to prove closure under extensions. Let
\begin{align*}
    y\rightarrow t\rightarrow x\rightarrow \Sigma y
\end{align*}
be a triangle in $\mathcal{D}^b(\mmod\Phi)$ with $y$ and $x$ in $\mathcal{U}^\perp$. Given $u\in\mathcal{U}$, we have an exact sequence of the form 
\begin{align*}
   \cdots\rightarrow \Hom_{\mathcal{D}^b(\mmod\Phi)}(u,y)
   \rightarrow \Hom_{\mathcal{D}^b(\mmod\Phi)}(u,t)
   \rightarrow \Hom_{\mathcal{D}^b(\mmod\Phi)}(u,x)\rightarrow\cdots.
\end{align*}
Note that the first and the last term of the above sequence are zero, then so is the second one and we conclude that $t\in\mathcal{U}^\perp$. Hence $\mathcal{U}^\perp\subseteq \mathcal{D}^b(\mmod\Phi)$ is closed under extensions and triangulated.
\end{proof}

\begin{lemma}\label{lemma_UUperp_stablet}
The pair $(\mathcal{U}, \mathcal{U}^\perp)$ is a stable $t$-structure in the sense of \cite[definition 9.14]{M}.
\end{lemma}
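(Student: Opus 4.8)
Recall from \cite[Definition 9.14]{M} that a stable $t$-structure on $\mathcal{D}^b(\mmod\Phi)$ is a pair $(\mathcal{X},\mathcal{Y})$ of full triangulated subcategories, both closed under isomorphisms, such that $\Hom(\mathcal{X},\mathcal{Y})=0$ and every object $z$ of $\mathcal{D}^b(\mmod\Phi)$ sits in a triangle $x\to z\to y\to \Sigma x$ with $x\in\mathcal{X}$ and $y\in\mathcal{Y}$ (equivalently, $\mathcal{X}*\mathcal{Y}=\mathcal{D}^b(\mmod\Phi)$). The plan is to verify these conditions in turn for the pair $(\mathcal{U},\mathcal{U}^\perp)$, reusing the work already done for the special case in Lemmas~\ref{lemma_U_wide}, \ref{lemma_tstructure} and \ref{lemma_U_triangulated}.

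First I would note that $\mathcal{U}$ is triangulated: by hypothesis it is a wide subcategory of $\mathcal{D}^b(\mmod\Phi)$, hence closed under $\Sigma^{\pm 1}$, direct summands and extensions, so the argument in the proof of Lemma~\ref{lemma_U_wide} (the long exact $\Hom$ sequence giving the $2$-out-of-$3$ property) shows it is a triangulated subcategory. Similarly $\mathcal{U}^\perp$ is a triangulated subcategory by Lemma~\ref{lemma_U_triangulated}. Both are full and closed under isomorphisms by construction. The orthogonality condition $\Hom_{\mathcal{D}^b(\mmod\Phi)}(\mathcal{U},\mathcal{U}^\perp)=0$ is immediate from the definition of $\mathcal{U}^\perp$ as the right perpendicular category of $\mathcal{U}$.

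The one substantive point is the gluing condition $\mathcal{U}*\mathcal{U}^\perp=\mathcal{D}^b(\mmod\Phi)$; this is where property~(\ref{property1}) enters and is the main obstacle. Since $\mathcal{U}^\perp$ is functorially finite in $\mathcal{D}^b(\mmod\Phi)$ by property~(\ref{property1}), every object $z$ admits a $\mathcal{U}^\perp$-preenvelope $z\xrightarrow{\eta} w$ with $w\in\mathcal{U}^\perp$. Complete it to a triangle $u\to z\xrightarrow{\eta} w\to \Sigma u$. It remains to check $u\in\mathcal{U}$, i.e. $\Hom_{\mathcal{D}^b(\mmod\Phi)}(u,\mathcal{U}^\perp)=0$: this follows because $\mathcal{U}^\perp$ is a triangulated subcategory closed under extensions and direct summands, so applying $\Hom(-,w')$ for $w'\in\mathcal{U}^\perp$ to the triangle and using that $\Hom(z,w')\to\Hom(\text{precones})\to\cdots$ is controlled by the (pre)envelope — concretely, by the dual of \cite[Lemma 2.1]{PJ} as used in the proof of Lemma~\ref{lemma_tstructure} — one gets $\Hom(u,\mathcal{U}^\perp)=0$. (Equivalently, one may invoke \cite[Lemma 4.3]{HJV} style reasoning: a $\mathcal{U}^\perp$-preenvelope that fits into such a triangle automatically has cone-of-its-cocone in $^\perp(\mathcal{U}^\perp)=\mathcal{U}$.) Hence $z\in\mathcal{U}*\mathcal{U}^\perp$, and since $z$ was arbitrary, $\mathcal{U}*\mathcal{U}^\perp=\mathcal{D}^b(\mmod\Phi)$.

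Putting these together, $(\mathcal{U},\mathcal{U}^\perp)$ satisfies all the axioms and is a stable $t$-structure in the sense of \cite[Definition 9.14]{M}. I expect the proof to be short, essentially assembling Lemmas~\ref{lemma_U_wide}, \ref{lemma_U_triangulated} and the preenveloping half of property~(\ref{property1}) exactly as in Lemma~\ref{lemma_tstructure}; the only place requiring care is confirming that the cocone of a $\mathcal{U}^\perp$-preenvelope lands in $\mathcal{U}$ rather than merely in $^\perp(\text{something smaller})$, which is why fullness and closure under summands/extensions of $\mathcal{U}^\perp$ (from Lemma~\ref{lemma_U_triangulated}) must be cited.
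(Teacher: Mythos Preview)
Your proposal is correct and follows essentially the same approach as the paper: note that $\mathcal{U}$ is wide (hence triangulated) by assumption and that $\mathcal{U}^\perp$ is triangulated by Lemma~\ref{lemma_U_triangulated}, then use property~(\ref{property1}) to take a $\mathcal{U}^\perp$-(pre)envelope of an arbitrary object, complete it to a triangle, and apply the dual of \cite[Lemma 2.1]{PJ} (using that $\mathcal{U}^\perp$ is full and closed under extensions and summands) to conclude the cocone lies in $\mathcal{U}$. The paper's proof is terser but the structure and the cited inputs are identical.
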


\begin{proof}
We have that $\mathcal{U}$ is a wide subcategory by assumption and by Lemma \ref{lemma_U_triangulated}, $\mathcal{U}^\perp$ is a triangulated subcategory. It remains to show that $\mathcal{U}*\mathcal{U}^\perp =\mathcal{D}^b(\mmod\Phi)$. Let $x\in\mathcal{D}^b(\mmod\Phi)$. Since $\mathcal{U}^\perp$ has property (\ref{property1}), there exists a $\mathcal{U}^\perp$-envelope of $x$, say $\xi:x\rightarrow u$. Complete $\xi$ to a triangle:
\begin{align*}
    y\rightarrow x\xrightarrow{\xi} u\rightarrow \Sigma y.
\end{align*}
Since $\mathcal{U}^\perp\subseteq\mathcal{D}^b(\mmod\Phi)$ is full, closed under extensions and direct summands, by the dual of \cite[Lemma 2.1]{PJ} we have that
\begin{align*}
    \Hom_{\mathcal{D}^b(\mmod\Phi)}(y, \mathcal{U}^\perp)=0.
\end{align*}
Hence $y\in\mathcal{U}$ and $\mathcal{U}*\mathcal{U}^\perp =\mathcal{D}^b(\mmod\Phi)$. 
\end{proof}

\begin{lemma}\label{lemma_ffinite}
The subcategory $\overline{\mathcal{F}}\cap\mathcal{U}^\perp$ is functorially finite in $\overline{\mathcal{F}}$.
\end{lemma}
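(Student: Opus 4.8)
The plan is to prove the lemma directly, by showing that every object of $\overline{\mathcal{F}}$ admits both a $(\overline{\mathcal{F}}\cap\mathcal{U}^\perp)$-precover and a $(\overline{\mathcal{F}}\cap\mathcal{U}^\perp)$-preenvelope, via a two-step ``composition of (pre)covers'' argument that simply splices together properties (\ref{property1}) and (\ref{property2}) of $\mathcal{U}$. Write $\mathcal{X}:=\overline{\mathcal{F}}\cap\mathcal{U}^\perp$; this is clearly a full additive subcategory of $\overline{\mathcal{F}}$ (and of $\mathcal{D}^b(\mmod\Phi)$), and one has the inclusions $\mathcal{X}\subseteq\mathcal{U}^\perp$ and $\mathcal{X}\subseteq\overline{\mathcal{F}}$, which are what make the factorizations below go through. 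Since $\overline{\mathcal{F}}$ is a full subcategory, all factorizations may be computed in $\mathcal{D}^b(\mmod\Phi)$.

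For precovers, fix $\overline{f}\in\overline{\mathcal{F}}$. By property (\ref{property1}), $\mathcal{U}^\perp$ is precovering in $\mathcal{D}^b(\mmod\Phi)$, so there is a $\mathcal{U}^\perp$-precover $\pi\colon v\to\overline{f}$ with $v\in\mathcal{U}^\perp$. By property (\ref{property2}), the object $v$ has an $\overline{\mathcal{F}}$-cover $\sigma\colon w\to v$ with $w\in\mathcal{U}^\perp\cap\overline{\mathcal{F}}=\mathcal{X}$. I claim the composite $\pi\sigma\colon w\to\overline{f}$ is an $\mathcal{X}$-precover of $\overline{f}$: given any $\alpha\colon w'\to\overline{f}$ with $w'\in\mathcal{X}$, the fact that $w'\in\mathcal{U}^\perp$ and that $\pi$ is a $\mathcal{U}^\perp$-precover yields a factorization $\alpha=\pi\beta$ with $\beta\colon w'\to v$; the fact that $w'\in\overline{\mathcal{F}}$ and that $\sigma$ is in particular an $\overline{\mathcal{F}}$-precover of $v$ yields a factorization $\beta=\sigma\gamma$ with $\gamma\colon w'\to w$; hence $\alpha=(\pi\sigma)\gamma$, as required, and $w\in\mathcal{X}$.

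For preenvelopes the argument is exactly dual: by property (\ref{property1}) there is a $\mathcal{U}^\perp$-preenvelope $\xi\colon\overline{f}\to v$ with $v\in\mathcal{U}^\perp$; by property (\ref{property2}) the object $v$ has an $\overline{\mathcal{F}}$-envelope $\rho\colon v\to w$ with $w\in\mathcal{X}$; and the same two-step factorization (first through $\xi$, using $w'\in\mathcal{U}^\perp$, then through $\rho$, using $w'\in\overline{\mathcal{F}}$) shows that $\rho\xi\colon\overline{f}\to w$ is an $\mathcal{X}$-preenvelope. Thus $\mathcal{X}=\overline{\mathcal{F}}\cap\mathcal{U}^\perp$ is both precovering and preenveloping in $\overline{\mathcal{F}}$, i.e.\ functorially finite.

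The only point requiring care is bookkeeping of which ambient category each (pre)cover refers to: $\pi$ is a precover with respect to $\mathcal{U}^\perp$ inside all of $\mathcal{D}^b(\mmod\Phi)$, whereas $\sigma$ (resp.\ $\rho$) is a cover (resp.\ envelope) with respect to $\overline{\mathcal{F}}$ but only of the single object $v$. It is precisely property (\ref{property2}) — that the $\overline{\mathcal{F}}$-cover and $\overline{\mathcal{F}}$-envelope of an object of $\mathcal{U}^\perp$ again lie in $\mathcal{U}^\perp$ — that forces the composite to land in $\mathcal{X}$, and the inclusion $\mathcal{X}\subseteq\mathcal{U}^\perp$ that legitimizes the first factorization step. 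No minimality is used, so no genuine obstacle is expected; this is essentially the whole proof.
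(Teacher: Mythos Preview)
Your proof is correct and follows essentially the same approach as the paper's own proof: compose a $\mathcal{U}^\perp$-precover of $\overline{f}$ with an $\overline{\mathcal{F}}$-(pre)cover of the resulting object whose domain lies in $\mathcal{U}^\perp\cap\overline{\mathcal{F}}$ by property~(\ref{property2}), then verify the two-step factorization. The only cosmetic difference is that the paper first invokes functorial finiteness of $\overline{\mathcal{F}}$ to produce an $\overline{\mathcal{F}}$-precover and then cites property~(\ref{property2}) to place its domain in the intersection, whereas you go straight to the cover guaranteed by property~(\ref{property2}); both are the same argument.
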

\begin{proof}
We only prove that $\overline{\mathcal{F}}\cap\mathcal{U}^\perp$ is precovering in $\overline{\mathcal{F}}$, the proof that it is preenveloping is dual.
Let $\overline{f}\in\overline{\mathcal{F}}\subseteq \mathcal{D}^b(\mmod\Phi)$. By property (\ref{property1}), there exists a $\mathcal{U}^\perp$-precover of $\overline{f}$, say $\nu: v\rightarrow \overline{f}$. Since $\overline{\mathcal{F}}\subseteq \mathcal{D}^b(\mmod\Phi)$ is functorially finite, there is an $\overline{\mathcal{F}}$-precover of $v$, say $\epsilon: \overline{e}\rightarrow v$. By property (\ref{property2}), $\overline{e}\in\overline{\mathcal{F}}\cap\mathcal{U}^\perp$. Let $z\in\overline{\mathcal{F}}\cap\mathcal{U}^\perp$ and $\zeta: z\rightarrow \overline{f}$. Then, since $\nu$ is a $\mathcal{U}^\perp$-precover, there exists a morphism $\alpha: z\rightarrow v$ such that $\nu\circ\alpha=\zeta$. Since $\epsilon$ is an $\overline{\mathcal{F}}$-precover, then there is a morphism $\beta:z\rightarrow \overline{e}$ such that $\epsilon\circ\beta=\alpha$. Hence
\begin{align*}
    \nu\circ\epsilon\circ\beta=\nu\circ\alpha=\zeta
\end{align*}
and we conclude that $\nu\circ\epsilon$ is a $(\overline{\mathcal{F}}\cap\mathcal{U}^\perp)$-precover and $\overline{\mathcal{F}}\cap\mathcal{U}^\perp$ is precovering.
\end{proof}

\begin{lemma}\label{lemma_wide}
The subcategory $\overline{\mathcal{F}}\cap\mathcal{U}^\perp$ is wide in $\overline{\mathcal{F}}$.
\end{lemma}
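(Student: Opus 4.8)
**Proof plan for Lemma \ref{lemma_wide} (that $\overline{\mathcal{F}}\cap\mathcal{U}^\perp$ is wide in $\overline{\mathcal{F}}$).**

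The plan is to verify the two conditions of Definition \ref{defn_wide_angulated} for the additive subcategory $\mathcal{W}:=\overline{\mathcal{F}}\cap\mathcal{U}^\perp$ of the $(d+2)$-angulated category $\overline{\mathcal{F}}$. Condition (b) is immediate: $\overline{\mathcal{F}}$ is closed under $\Sigma^{\pm d}$ by construction, and $\mathcal{U}^\perp$ is closed under $\Sigma^{\pm 1}$ (hence under $\Sigma^{\pm d}$) by the argument in the proof of Lemma \ref{lemma_U_triangulated}, so $\Sigma^{\pm d}(\mathcal{W})\subseteq\mathcal{W}$. One should also note $\mathcal{W}$ is genuinely additive: it is closed under direct sums and summands since both $\overline{\mathcal{F}}$ and $\mathcal{U}^\perp$ are.

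The substance is condition (a): given a morphism $\delta:w\rightarrow\Sigma^d w'$ in $\overline{\mathcal{F}}$ with $w,w'\in\mathcal{W}$, one must produce a $(d+2)$-angle $w'\rightarrow w_d\rightarrow\cdots\rightarrow w_1\rightarrow w\xrightarrow{\delta}\Sigma^d w'$ in $\overline{\mathcal{F}}$ with all $w_i\in\mathcal{W}$. First I would invoke the fact that $\overline{\mathcal{F}}$ is a $(d+2)$-angulated category: by axiom (N1) (after suitably rotating, using (N2), so that $\delta$ appears as the last map) there exists \emph{some} $(d+2)$-angle $w'\rightarrow f_d\rightarrow\cdots\rightarrow f_1\rightarrow w\xrightarrow{\delta}\Sigma^d w'$ with $f_i\in\overline{\mathcal{F}}$. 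The goal is then to replace the $f_i$ one at a time by objects in $\mathcal{W}=\overline{\mathcal{F}}\cap\mathcal{U}^\perp$, using that $\mathcal{W}$ is functorially finite in $\overline{\mathcal{F}}$ (Lemma \ref{lemma_ffinite}) together with the stable $t$-structure $(\mathcal{U},\mathcal{U}^\perp)$ in $\mathcal{D}^b(\mmod\Phi)$ (Lemma \ref{lemma_UUperp_stablet}). Concretely: since $(\mathcal{U},\mathcal{U}^\perp)$ is a stable $t$-structure, every object $x$ of $\mathcal{D}^b(\mmod\Phi)$ sits in a functorial triangle $u_x\rightarrow x\rightarrow x_{\mathcal{U}^\perp}\rightarrow\Sigma u_x$ with $u_x\in\mathcal{U}$ and $x_{\mathcal{U}^\perp}\in\mathcal{U}^\perp$. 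Applying this truncation to each $f_i$ and using that $w,w'\in\mathcal{U}^\perp$ are already fixed, one runs the standard ``obtain a wide subcategory from a functorially finite torsion-type pair'' argument: inductively, the map $f_1\rightarrow w$ can be factored through its $\mathcal{W}$-cover (or through $(f_1)_{\mathcal{U}^\perp}$), the cone is then again in the relevant subcategory because $\overline{\mathcal{F}}\cap\mathcal{U}^\perp$ is closed under the operations forced by the stable $t$-structure, and one proceeds along the angle. I expect the cleanest route is to mirror the proof of \cite[Theorem A]{HJV} / \cite[Definition 2.11]{HJV}, or to transport the wideness of $\mathcal{U}^\perp\cap\overline{\mathcal{F}}$ across the bijection of Proposition \ref{lemma_ab}: show $\overline{\mathcal{F}}\cap\mathcal{U}^\perp=\overline{\mathcal{W}_0}$ for $\mathcal{W}_0:=\mathcal{F}\cap\mathcal{U}^\perp$ a functorially finite wide subcategory of $\mathcal{F}$, so that wideness in $\overline{\mathcal{F}}$ follows formally.

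The main obstacle I anticipate is the replacement step — turning an arbitrary $(d+2)$-angle with terms in $\overline{\mathcal{F}}$ into one with terms in $\mathcal{W}$ while keeping the two outer terms $w,w'$ fixed. This requires carefully chaining the truncation triangles of the stable $t$-structure with the octahedral-type axiom (N4) of the $(d+2)$-angulated structure, checking at each stage that the intermediate cones land in $\mathcal{U}^\perp$ (using that $\mathcal{U}^\perp$ is triangulated, Lemma \ref{lemma_U_triangulated}) and that their $\overline{\mathcal{F}}$-covers land back in $\mathcal{W}$ (using property (\ref{property2}) via Lemma \ref{lemma_ffinite} and its proof). The key point making this work is that $\mathcal{U}^\perp$ is triangulated and $\mathcal{W}$ is functorially finite in $\overline{\mathcal{F}}$, so no genuinely new input beyond the preceding lemmas is needed; it is a bookkeeping argument, and I would present it by reducing to Proposition \ref{lemma_ab} if at all possible to avoid the diagram chase.
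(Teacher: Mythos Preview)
Your proposal identifies all the right ingredients --- closure under $\Sigma^{\pm d}$ is fine, and you correctly flag that the core input is property (\ref{property2}) together with $\mathcal{U}^\perp$ being triangulated (Lemma \ref{lemma_U_triangulated}). But your main strategy for condition (a) is more convoluted than necessary, and the ``replacement'' step you describe is not clearly workable: starting from an arbitrary $(d+2)$-angle with terms $f_i\in\overline{\mathcal{F}}$ and then trying to swap each $f_i$ for something in $\mathcal{W}$ via $t$-structure truncations and axiom (N4) runs into the problem that $(f_i)_{\mathcal{U}^\perp}$ need not lie in $\overline{\mathcal{F}}$, and there is no clean mechanism in a $(d+2)$-angulated category for replacing interior terms of an angle one at a time while keeping the endpoints $w,w'$ fixed. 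Your fallback of transporting through Proposition \ref{lemma_ab} is circular: you would first need $\mathcal{F}\cap\mathcal{U}^\perp$ wide in $\mathcal{F}$, which is the same problem one level down, and you would also need $\overline{\mathcal{F}\cap\mathcal{U}^\perp}=\overline{\mathcal{F}}\cap\mathcal{U}^\perp$, which itself requires an argument.

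The paper's approach avoids all of this by building the correct $(d+2)$-angle from the outset rather than repairing an arbitrary one. Given $\alpha:a\to\Sigma^d b$ with $a,b\in\overline{\mathcal{F}}\cap\mathcal{U}^\perp$, one constructs the $(d+2)$-angle as a tower of triangles in $\mathcal{D}^b(\mmod\Phi)$ where each step takes an $\overline{\mathcal{F}}$-cover $\overline{f}^i\to y^i$ of the current cone $y^i$ (this is the standard way $(d+2)$-angles arise from a $d$-cluster tilting subcategory of a triangulated category). Since $a,\Sigma^d b\in\mathcal{U}^\perp$ and $\mathcal{U}^\perp$ is triangulated, the first cone $y^{d-1}$ lies in $\mathcal{U}^\perp$; then property (\ref{property2}) says its $\overline{\mathcal{F}}$-cover $\overline{f}^{d-1}$ already lies in $\overline{\mathcal{F}}\cap\mathcal{U}^\perp$. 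Iterating, every $\overline{f}^i$ lands in $\mathcal{W}$ automatically. No (N4), no replacement, no reduction to Proposition \ref{lemma_ab}: property (\ref{property2}) is applied \emph{directly} to the cones in the tower, which is exactly what it is designed for.
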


\begin{proof}
Since both $\mathcal{U}^\perp$ and $\overline{\mathcal{F}}$ are closed under direct sums and direct summands, then so is $\overline{\mathcal{F}}\cap\mathcal{U}^\perp$. Moreover, $\mathcal{U}^{\perp}$ is closed under $\Sigma^{\pm 1}$ by Lemma \ref{lemma_UUperp_stablet} and $\overline{\mathcal{F}}$ is closed under $\Sigma^{\pm d}$ because it is $(d+2)$-angulated. Hence $\overline{\mathcal{F}}\cap\mathcal{U}^\perp$ is closed under $\Sigma^{\pm d}$.

It remains to show that $\overline{\mathcal{F}}\cap\mathcal{U}^\perp$ is closed under $d$-extensions.
Let $\alpha: a\rightarrow\Sigma^d b$ be a morphism in $\overline{\mathcal{F}}\cap\mathcal{U}^\perp$. Since $\overline{\mathcal{F}}\subseteq\mathcal{D}^b(\mmod\Phi)$ is $d$-cluster tilting, we can build a tower of triangles in $\mathcal{D}^b(\mmod\Phi)$ of the form

\begin{align*}
\begin{gathered}
\xymatrix@!0 @C=3em @R=3em{
& \overline{f}^{1}\ar[rd]\ar[rr] && \overline{f}^{2}\ar[rd]\ar[r]&&\cdots&\ar[r]& \overline{f}^{d-1}\ar[rr]\ar[rd]&& a \ar[rd]
\\
\overline{f}^{0}\ar[ru]&& y^{1}\ar@{~>}[ll]\ar[ru]&& y^{2}\ar@{~>}[ll]&\cdots& y^{d-2}\ar[ru]&& y^{d-1} \ar@{~>}[ll]\ar[ru]&& \Sigma^db,\ar@{~>}[ll]
}
\end{gathered}
\end{align*}
where $\overline{f}^i\in\overline{\mathcal{F}}$ and $\overline{f}^i\rightarrow y^i$ is an $\overline{\mathcal{F}}$-cover for each $1\leq i\leq d-1$. 
This gives a $(d+2)$-angle in $\overline{\mathcal{F}}$
\begin{align*}
    \epsilon:\,\,\overline{f}^0\rightarrow \overline{f}^1\rightarrow\cdots\rightarrow \overline{f}^{d-1}\rightarrow a\xrightarrow{\alpha} \Sigma^d b\rightarrow\Sigma^d \overline{f}^0.
\end{align*}
Since $a$ and $\Sigma^d b$ are in $\mathcal{U}^\perp$ and $\mathcal{U}^\perp$ is a triangulated subcategory of $\mathcal{D}^b(\mmod\Phi)$ by Lemma \ref{lemma_U_triangulated}, then $y^{d-1}\in\mathcal{U}^\perp$. By property (\ref{property2}), we then have that $\overline{f}^{d-1}\in\overline{\mathcal{F}}\cap\mathcal{U}^\perp$. Repeating the above argument, we deduce that also
\begin{align*}
    \overline{f}^{d-2}, \overline{f}^{d-3}, \dots, \overline{f}^{1}, \overline{f}^{0}\in \overline{\mathcal{F}}\cap\mathcal{U}^\perp.
\end{align*}
Hence $\epsilon$ is a $(d+2)$-angle in $\overline{\mathcal{F}}$ with all of its objects in $\overline{\mathcal{F}}\cap\mathcal{U}^\perp$ and so is its rotation:
\begin{align*}
   b\rightarrow\overline{f}^0\rightarrow \overline{f}^1\rightarrow\cdots\rightarrow \overline{f}^{d-1}\rightarrow a\xrightarrow{\alpha} \Sigma^d b
\end{align*}
and we conclude that $\overline{\mathcal{F}}\cap\mathcal{U}^\perp$ is closed under $d$-extensions.
\end{proof}

\begin{proposition}\label{prop_map_surjective}
The map $\Psi$ from Proposition \ref{proposition_homoepi_to_univloc} is a surjection.
\end{proposition}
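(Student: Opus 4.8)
The plan is to invert the construction of the previous subsection. Fix a wide subcategory $\mathcal{U}\subseteq\mathcal{D}^b(\mmod\Phi)$ satisfying properties (\ref{property1}) and (\ref{property2}); the goal is to produce a homological epimorphism of $d$-homological pairs whose image under $\Psi$ is the universal localization of $(\Phi,\mathcal{F})$ with respect to $\mathcal{U}$. Since universal localizations with respect to a fixed subcategory are unique up to isomorphism, it suffices to realise that localization as (the algebra part of) a homological epimorphism of $d$-homological pairs; this also establishes its existence, as needed for Remark \ref{prop_univ_loc_exists}.

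First I would feed $\mathcal{U}$ through the machinery already in place. By Lemmas \ref{lemma_ffinite} and \ref{lemma_wide}, $\overline{\mathcal{F}}\cap\mathcal{U}^\perp$ is a functorially finite wide subcategory of $\overline{\mathcal{F}}$, hence by Proposition \ref{lemma_ab} of the form $\overline{\mathcal{W}}$ for a unique functorially finite wide subcategory $\mathcal{W}$ of $\mathcal{F}$. Applying the construction of Remark \ref{remark_construction6.3} to $\mathcal{W}$ yields a homological epimorphism of $d$-homological pairs $(\Phi,\mathcal{F})\xrightarrow{\phi}(\Gamma,\mathcal{G})$ with $\phi_*(\mathcal{G})=\mathcal{W}$, so that $\phi_*(\overline{\mathcal{G}})=\overline{\phi_*(\mathcal{G})}=\overline{\mathcal{W}}=\overline{\mathcal{F}}\cap\mathcal{U}^\perp$. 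Set $\mathcal{U}':={}^\perp(\phi_*(\mathcal{D}^b(\mmod\Gamma)))$. By Proposition \ref{proposition_homoepi_to_univloc} the algebra morphism $\Phi\xrightarrow{\phi}\Gamma$ is the universal localization of $(\Phi,\mathcal{F})$ with respect to $\mathcal{U}'$, and $\Psi$ sends the class of $\phi$ to the class of this localization; moreover $\mathcal{U}'^\perp\cap\overline{\mathcal{F}}=\phi_*(\overline{\mathcal{G}})=\overline{\mathcal{F}}\cap\mathcal{U}^\perp$ by Lemma \ref{lemma_univ_intersection}. It therefore suffices to prove $\mathcal{U}^\perp=\mathcal{U}'^\perp$: then $\mathcal{U}={}^\perp(\mathcal{U}^\perp)={}^\perp(\mathcal{U}'^\perp)=\mathcal{U}'$, since $(\mathcal{U},\mathcal{U}^\perp)$ and $(\mathcal{U}',\mathcal{U}'^\perp)$ are stable $t$-structures (Lemmas \ref{lemma_UUperp_stablet} and \ref{lemma_tstructure}), whence $\phi$ is the universal localization with respect to $\mathcal{U}$ and surjectivity follows.

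The crux is a reconstruction statement: every wide subcategory $\mathcal{V}\subseteq\mathcal{D}^b(\mmod\Phi)$ satisfying (\ref{property1}) and (\ref{property2}) satisfies $\mathcal{V}^\perp=\mathrm{thick}(\overline{\mathcal{F}}\cap\mathcal{V}^\perp)$, where $\mathrm{thick}(-)$ is the smallest triangulated subcategory closed under direct summands containing $-$. Both $\mathcal{U}$ and $\mathcal{U}'$ are of this kind ($\mathcal{U}'$ is wide by Lemma \ref{lemma_U_wide} and has (\ref{property1}), (\ref{property2}) by Lemmas \ref{lemma_funct_fin} and \ref{lemma_prop2}), so combining this with $\overline{\mathcal{F}}\cap\mathcal{U}^\perp=\overline{\mathcal{F}}\cap\mathcal{U}'^\perp$ gives $\mathcal{U}^\perp=\mathcal{U}'^\perp$. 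The inclusion $\mathrm{thick}(\overline{\mathcal{F}}\cap\mathcal{V}^\perp)\subseteq\mathcal{V}^\perp$ is immediate, since $\mathcal{V}^\perp$ is triangulated (Lemma \ref{lemma_U_triangulated}) and closed under summands. For the reverse I would take $x\in\mathcal{V}^\perp$ and build the tower of triangles attached to the $d$-cluster tilting subcategory $\overline{\mathcal{F}}\subseteq\mathcal{D}^b(\mmod\Phi)$, exactly as in the proof of Lemma \ref{lemma_prop2} with $\overline{\mathcal{F}}$ in place of $\overline{\mathcal{G}}$: iterated $\overline{\mathcal{F}}$-covers $\overline{f}_i\to x_i$ with $x_0=x$ and $x_{i+1}$ the cocone, closing up after $d-1$ steps with $x_{d-1}\in\overline{\mathcal{F}}$ because $\mathcal{D}^b(\mmod\Phi)=\overline{\mathcal{F}}*\Sigma\overline{\mathcal{F}}*\cdots*\Sigma^{d-1}\overline{\mathcal{F}}$. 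Property (\ref{property2}) forces each cover $\overline{f}_i$ into $\overline{\mathcal{F}}\cap\mathcal{V}^\perp$, and since $\mathcal{V}^\perp$ is triangulated each $x_i$ (in particular $x_{d-1}$) stays in $\mathcal{V}^\perp$; unwinding the tower writes $x$ as a finite iterated extension of shifts of objects of $\overline{\mathcal{F}}\cap\mathcal{V}^\perp$, so $x\in\mathrm{thick}(\overline{\mathcal{F}}\cap\mathcal{V}^\perp)$.

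The main obstacle, I expect, lies entirely inside this last step: one must correctly invoke the standard fact that a $d$-cluster tilting subcategory $\overline{\mathcal{F}}$ of $\mathcal{D}^b(\mmod\Phi)$ yields $\mathcal{D}^b(\mmod\Phi)=\overline{\mathcal{F}}*\Sigma\overline{\mathcal{F}}*\cdots*\Sigma^{d-1}\overline{\mathcal{F}}$, so that the $\overline{\mathcal{F}}$-resolution terminates, and one must be careful to use $\overline{\mathcal{F}}$-\emph{covers} (minimal precovers, which exist because $\mathcal{D}^b(\mmod\Phi)$ is Krull--Schmidt and $\overline{\mathcal{F}}$ is functorially finite) at each stage, so that property (\ref{property2}) genuinely applies to the syzygies. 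Everything else is assembly of the lemmas already proved, with no further analysis of algebra morphisms needed beyond Proposition \ref{proposition_homoepi_to_univloc}.
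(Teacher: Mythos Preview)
Your proposal is correct and follows essentially the same strategy as the paper: build the homological epimorphism $\phi$ from the functorially finite wide subcategory $\mathcal{U}^\perp\cap\mathcal{F}$ via Propositions \ref{lemma_ab} and \ref{lemma_ac}, then identify $\mathcal{U}^\perp$ with $\phi_*(\mathcal{D}^b(\mmod\Gamma))$ by tower-of-triangles arguments exploiting the $d$-cluster tilting structure together with property (\ref{property2}). The only cosmetic difference is that you isolate a symmetric reconstruction statement $\mathcal{V}^\perp=\mathrm{thick}(\overline{\mathcal{F}}\cap\mathcal{V}^\perp)$ and apply it to both $\mathcal{U}$ and $\mathcal{U}'$, whereas the paper proves the two inclusions $\phi_*(\mathcal{D}^b(\mmod\Gamma))\subseteq\mathcal{U}^\perp$ and $\mathcal{U}^\perp\subseteq\phi_*(\mathcal{D}^b(\mmod\Gamma))$ directly, using a $\overline{\mathcal{G}}$-tower in $\mathcal{D}^b(\mmod\Gamma)$ for the first and an $\overline{\mathcal{F}}$-tower in $\mathcal{D}^b(\mmod\Phi)$ for the second.
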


\begin{proof}
Let $\mathcal{U}\subseteq\mathcal{D}^b(\mmod \Phi)$ be a wide subcategory satisfying properties (\ref{property1}) and (\ref{property2}) from Definition \ref{defn_higher_univ_loc}.
We prove that the universal localization of $(\Phi,\mathcal{F})$ with respect to $\mathcal{U}$ exists.
By Lemmas \ref{lemma_ffinite} and \ref{lemma_wide}, we have that $\mathcal{U}^\perp\cap\overline{\mathcal{F}}$ is functorially finite and wide in $\overline{\mathcal{F}}$. Then, by Proposition \ref{lemma_ab}, $\mathcal{U}^\perp\cap\mathcal{F}$ is functorially finite and wide in $\mathcal{F}$ and by Proposition \ref{lemma_ac}, there is a homological epimorphism of $d$-homological pairs
\begin{align*}
    (\Phi,\mathcal{F})\xrightarrow{\phi}(\Gamma, \mathcal{G}).
\end{align*}
 Moreover, by Proposition \ref{proposition_homoepi_to_univloc}, we have that $\phi$ is the universal localization of $(\Phi,\mathcal{F})$ with respect to ${}^\perp (\phi_*(\mathcal{D}^b(\mmod\Gamma)))$. It remains to show that $\mathcal{U}={}^\perp (\phi_*(\mathcal{D}^b(\mmod\Gamma)))$, or equivalently that $\mathcal{U}^\perp= \phi_*(\mathcal{D}^b(\mmod\Gamma))$.

The homological epimorphism $\phi$ is built as explained in Remark \ref{remark_construction6.3} with $\mathcal{W}=\mathcal{U}^\perp\cap\mathcal{F}$, and it has the property that $\phi_*(\mathcal{G})=\mathcal{U}^\perp\cap\mathcal{F}$.
Since $\phi_*:\mathcal{D}^b(\mmod\Gamma)\rightarrow\mathcal{D}^b(\mmod\Phi)$ is triangulated, $\phi_*(\overline{\mathcal{G}})$ is closed under direct summands and $\mathcal{U}^\perp$ is triangulated, we have that
\begin{align*}
    \phi_*(\overline{\mathcal{G}})=
    \phi_*(\text{add}(\Sigma^{d\mathbb{Z}}\mathcal{G}))=\text{add} (\Sigma^{d\mathbb{Z}}\phi_*(\mathcal{G}))=
    \text{add} (\Sigma^{d\mathbb{Z}}(\mathcal{U}^\perp\cap\mathcal{F}))=\mathcal{U}^\perp\cap\overline{\mathcal{F}}.
\end{align*}

We first prove that $\phi_*(\mathcal{D}^b(\mmod\Gamma))\subseteq \mathcal{U}^\perp$. Consider an arbitrary element $x$ in $\phi_*(\mathcal{D}^b(\mmod\Gamma))$, that is $x=\phi_*(y)$ for some $y\in \mathcal{D}^b(\mmod\Gamma)$.
Since $\overline{\mathcal{G}}\subseteq\mathcal{D}^b(\mmod\Gamma)$ is $d$-cluster tilting, we can build a tower of triangles in $\mathcal{D}^b(\mmod\Gamma)$ of the form
\begin{align*}
\begin{gathered}
\xymatrix@!0 @C=3em @R=3em{
& \overline{g}_{d-2}\ar[rd]\ar[rr] && \overline{g}_{d-3}\ar[rd]\ar[r]&&\cdots&\ar[r]& \overline{g}_{1}\ar[rr]\ar[rd]&&\overline{g}_{0} \ar[rd]
\\
\overline{g}_{d-1}\ar[ru]&& c_{d-2}\ar@{~>}[ll]\ar[ru]&& c_{d-3}\ar@{~>}[ll]&\cdots& c_2\ar[ru]&& c_1 \ar@{~>}[ll]\ar[ru]&& y,\ar@{~>}[ll]
}
\end{gathered}
\end{align*}
where $\overline{g}_i\in\overline{\mathcal{G}}$. Applying $\phi_*$ to this tower, we obtain the tower of triangles in $\mathcal{D}^b(\mmod\Phi)$
\begin{align*}
\begin{gathered}
\xymatrix@!0 @C=3em @R=3em{
& \phi_*(\overline{g}_{d-2})\ar[rd]\ar[rr] && \phi_*(\overline{g}_{d-3})\ar[rd]\ar[r]&&\cdots&\ar[r]& \phi_*(\overline{g}_{1})\ar[rr]\ar[rd]&&\phi_*(\overline{g}_{0}) \ar[rd]
\\
\phi_*(\overline{g}_{d-1})\ar[ru]&& \phi_*(c_{d-2})\ar@{~>}[ll]\ar[ru]&& \phi_*(c_{d-3})\ar@{~>}[ll]&\cdots& \phi_*(c_2)\ar[ru]&& \phi_*(c_1) \ar@{~>}[ll]\ar[ru]&& x.\ar@{~>}[ll]
}
\end{gathered}
\end{align*}
In particular, the leftmost triangle has the objects $\phi_*(c_{d-2})$ and $\phi_*(\overline{g}_{d-1})$ in
$\phi_*(\overline{\mathcal{G}})=\mathcal{U}^\perp\cap\overline{\mathcal{F}}$. Since $\mathcal{U}^\perp$ is triangulated, this implies that $\phi_*(c_{d-2})\in\mathcal{U}^\perp$. By induction on the triangles in the tower, we conclude that $x\in\mathcal{U}^\perp$ and so $\phi_*(\mathcal{D}^b(\mmod\Gamma))\subseteq \mathcal{U}^\perp$.

For the second inclusion, let $a\in\mathcal{U}^\perp$. Since $\overline{\mathcal{F}}\subseteq\mathcal{D}^b(\mmod\Phi)$ is $d$-cluster tilting, we can build a tower of triangles in $\mathcal{D}^b(\mmod\Phi)$ of the form
\begin{align*}
\begin{gathered}
\xymatrix@!0 @C=3em @R=3em{
& \overline{f}_{d-2}\ar[rd]\ar[rr] && \overline{f}_{d-3}\ar[rd]\ar[r]&&\cdots&\ar[r]& \overline{f}_{1}\ar[rr]\ar[rd]&&\overline{f}_{0} \ar[rd]
\\
\overline{f}_{d-1}\ar[ru]&& b_{d-2}\ar@{~>}[ll]\ar[ru]&& b_{d-3}\ar@{~>}[ll]&\cdots& b_2\ar[ru]&& b_1 \ar@{~>}[ll]\ar[ru]&& a,\ar@{~>}[ll]
}
\end{gathered}
\end{align*}
where $\overline{f}_i\in\overline{\mathcal{F}}$ and, letting $b_0:=a$, $\overline{f}_i\rightarrow b_i$ is an $\overline{\mathcal{F}}$-cover for each $0\leq i\leq d-2$. Recall that $\mathcal{U}$ has property (\ref{property2}) from Definition \ref{defn_higher_univ_loc} by assumption. Hence $\overline{f_0}\in\overline{\mathcal{F}}\cap \mathcal{U}^\perp$ and since $\mathcal{U}^\perp$ is triangulated, we have that $b_1\in\mathcal{U}^{\perp}$. Proceeding by induction, we conclude that $\overline{f}_i\in\overline{\mathcal{F}}\cap \mathcal{U}^\perp$ for each $0\leq i\leq d-2$. Recall that 
\begin{align*}
    \mathcal{U}^\perp\cap\overline{\mathcal{F}}=\phi_*(\overline{\mathcal{G}})\subseteq \phi_*(\mathcal{D}^b(\mmod\Gamma))
\end{align*}
and $\phi_*(\mathcal{D}^b(\mmod\Gamma))$ is a triangulated subcategory of $\mathcal{D}^b(\mmod\Phi)$. Then $\overline{f}_{d-1},\, \overline{f}_{d-2}\in\phi_*(\overline{\mathcal{G}})$ imply that $b_{d-2}\in \phi_*(\mathcal{D}^b(\mmod\Gamma))$. By induction on the triangles in the tower, we conclude that $a\in\phi_*(\mathcal{D}^b(\mmod\Gamma))$ and so $\mathcal{U}^\perp\subseteq \phi_*(\mathcal{D}^b(\mmod\Gamma))$.

Hence, specifying a wide subcategory $\mathcal{U}\subseteq\mathcal{D}^b(\mmod\Phi)$ with the required properties from Definition \ref{defn_higher_univ_loc} is enough to describe the universal localization $\phi: \Phi\rightarrow \Gamma$ of $(\Phi, \mathcal{F})$ with respect to $\mathcal{U}$ and $\phi$ coincides with the homological epimorphism of $d$-homological pairs
\begin{align*}
    (\Phi, \mathcal{F})\xrightarrow{\phi}(\Gamma, \mathcal{G})
\end{align*}
with $\mathcal{G}=\phi^!(\mathcal{U}^{\perp}\cap\mathcal{F})$. Hence the map $\Psi$ is also surjective
\end{proof}

\begin{remark}\label{prop_univ_loc_exists}
Note that the proof of Proposition \ref{prop_map_surjective} shows that the universal localization of $(\Phi, \mathcal{F})$ with respect to $\mathcal{U}$ exists and it coincides with the homological epimorphism of $d$-homological pairs described in Remark \ref{remark_construction6.3}  with $\mathcal{W}=\mathcal{U}^\perp\cap\mathcal{F}$.
\end{remark}

\section{The bijective correspondence}\label{section_proof}
Putting together the results proved throughout the paper, we prove our main theorem.
\begin{theorem}\label{thm}
In the situation of Setup \ref{setup}, there are bijections between  the following sets.
\begin{enumerate}[label=(\alph*)]
    \item\label{itema} Functorially finite wide subcategories of $\mathcal{F}$.
    \item\label{itemb} Functorially finite wide subcategories of $\overline{\mathcal{F}}$.
    \item\label{itemc} Equivalence classes of homological epimorphisms of $d$-homological pairs $(\Phi, \mathcal{F})\xrightarrow{\phi}(\Gamma, \mathcal{G})$.
    \item\label{itemd} Equivalence classes of universal localizations $\Phi\xrightarrow{\phi}\Gamma$ of $(\Phi, \mathcal{F})$.
\end{enumerate}
\end{theorem}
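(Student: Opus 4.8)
The plan is to splice together the bijections established in Sections \ref{section_abc} and \ref{section_univ_loc}, so that the proof is essentially bookkeeping. The bijection between \ref{itema} and \ref{itemb} is precisely Proposition \ref{lemma_ab}, which sends a functorially finite wide subcategory $\mathcal{W}$ of $\mathcal{F}$ to $\overline{\mathcal{W}}$. The bijection between \ref{itemc} and \ref{itema} is Proposition \ref{lemma_ac}, sending the equivalence class of a homological epimorphism of $d$-homological pairs $(\Phi,\mathcal{F})\xrightarrow{\phi}(\Gamma,\mathcal{G})$ to $\phi_*(\mathcal{G})$, with inverse the construction recalled in Remark \ref{remark_construction6.3}.

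It then remains to exhibit a bijection between \ref{itemc} and \ref{itemd}. First I would invoke Proposition \ref{proposition_homoepi_to_univloc}: this produces a well-defined injection $\Psi$ from \ref{itemc} to \ref{itemd} which sends a homological epimorphism of $d$-homological pairs $(\Phi,\mathcal{F})\xrightarrow{\phi}(\Gamma,\mathcal{G})$ to the algebra morphism $\phi$ itself, which by that proposition is the universal localization of $(\Phi,\mathcal{F})$ with respect to $\mathcal{U}={}^\perp(\phi_*(\mathcal{D}^b(\mmod\Gamma)))$. Then I would invoke Proposition \ref{prop_map_surjective}: given any wide subcategory $\mathcal{U}\subseteq\mathcal{D}^b(\mmod\Phi)$ satisfying properties (\ref{property1}) and (\ref{property2}), Lemmas \ref{lemma_ffinite} and \ref{lemma_wide} together with Proposition \ref{lemma_ab} show that $\mathcal{U}^\perp\cap\mathcal{F}$ is a functorially finite wide subcategory of $\mathcal{F}$, and the homological epimorphism obtained from it via Remark \ref{remark_construction6.3} is the universal localization of $(\Phi,\mathcal{F})$ with respect to $\mathcal{U}$; hence $\Psi$ is onto. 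Therefore $\Psi$ is a bijection between \ref{itemc} and \ref{itemd}.

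Composing the three bijections above yields a bijection between any two of the four sets, which is the claim. The only point that needs a little care is that all these constructions descend to equivalence classes and that the inverses are independent of the chosen representatives; this is immediate, since equivalent homological epimorphisms have the same underlying algebra morphism up to the relevant notion of equivalence, and since for a universal localization $\Phi\xrightarrow{\phi}\Gamma$ of $(\Phi,\mathcal{F})$ with respect to $\mathcal{U}$ both $\mathcal{U}^\perp=\phi_*(\mathcal{D}^b(\mmod\Gamma))$ and the $d$-cluster tilting subcategory $\mathcal{G}=\phi^!(\mathcal{U}^\perp\cap\mathcal{F})$ are determined by $\phi$. I do not expect any genuine obstacle at this stage: all of the substance — in particular the surjectivity of $\Psi$, which relies on the tower-of-triangles arguments and on properties (\ref{property1}) and (\ref{property2}) — has already been carried out in Proposition \ref{prop_map_surjective} and the lemmas preceding it.
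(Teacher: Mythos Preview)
Your proposal is correct and follows exactly the same route as the paper: the paper's proof simply cites Proposition~\ref{lemma_ab} for \ref{itema}$\leftrightarrow$\ref{itemb}, Proposition~\ref{lemma_ac} for \ref{itema}$\leftrightarrow$\ref{itemc}, and Propositions~\ref{proposition_homoepi_to_univloc} and~\ref{prop_map_surjective} for \ref{itemc}$\leftrightarrow$\ref{itemd}. Your additional remarks on equivalence classes and on how the surjectivity argument works are accurate elaborations but go beyond what the paper records at this point, since all the substance has indeed been pushed into the cited propositions.
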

\begin{proof}
The sets \ref{itema} and \ref{itemb} are in bijection by Proposition \ref{lemma_ab} and the sets \ref{itema} and \ref{itemc} are in bijection by Proposition \ref{lemma_ac}.
Finally, the sets \ref{itemc} and \ref{itemd} are in bijection by Propositions \ref{proposition_homoepi_to_univloc} and \ref{prop_map_surjective}.
\end{proof}

\section{Example}\label{section_example}
We illustrate Theorem \ref{thm} in an example coming from the class of examples introduced by Vaso, describing all the sets in bijection, see \cite[Section 4]{V} and \cite[Section 7]{HJV}. We fix $m=3$ and $d=l=2$ in Vaso's example, so that 
\begin{align*}
    \Phi=kA_3/(\rad_{kA_3})^2.
\end{align*}
In other words, $\Phi$ is the path algebra of the quiver with relation
\begin{align*}
\xymatrix{
3\ar[r]\ar@{--}@/_1pc/[rr]& 2\ar[r]& 1.
}
\end{align*}

The Auslander-Reiten quiver of $\mmod\Phi$ is 
\begin{align*}
    \xymatrix@C=1em @R=1em{
    & f_2\ar[rd] &&f_3\ar[rd] &\\
    f_1\ar[ru]&& s_2\ar[ru] && f_4,
    }
\end{align*}
where $f_1$ is projective, $f_2$, $f_3$ are projective-injective, $f_4$ is injective and $s_2$ is  simple. Moreover,
\begin{align*}
    \mathcal{F}=\text{add}\{ f_1,f_2,f_3,f_4 \}\subseteq \mmod\Phi
\end{align*}
is a $2$-cluster tilting subcategory and $\Phi$ has global dimension $2$, so Setup \ref{setup} is satisfied.

In Sections \ref{section_example_a}, \ref{section_example_b}, \ref{section_example_c}, \ref{section_example_d} we describe, respectively, the elements of the sets (a), (b), (c) and (d) from Theorem \ref{thm} in this example.

\subsection{Functorially finite wide subcategories of $\mathcal{F}$}\label{section_example_a}
These have been described by Herschend, J{\o}rgensen and Vaso in \cite[Section 7]{HJV}, so the elements of set \ref{itema} are:
\begin{align*}
    &\mathcal{W}_1=\text{add}\{ f_1\},\\ &\mathcal{W}_2=\text{add}\{ f_2\}, \\
    &\mathcal{W}_3=\text{add}\{ f_3\}, \\ &\mathcal{W}_4=\text{add}\{ f_4\} \\
    &\mathcal{W}_5=\text{add}\{ f_1, f_3\},\\ &\mathcal{W}_6=\text{add}\{ f_2, f_4\}, \\
    &\mathcal{W}_7=\mathcal{F}.
\end{align*}
Note that these are all semisimple subcategories apart from $\mathcal{W}_7$.

\subsection{Functorially finite wide subcategories of $\overline{\mathcal{F}}$}\label{section_example_b}
By Proposition \ref{lemma_ab}, the subcategories $\mathcal{W}_i$ are in bijection with the elements of \ref{itemb} via $\mathcal{W}_i\mapsto \overline{\mathcal{W}}_i:=\text{add}\{ \Sigma^{2\mathbb{Z}}\mathcal{W}_i\}$. That is, the elements of set \ref{itemb} are:
\begin{align*}
    &\overline{\mathcal{W}}_1=\text{add}\{\Sigma^{2\mathbb{Z}} f_1\},\\
    &\overline{\mathcal{W}}_2=\text{add}\{ \Sigma^{2\mathbb{Z}}f_2\}, \\
    &\overline{\mathcal{W}}_3=\text{add}\{\Sigma^{2\mathbb{Z}} f_3\}, \\
    &\overline{\mathcal{W}}_4=\text{add}\{\Sigma^{2\mathbb{Z}} f_4\} \\
    &\overline{\mathcal{W}}_5=\text{add}\{ \Sigma^{2\mathbb{Z}}f_1,\Sigma^{2\mathbb{Z}} f_3\},\\
    &\overline{\mathcal{W}}_6=\text{add}\{\Sigma^{2\mathbb{Z}} f_2, \Sigma^{2\mathbb{Z}}f_4\}, \\
    &\overline{\mathcal{W}}_7=\overline{\mathcal{F}}.
\end{align*}

\subsection{Homological epimorphisms of $2$-homological pairs from $(\Phi,\mathcal{F})$}\label{section_example_c}
The elements of \ref{itema} are also in bijection with the elements of \ref{itemc} via the construction described in Remark \ref{remark_construction6.3}.

For example, for $\mathcal{W}_1=\text{add}\{f_1\}$, we have that the strong $\mathcal{W}_1$-envelopes of the indecomposables in $\mathcal{F}$ are
\begin{align*}
    f_1\xrightarrow{id_{f_1} } f_1,\, f_2\rightarrow 0,\, f_3\rightarrow 0, \, f_4\rightarrow 0.
\end{align*}
Hence, by \cite[Lemma 4.3]{HJV}, we have that the left adjoint of the inclusion functor $\mathcal{W}_1\hookrightarrow \mathcal{F}$ is
\begin{align*}
    \iota_1^*:&\mathcal{F}\rightarrow \mathcal{W}_1\\
    &f_1\mapsto f_1,
    \\ &f_2,f_3,f_4\mapsto 0.
\end{align*}
Then, 
\begin{align*}
   s_1=\iota_1^*(\Phi_\Phi)=f_1 \text{ and }
   \Gamma_1=\End_\Phi (s_1)=\langle id_{f_1} \rangle.
\end{align*}

The homomorphism $\phi_1$ is given by the composition
\begin{align*}
\xymatrix@R=0.5em{
    \Phi\ar[r]^{\sim}& \End_\Phi(\Phi)\ar[r]^{\iota_1^*(-)}& \Gamma_1\\
    1_\Phi\ar@{|->}[r]&(f_1\oplus f_2 \oplus f_3\xrightarrow{id} f_1\oplus f_2 \oplus f_3)\ar@{|->}[r]&(f_1\xrightarrow{id}f_1).
    }
\end{align*}
Recall that $\Gamma_1$ is a right module over $\Phi$. By \cite[Theorem III.1.6]{ASS}, as a representation of the corresponding quiver, we have that $(\Gamma_1)_\Phi$ is
\begin{align*}
    0\longrightarrow 0\longrightarrow k,
\end{align*}
that is $(\Gamma_1)_\Phi\cong f_1$. Then
\begin{align*}
      \mathcal{G}_1= \phi_1^{!}(\mathcal{W}_1)=\Hom_{\Phi}(f_1,\mathcal{W}_1)=\text{add}(\Gamma_1)
\end{align*}
and we have the homological epimorphism of $2$-homological pairs
\begin{align*}
    \phi_1:(\Phi,\mathcal{F})\rightarrow (\Gamma_1, \mathcal{G}_1)
\end{align*}
with $(\phi_1)_*(\mathcal{G}_1)=\mathcal{W}_1$.

Similarly, we can compute the other six homological epimorphisms of $2$-homological pairs starting at $(\Phi,\mathcal{F})$, see Table \ref{table:table_1}.
\begin{table}[H]
\begin{center}
\begin{tabular}{ |c| c| c |c|}
 \hline
 $j$ & $\Gamma_j$ & $\mathcal{G}_j$\ &$\phi_j:(\Phi, \mathcal{F})\rightarrow (\Gamma_j,\mathcal{G}_j)$\\
 \hline & & &\\ 
 1 & $\End_\Phi(f_1)$ & $\text{add}(\Gamma_1)$ & $1_{\Phi}\mapsto id_{f_1}$\\[1em]
 2 & $\End_\Phi(f_2\oplus f_2)$ & $\text{add}(\Gamma_2)$ & $1_{\Phi}\mapsto id_{f_2\oplus f_2}$\\ [1em]
 3 & $\End_\Phi(f_3\oplus f_3)$ & $\text{add}(\Gamma_3)$ & $1_{\Phi}\mapsto id_{f_3\oplus f_3}$\\[1em]
 4 & $\End_\Phi(f_4)$ & $\text{add}(\Gamma_4)$ & $1_{\Phi}\mapsto id_{f_4}$\\[1em]
 5 & $\End_\Phi(f_1\oplus f_3\oplus f_3)$ & $\text{add}(\Gamma_5)$ & $1_{\Phi}\mapsto id_{f_1\oplus f_3\oplus f_3}$\\[1em]
 6 & $\End_\Phi(f_2\oplus f_2\oplus f_4)$ & $\text{add}(\Gamma_6)$ & $1_{\Phi}\mapsto id_{f_2\oplus f_2\oplus f_4}$\\[1em]
 7 & $\End_\Phi (\Phi_\Phi)\cong \Phi$ & $\mathcal{F}$ & $1_\Phi\mapsto id_{\Phi_\Phi}$\\
 \hline
\end{tabular}
\end{center}
\caption{The homological epimorphisms of $2$-homological pairs from $(\Phi,\mathcal{F})$.}
\label{table:table_1}
\end{table}

\begin{remark}
Note that for $j=2,\, 3,\, 5$ and $6$, the morphism $\phi_j$ is an example of a homological epimorphism of algebras that is not surjective.

\end{remark}

\subsection{Universal localizations of $(\Phi,\mathcal{F})$}\label{section_example_d}
The elements in \ref{itemc} are in bijection with the elements in \ref{itemd}. In fact, each homological epimorphism of $2$-homological pairs $\phi_j$, is also the universal localization of $(\Phi,\mathcal{F})$ with respect to
\begin{align*}
    \mathcal{U}_j:={}^{\perp}((\phi_j)_*(\mathcal{D}^b(\mmod \Gamma_j))).
\end{align*}
So it remains to compute $\mathcal{U}_j$ for each $1\leq j\leq 7$. In order to do so, we need to understand $\mathcal{D}^b(\mmod\Phi)$ first.

Consider the path algebra $kA_3$ of the quiver
\begin{align*}
\xymatrix{
3\ar[r]& 2\ar[r]& 1.
}
\end{align*}
Denoting the indecomposable modules in $\mmod kA_3$ by their radical series, the Auslander–Reiten quiver of $\mathcal{D}^b(\mmod kA_3)$ is


\begin{align*}
\xymatrix @!0{
&&&&\Sigma^{-1}{\begin{smallmatrix}3\end{smallmatrix}}\ar[rd]&& {\begin{smallmatrix}3\\2\\1\end{smallmatrix}}\ar[rd]&& \Sigma {\begin{smallmatrix}1\end{smallmatrix}}\ar[rd] &&\Sigma {\begin{smallmatrix}2\end{smallmatrix}}\ar[rd] && \Sigma {\begin{smallmatrix}3\end{smallmatrix}}\\
&\cdots&&\Sigma^{-1}{\begin{smallmatrix}3\\2\end{smallmatrix}}\ar[ru]\ar[rd]&& {\begin{smallmatrix}2\\1\end{smallmatrix}} \ar[ru]\ar[rd] && {\begin{smallmatrix}3\\2\end{smallmatrix}}\ar[rd]\ar[ru]&&\Sigma{\begin{smallmatrix}2\\1\end{smallmatrix}}\ar[ru]\ar[rd]&&\Sigma {\begin{smallmatrix}3\\2\end{smallmatrix}}\ar[rd]\ar[ru]&&\dots\\
&&\Sigma^{-1}{\begin{smallmatrix}3\\2\\1\end{smallmatrix}}\ar[ru]&&{\begin{smallmatrix}1\end{smallmatrix}}\ar[ru]&& {\begin{smallmatrix}2\end{smallmatrix}}\ar[ru] && {\begin{smallmatrix}3\end{smallmatrix}}\ar[ru]&&\Sigma {\begin{smallmatrix}3\\2\\1\end{smallmatrix}}\ar[ru]&& \Sigma^2{\begin{smallmatrix}1\end{smallmatrix}}
}
\end{align*}

Note that since $kA_3$ is hereditary, every indecomposable in $\mathcal{D}^b(\mmod kA_3)$ is a shift of one of the six indecomposables in $\mmod kA_3$.
Note that $t:={\begin{smallmatrix}1\end{smallmatrix}}\oplus {\begin{smallmatrix}3\\2\\1\end{smallmatrix}}\oplus {\begin{smallmatrix}3\end{smallmatrix}}$ is a tilting module in $\mmod kA_3$  in the sense of \cite[pp 118]{HD}. Let $\Lambda=\End_{kA3}(t)$, it is easy to check that $\Lambda\cong \Phi$.
Then, by \cite[Theorem 1.6 and its Corollary]{HD2}, there is a derived equivalence
\begin{align*}
    R\Hom_{kA_3}(t,-):\mathcal{D}^b(\mmod kA_3)\rightarrow \mathcal{D}^b(\mmod\Phi).
\end{align*}
Under this equivalence, we have 
\begin{align*}
    {\begin{smallmatrix}1\end{smallmatrix}}\mapsto f_1,\, {\begin{smallmatrix}3\\2\\1\end{smallmatrix}}\mapsto f_2, \, {\begin{smallmatrix}3\\2\end{smallmatrix}}\mapsto s_2,\, {\begin{smallmatrix}3\end{smallmatrix}}\mapsto f_3,\, \Sigma{\begin{smallmatrix}2\end{smallmatrix}}\mapsto f_4, \, \Sigma{\begin{smallmatrix}2\\ 1\end{smallmatrix}}\mapsto x,
\end{align*}
where $x$ is an indecomposable in $\mathcal{D}^b(\mmod\Phi)$ that is not isomorphic to a shift of a module in $\mmod\Phi$ and is concentrated in homological degrees 1 and 0. Note that such an indecomposable exists because $\Phi$ is not hereditary.
The Auslander-Reiten quiver of $\mathcal{D}^b(\mmod\Phi)$ is then

\begin{align*}
\xymatrix @!0{
&&&&\Sigma^{-1}f_3\ar[rd]&& f_2\ar[rd]&& \Sigma f_1\ar[rd] &&f_4\ar[rd] && \Sigma f_3\\
&\cdots&&\Sigma^{-1}s_2\ar[ru]\ar[rd]&& \Sigma^{-1 }x\ar[ru]\ar[rd] && s_2\ar[rd]\ar[ru]&&x\ar[ru]\ar[rd]&&\Sigma s_2\ar[rd]\ar[ru]&&\dots\\
&&\Sigma^{-1}f_2\ar[ru]&&f_1\ar[ru]&& \Sigma^{-1}f_4\ar[ru] && f_3\ar[ru]&&\Sigma f_2\ar[ru]&& \Sigma^2 f_1
}
\end{align*}

We can now compute the wide subcategories $\mathcal{U}_j$.

We have that 
\begin{align*}
    \mathcal{U}_1^\perp=(\phi_1)_*(\mathcal{D}^b(\mmod\Gamma_1))= \text{add}\{ \Sigma^\mathbb{Z} f_1 \}\subseteq \mathcal{D}^b(\mmod\Phi).
\end{align*}

The indecomposables in $\mathcal{D}^b(\mmod\Phi)$ that have zero morphisms to $f_1$ and all its shifts are
  $ \Sigma^{\mathbb{Z}} f_2, \, \Sigma^{\mathbb{Z}} x \text{ and } \Sigma^{\mathbb{Z}} f_3$. Hence $\phi_1$ is the universal localization of $(\Phi,\mathcal{F})$ with respect to
\begin{align*}
    \mathcal{U}_1=\text{add}(\Sigma^\mathbb{Z}\{ f_2,f_3,x \}).
\end{align*}

Similarly, for $2\leq j\leq 7$, we can compute the wide subcategory $\mathcal{U}_j$ associated to the universal localization $\phi_j$, see Table \ref{table:table_2}.

\begin{table}[H]
\begin{center}
\begin{tabular}{ |c| c| c |}
 \hline
 $j$ & $\mathcal{U}_j^\perp$ & $\mathcal{U}_j$ \\
  \hline & & \\ 
 1& $\text{add}\{ \Sigma^\mathbb{Z} f_1 \}$ & $\text{add}\{\Sigma^\mathbb{Z}\{ f_2,f_3,x \}\}$\\[1em]
 2& $\text{add}\{ \Sigma^\mathbb{Z} f_2 \}$ & $\text{add}\{ \Sigma^\mathbb{Z} \{ f_3,f_4,s_2 \} \}$\\[1em]
 3& $\text{add}\{ \Sigma^\mathbb{Z} f_3 \}$ & $\text{add}\{ \Sigma^\mathbb{Z} \{ f_1,f_4,x \} \}$\\[1em]
 4&$\text{add}\{ \Sigma^\mathbb{Z} f_4 \}$ & $\text{add}\{ \Sigma^\mathbb{Z} \{ f_1,f_2,s_2 \} \}$\\[1em]
 5& $\text{add}\{ \Sigma^\mathbb{Z} \{ f_1,f_3\} \}$ & $\text{add}\{ \Sigma^\mathbb{Z} x \}$\\[1em]
 6& $\text{add}\{ \Sigma^\mathbb{Z} \{ f_2,f_4\} \}$ & $\text{add}\{ \Sigma^\mathbb{Z} s_2 \}$\\[1em]
 7& $\mathcal{D}^b(\mmod\Phi)$ & $0$\\
 \hline
\end{tabular}
\end{center}
\caption{The wide subcategories $\mathcal{U}_j$.}
\label{table:table_2}
\end{table}

\end{document}